\DeclareMathOperator{\rank}{rank}
\DeclareMathOperator{\dif}{d}
\DeclareMathOperator{\Lie}{\mathcal{L}}
\newcommand{\Cal}{\mathcal{C}}
\newcommand{\Dcal}{\mathcal{D}}
\renewcommand{\H}{\mathscr{H}}
\newcommand{\V}{\mathscr{V}}
\newcommand{\J}{\mathcal{J}}
\newcommand{\isom}{\overset{\sim}{\setbox0=\hbox{$\longrightarrow$}\ht0=0.2\ht0\box0}}
\DeclareMathOperator{\Bh}{{\it B}^{\H}}
\def \a{\alpha}
\def \b{\beta}
\def \e{\eta}
\def \ep{\varepsilon}
\def \o{\omega}
\def \phi{\varphi}
\def \Phi{\varPhi}
\def \p{\pi}
\def \r{\rho}
\def \s{\sigma}
\def \C{\mathbb{C}\,}
\def\widecheckg{g^{\hspace*{-2.5pt}\vbox to 5pt{\hbox to
0pt{\LARGE$\check{}$}}}\hspace*{2pt}}
\def\widecheckl{\lambda^{\hspace*{-3.5pt}\vbox to 8pt{\hbox to
0pt{\LARGE$\check{}$}}}\hspace*{2pt}}
\begin{document}

\title{On holomorphic maps and generalized\\
complex geometry}
\author{Liviu Ornea and Radu~Pantilie}
\thanks{The authors gratefully acknowledge that this work was partially supported by CNCSIS (Rom\^ania),
through the Project no.\ 529/06.01.2009, PN II - IDEI code~1193/2008}
\email{\href{mailto:lornea@gta.math.unibuc.ro}{lornea@gta.math.unibuc.ro},
       \href{mailto:Radu.Pantilie@imar.ro}{Radu.Pantilie@imar.ro}}
\address{L.~Ornea, Universitatea din Bucure\c sti, Facultatea de Matematic\u a, Str.\ Academiei nr.\ 14,
70109, Bucure\c sti, Rom\^ania, \emph{also,}
Institutul de Matematic\u a ``Simion~Stoilow'' al Academiei Rom\^ane,
C.P. 1-764, 014700, Bucure\c sti, Rom\^ania}
\address{R.~Pantilie, Institutul de Matematic\u a ``Simion~Stoilow'' al Academiei Rom\^ane,
C.P. 1-764, 014700, Bucure\c sti, Rom\^ania}
\subjclass[2010]{53D18, 53C55}
\keywords{holomorphic maps, Generalized Complex Geometry}

\newtheorem{thm}{Theorem}[section]
\newtheorem{lem}[thm]{Lemma}
\newtheorem{cor}[thm]{Corollary}
\newtheorem{prop}[thm]{Proposition}

\theoremstyle{definition}

\newtheorem{defn}[thm]{Definition}
\newtheorem{rem}[thm]{Remark}
\newtheorem{exm}[thm]{Example}

\numberwithin{equation}{section}

\maketitle
\thispagestyle{empty}
\vspace{-0.5cm}
\section*{Abstract}
\begin{quote}
{\footnotesize  We introduce a natural notion of \emph{holomorphic map} between generalized complex manifolds
and we prove some related results on Dirac structures and generalized K\"ahler manifolds.}
\end{quote}

\section*{Introduction}

\indent
The \emph{generalized complex structures} \cite{Gua-thesis}\,,\,\cite{Hit-gc_QJM} contain, as particular cases,
the complex and symplectic structures. Although for the latter structures there exist well known definitions
which give the corresponding morphisms (holomorphic maps and Poisson morphisms, respectively), it still
lacks a suitable notion of \emph{holomorphic map} with respect to which the class of generalized complex manifolds to become a category.\\
\indent
In this paper we introduce such a notion (Definition \ref{defn:ogc}\,, below) based on the
following considerations. Firstly, holomorphic maps between generalized complex manifolds should be invariant under
\emph{$B$-field transformations}. This is imposed by the fact that the group of (orthogonal) automorphisms of the Courant bracket
(which defines the integrability in Generalized Complex Geometry) on a manifold
is the semidirect product of the group of diffeomorphisms and the additive group of closed two-forms
on the manifold \cite{Gua-thesis}\,.
Secondly, by \cite{Gua-thesis}\,, underlying any linear generalized complex structure there are:\\
\indent
\quad$\bullet$ a linear Poisson structure (that is, a constant Poisson structure on the vector space; see Section \ref{section:lDs}\,), and\\
\indent
\quad$\bullet$ a linear co-CR structure (that is, a linear CR structure on the dual vector space; see Section \ref{section:gclm}\,),\\
both of which are preserved under linear $B$-field transformations. Moreover, these two structures determine,
up to a (non-unique) linear $B$-field transformation, the given generalized linear complex structure.\\
\indent
A \emph{generalized complex linear map} is a co-CR linear Poisson morphism (Definition \ref{defn:gcl}\,).
It follows quickly that a linear map is generalized complex if and only if, up to linear
$B$-fields transformations, it is the product of a (classical) complex linear map, between complex vector spaces,
and a linear Poisson morphism, between symplectic vector spaces (Proposition \ref{prop:gcl}\,).\\
\indent
A \emph{holomorphic map} between generalized (almost) complex manifolds is a map whose differential is generalized complex
(Definition \ref{defn:ogc}\,). Then, essentially, all of the above mentioned (linear) facts hold, locally,
in the setting of generalized complex manifolds (Theorem \ref{thm:local_gcs} and Proposition \ref{prop:ogc_basic}\,).\\
\indent
The first examples are the classical holomorphic maps, the Poisson morphisms between symplectic manifolds
and their products (Example \ref{exm:ogc_first}\,).\\
\indent
Other large classes of natural examples can be obtained by working with compact or nilpotent Lie groups
(Examples \ref{exm:ogc_second} and \ref{exm:ogc_third}\,).\\
\indent
Further motivation for our notion of holomorphicity comes from generalized K\"ahler geometry.
For example, if $(g,b,J_+,J_-)$ is the bi-Hermitian structure
corresponding to a generalized K\"ahler manifold $(M,L_1,L_2)$ then the holomorphic functions of $(M,L_1)$ and $(M,L_2)$ are
the bi-holomorphic functions of $(M,J_+,J_-)$ and $(M,J_+,-J_-)$\,, respectively (Remark \ref{rem:assocF_holo-functions}\,).
Other natural properties of the holomorphic maps between generalized K\"ahler manifolds are obtained in
Sections \ref{section:gKm} and \ref{section:gK_tamed} (Remark \ref{rem:Riem_subm_descend}(2) and
Corollaries \ref{cor:holo_diffeo}\,, \ref{cor:Poisson_holo_map}\,).\\
\indent
Along the way, we obtain results on generalized K\"ahler manifolds, such as the factorisation result
Theorem \ref{thm:H+-int}\,; see, also, Corollaries \ref{cor:H+-}\,, \ref{cor:VH+} and \ref{cor:L2_normal}\,, the first of which
is a significant improvement of \cite[Theorem A]{ApoGua}\,.\\
\indent
The paper is organized as follows. In Section \ref{section:lDs}\,, after recalling \cite{Courant} some
basic facts on linear Dirac structures, we give explicit descriptions (Proposition \ref{prop:pfpb}\,)
for the pull-back and push-forward of a linear Dirac structure, which we then use to show that any linear Dirac structure
is, in a natural way, the pull-back of a linear Poisson structure (Corollary \ref{cor:pbP}\,; cf.\ \cite{BurRad}\,,\,\cite{BurWei-2005}\,),
which we call the \emph{canonical (linear) Poisson quotient} (cf.\ \cite{BurWei-2005}\,), of the given linear Dirac structure. The smooth version
(Theorem \ref{thm:pbP}\,; cf.\ \cite{Courant}\,,\,\cite{BurRad}\,,\,\cite{BurWei-2005}\,) of this result is proved in
Section \ref{section:Ds} together with some other results on Dirac structures. For example, there we show (Corollary \ref{cor:blank_up_to_B}\,)
that, locally, any regular Dirac structure is, up to a $B$-field transformation, of the form $\V\oplus{\rm Ann}(\V)$\,, where $\V$ is
(the tangent bundle of) a foliation.\\
\indent
In Section \ref{section:gclm}\,, we introduce the notion of generalized complex linear map, along the above mentioned
lines. It follows that two generalized linear complex structures $L_1$ and $L_2$\,, on a vector space $V$,
can be identified if and only if $L_2$ is the linear $B$-field transform of the push-forward of $L_1$\,,
through a linear isomorphism of $V$ (Corollary \ref{cor:gcl}\,). Also, we explain (Remark \ref{rem:nonBinv}\,)
why another definition of the notion of generalized complex linear map is, in our opinion, inadequate.\\
\indent
In Section \ref{section:h_gc}\,, we review some basic facts on generalized complex manifolds and we introduce
the corresponding notion of holomorphic map. It follows that if a real analytic map $\phi$\,,
between real analytic regular generalized complex manifolds, is holomorphic then, locally,
up to the complexification of a real analytic $B$-field tranformation, the complexification of $\phi$ descends
to a complex analytic Poisson morphism between canonical Poisson quotients (Proposition \ref{prop:holo_Dirac}\,).
Also, we show that the pseudo-horizontally conformal submersions with minimal two-dimensional fibres,
from Riemannian manifolds, provide natural constructions of generalized complex structures
(Example \ref{exm:gc_harmorphs}\,).\\
\indent
In Section \ref{section:gKm}\,, we prove (Theorem \ref{thm:H+geod}\,) that if $(g,b,J_+,J_-)$ is the bi-Hermitian structure
corresponding to a generalized K\"ahler structure and we denote $\H^{\pm}={\rm ker}(J_+\mp J_-)$ then the following conditions are equivalent:\\
\indent
\quad$\bullet$ $\H^{\pm}$ integrable;\\
\indent
\quad$\bullet$ $\H^{\pm}$ geodesic.\\
It follows that, under natural conditions, the holomorphic maps between generalized K\"ahler manifolds descend to holomorphic maps
between K\"ahler manifolds (Remark \ref{rem:Riem_subm_descend}\,). Also, we classify
the generalized K\"ahler manifolds $M$ for which $TM=\H^+\oplus\H^-$ (Corollary \ref{cor:H+-}\,).\\
\indent
In Section \ref{section:gK_tamed}\,, we describe, in terms of \emph{tamed symplectic manifolds} (see Definition \ref{defn:tamed_symp}\,)
the generalized K\"ahler manifolds for which either $\H_+$ or $\H_-$ is zero; the obtained result
(Theorem \ref{thm:gK_tamed}\,) also appears, in a different form, in \cite{Gua-Pbranes}\,.
Also, in Corollary \ref{cor:VH+}\,, we prove a factorisation result for generalized K\"ahler manifolds with $\H^+$
a holomorphic foliation, with respect to $J_+$ and $J_-$\,, and $\H^-=0$ 
(or $\H^+=0$ and $\H^-$ a holomorphic foliation, with respect to $J_+$ and $J_-$); see, also,
Corollary \ref{cor:L2_normal} for a similar result and Theorem \ref{thm:H+-int} for a generalization.\\
\indent
Furthermore, we explain how the associated holomorphic Poisson structures of \cite{Hit-gc_CMP} fit into our approach
(Theorem \ref{thm:holo_Poisson_tamed}\,, Remark \ref{rem:holo_Poisson_tamed}),
we deduce some consequences for holomorphic diffeomorphisms (Corollary \ref{cor:holo_diffeo}\,),
and we show that, under natural conditions, the holomorphic maps between generalized K\"ahler manifolds
are holomorphic Poisson morphisms (Corollary \ref{cor:Poisson_holo_map}\,).

\section{Linear Dirac structures} \label{section:lDs}

\indent
In this section we recall (\,\cite{Courant}\,; see \cite{BurRad}\,,\,\cite{BurWei-2005}\,,\,\cite{Gua-thesis}\,)
some basic facts on linear Dirac structures.\\
\indent
Let $V$ be a (real or complex, finite dimensional) vector space. The symmetric bilinear
form $<\cdot,\cdot>\,$ on $V\oplus V^*$ defined by $$<u+\a,v+\b>=\tfrac12\bigl(\a(v)+\b(u)\bigr)\;,$$
for any $u+\a\,,\,v+\b\in V\oplus V^*$, corresponds, up to the factor $\tfrac12$\,, to the canonical isomorphism
$V\oplus V^*\isom\bigl(V\oplus V^*\bigr)^*$ defined by $u+\a\longmapsto\a+u$\,, for any
$u+\a\in V\oplus V^*$\,. In particular, $<\cdot,\cdot>$ is nondegenerate and, if $V$ is real,
its index is $\dim V$\,. Thus, the dimension of the maximal isotropic subspaces of $V\oplus V^*$
(endowed with $<\cdot,\cdot>$) is equal to $\dim V$.

\begin{defn}[\,\cite{Courant}\,]
A \emph{linear Dirac structure} on $V$ is a maximal isotropic subspace of $V\oplus V^*$.
\end{defn}

\indent
If $b$ is a bilinear form on $V$ then we shall denote by the same letter the corresponding
linear map from $V$ to $V^*$; thus, $b(u)(v)=b(u,v)$\,, for any $u\,,v\in V$.\\
\indent
Let $E\subseteq V$ be a vector subspace and let $\ep\in\Lambda^2E^*$; denote
$$L(E,\ep)=\bigl\{\,u+\a\,\big|\,u\in E\,,\,\a|_E=\ep(u)\,\bigr\}\;.$$
From the fact that $\ep$ is skew-symmetric it follows easily that $L(E,\ep)$ is isotropic.
Also, $L(E,0)=E\oplus{\rm Ann}(E)$\,, where ${\rm Ann}(E)=\bigl\{\a\in V^*\big|\,\a|_E=0\,\bigr\}$\,.\\\\
\indent
We shall denote by $\p$ and $^{*\!}\p$ the projections from $V\oplus V^*$ onto $V$ and $V^*$, respectively.
Also, if $L\subseteq V\oplus V^*$ then $L^{\perp}$ denotes the `orthogonal complement' of $L$ with respect to $<\cdot,\cdot>$\,.

\begin{prop}[\,\cite{Courant}\,] \label{prop:L}
Let $L$ be an isotropic subspace of $V\oplus V^*$ and let $E=\p(L)$\,.\\
\indent
Then there exists a unique $\ep\in\Lambda^2E^*$ such that $L\subseteq L(E,\ep)$\,.
In particular, if $L$ is a linear Dirac structure then $L=L(E,\ep)$\,. Furthermore,
$V\cap L={\rm ker}\,\ep$ and $^{*\!}\p(L)={\rm Ann}(V\cap L)$\,.
\end{prop}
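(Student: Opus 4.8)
The plan is to build $\ep$ by hand from the isotropy of $L$\,. Given $u\,,v\in E=\p(L)$\,, I would choose $\a\,,\b\in V^*$ with $u+\a\,,\,v+\b\in L$ (possible since $E=\p(L)$) and set $\ep(u,v)=\a(v)$\,. The one point needing care is well-definedness: if $u+\a'\in L$ is another lift of $u$\,, then $0+(\a-\a')=(u+\a)-(u+\a')\in L$\,, so for every $v+\b\in L$ isotropy gives $\tfrac12(\a-\a')(v)=<0+(\a-\a')\,,\,v+\b>=0$\,; as $v$ runs over all of $E=\p(L)$ this forces $(\a-\a')|_E=0$\,, so $\a(v)$ depends only on $u$ and $v$\,. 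Bilinearity is then immediate from linearity of the lifts, and isotropy applied to $u+\a$ and $v+\b$ gives $\a(v)+\b(u)=0$\,, hence $\ep(u,v)=-\b(u)=-\ep(v,u)$\,, so $\ep\in\Lambda^2E^*$\,. By construction $\a|_E=\ep(u)$ for every $u+\a\in L$\,, i.e.\ $L\subseteq L(E,\ep)$\,; and if $L\subseteq L(E,\ep')$ for some $\ep'\in\Lambda^2E^*$\,, then choosing for each $u\in E$ a lift $u+\a\in L$ yields $\ep'(u)=\a|_E=\ep(u)$\,, whence $\ep'=\ep$\,. This settles existence and uniqueness.

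When $L$ is a linear Dirac structure it has dimension $\dim V$ (being maximal isotropic). On the other hand the map $L(E,\ep)\to E$\,, $u+\a\mapsto u$\,, is surjective (given $u\in E$\,, extend $\ep(u)\in E^*$ arbitrarily to $V^*$) with kernel ${\rm Ann}(E)$\,, so $\dim L(E,\ep)=\dim E+(\dim V-\dim E)=\dim V$\,; together with $L\subseteq L(E,\ep)$ this forces $L=L(E,\ep)$\,.

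For the last two identities I would use $L=L(E,\ep)$\,. An element $u+0$ of $V\oplus V^*$ lies in $L$ iff $u\in E$ and $\ep(u)=0$\,, so $V\cap L={\rm ker}\,\ep$\,, the kernel of $\ep$ viewed as a map $E\to E^*$\,. Next, $\a\in{}^{*\!}\p(L)$ iff $u+\a\in L(E,\ep)$ for some $u$\,, i.e.\ iff $\a|_E\in{\rm im}\,\ep$\,, whereas $\a\in{\rm Ann}(V\cap L)$ iff $\a|_E$ annihilates ${\rm ker}\,\ep$\,; hence it suffices to show that ${\rm im}\,\ep$ equals the annihilator of ${\rm ker}\,\ep$ in $E^*$\,. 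Skew-symmetry gives $\ep(u)(w)=-\ep(w)(u)=0$ for all $u\in E$ and $w\in{\rm ker}\,\ep$\,, so ${\rm im}\,\ep$ is contained in that annihilator; and both spaces have dimension $\dim E-\dim{\rm ker}\,\ep$\,, so they coincide, which finishes the argument.

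The only genuine obstacle is the well-definedness of $\ep$ in the first step: that is exactly where both hypotheses — that $L$ is isotropic and that $E$ is taken to be the full projection $\p(L)$ rather than a smaller subspace — are used. Everything after that is formal, resting only on the definitions and two dimension counts.
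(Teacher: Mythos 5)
Your proof is correct and complete. The paper states this proposition without proof, simply citing Courant, so there is no in-text argument to compare against; your construction of $\ep(u,v)=\a(v)$ from a lift $u+\a\in L$, with well-definedness and skew-symmetry both extracted from isotropy, followed by the two dimension counts (for $L=L(E,\ep)$ and for ${\rm im}\,\ep={\rm Ann}_{E^*}(\ker\ep)$), is exactly the standard argument and establishes every claim in the statement.
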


\indent
Let $L$ be a linear Dirac structure on $V$. If\/ $^{*\!}\p(L)=V^*$ then $L$ is called a
\emph{linear Poisson structure} (see \cite{Courant}\,). By Proposition \ref{prop:L}\,,
if $L$ is a linear Poisson structure then $L=L(V^*,\eta)$ for some \emph{bivector} $\eta\in\Lambda^2V$
(cf.\ \cite{Wei-local_P}\,).\\

\indent
Let $V$ and $W$ be vector spaces endowed with linear Dirac structures $L_V$ and $L_W$,
respectively, and let $f:V\to W$ be a linear map. Denote
\begin{equation*}
\begin{split}
f_*(L_V)&=\bigl\{f(X)+\e\,|\,X+f^*(\e)\in L_V\bigr\}\;,\\
f^*(L_W)&=\bigl\{X+f^*(\e)\,|\,f(X)+\e\in L_W\bigr\}\;.
\end{split}
\end{equation*}

\begin{prop} \label{prop:pfpb}
Let $f:V\to W$ be a linear map. Let $L(E,\ep)$ and $L(F,\e)$ be linear Dirac structures
on $V$ and $W$, respectively. Then
\begin{equation*}
\begin{split}
f_*\bigl(L(E,\ep)\bigr)&=L\bigl(f\bigl((E\cap\ker\!f)^{\perp_{\ep}}\bigr),\check{\ep}\,\bigr)\;,\\
f^*\bigl(L(F,\e)\bigr)&=L\bigl(f^{-1}(F),f^*(\e)\bigr)\;,
\end{split}
\end{equation*}
where $\check{\ep}$ is characterised by $f^*(\check{\ep})=\ep$ on
$(E\cap\ker\!f )^{\perp_{\ep}}$.
\end{prop}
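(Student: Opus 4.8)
The plan is to prove both identities by unwinding the definitions of $f_*$ and $f^*$ and matching the resulting subsets against the description $L(\cdot,\cdot)$. It is convenient to record first two elementary facts about a (possibly degenerate) skew form $\ep$ on a vector space $E$, with radical $R=\{x\in E:\ep(x,y)=0\text{ for all }y\in E\}$: for every subspace $U\subseteq E$ one has $R\subseteq U^{\perp_{\ep}}$ and $\bigl(U^{\perp_{\ep}}\bigr)^{\perp_{\ep}}=U+R$, the latter following from the dimension count $\dim U^{\perp_{\ep}}=\dim E-\dim U+\dim(U\cap R)$. Here, and below, $\perp_{\ep}$ always denotes the orthogonal complement taken inside $E$ with respect to $\ep$.

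I would deal with the pull-back first, as it is the more formal of the two. By definition, $X+f^*(\zeta)\in f^*\bigl(L(F,\e)\bigr)$ exactly when $f(X)\in F$ and $\zeta|_F=\e(f(X))$; the first of these says $X\in f^{-1}(F)$, and for any $Y\in f^{-1}(F)$ we get $f^*(\zeta)(Y)=\zeta(f(Y))=\e\bigl(f(X),f(Y)\bigr)=\bigl(f^*\e\bigr)(X)(Y)$, which already gives the inclusion ``$\subseteq$''. For the opposite inclusion, given $X+\b\in L\bigl(f^{-1}(F),f^*(\e)\bigr)$ one uses $\ker f\subseteq f^{-1}(F)$ together with the vanishing of $\bigl(f^*\e\bigr)(X)$ on $\ker f$ to see that $\b|_{\ker f}=0$; hence $\b$ factors through $f$, say $\b=f^*(\zeta_0)$ with $\zeta_0$ defined on the image $f(V)$. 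Since $\zeta_0$ and $\e(f(X))$ then agree on $F\cap f(V)$, they glue to some $\zeta\in W^*$ with $\zeta|_F=\e(f(X))$ and $f^*(\zeta)=\b$, which is exactly what is needed.

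For the push-forward, write $E'=(E\cap\ker f)^{\perp_{\ep}}$. The inclusion $f_*\bigl(L(E,\ep)\bigr)\subseteq L\bigl(f(E'),\check{\ep}\bigr)$ comes out of the definition directly: if $f(X)+\zeta\in f_*\bigl(L(E,\ep)\bigr)$ with $X+f^*(\zeta)\in L(E,\ep)$, then the identity $\zeta(f(Y))=\ep(X,Y)$, valid for all $Y\in E$, forces $X\in E'$ when tested on $Y\in E\cap\ker f$ and identifies $\zeta|_{f(E')}$ with $\check{\ep}(f(X))$ when tested on $Y\in E'$. One does have to check beforehand that the recipe $\check{\ep}\bigl(f(X),f(Y)\bigr)=\ep(X,Y)$ for $X,Y\in E'$ is unambiguous, but this is just the observation that $E'\cap\ker f\subseteq(E')^{\perp_{\ep}}$, immediate from $E'=(E\cap\ker f)^{\perp_{\ep}}$; skew-symmetry of $\check{\ep}$ is then inherited from $\ep$, so $\check{\ep}\in\Lambda^2\bigl(f(E')\bigr)^*$ and the right-hand side is genuinely of the asserted form.

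The reverse inclusion $L\bigl(f(E'),\check{\ep}\bigr)\subseteq f_*\bigl(L(E,\ep)\bigr)$ is the step I expect to be the crux, since, given $w+\zeta\in L\bigl(f(E'),\check{\ep}\bigr)$, one must manufacture from a covector $\zeta$ constrained only on $f(E')$ a genuine element $X\in E$ with $f(X)=w$ and $\ep(X,\cdot)|_E=f^*(\zeta)|_E$. The plan here is: put $\nu=f^*(\zeta)|_E$; because $R\subseteq E'$, the hypothesis $\zeta|_{f(E')}=\check{\ep}(w)$ forces $\nu|_R=0$, so the equation $\ep(X_1,\cdot)|_E=\nu$ is solvable, with solution set $X_1+R$. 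Writing $w=f(X_0)$ with $X_0\in E'$ and comparing $\ep(X_0,\cdot)$ with $\ep(X_1,\cdot)$ on $E'$ yields $X_0-X_1\in(E')^{\perp_{\ep}}=(E\cap\ker f)+R$, hence $w-f(X_1)\in f(R)$; a suitable correction of $X_1$ inside $X_1+R$ then produces the required $X$, finishing the proof. Thus the only non-formal ingredients are the two facts about degenerate skew forms stated at the outset, needed to handle the radical of $\ep|_E$ and the double orthogonal $(E')^{\perp_{\ep}}$.
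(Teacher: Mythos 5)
Your proof is correct and follows essentially the same route as the paper's: the crux in both is the reverse inclusion for the push-forward, handled by correcting a candidate preimage with an element invisible to $\ep$ (you correct by the radical of $\ep|_E$ after solving $\ep(X_1,\cdot)=\nu$, the paper corrects by the radical of $\ep|_{E\cap\ker\!f}$ — equivalent up to the double-orthogonal identity you record at the outset). The only organizational difference is that you verify both inclusions directly, whereas the paper proves only the $\supseteq$ inclusions and concludes by maximality of the right-hand sides among isotropic subspaces.
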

\begin{proof}
It is easy to prove that $f_*(L_V)$ and $f^*(L_W)$ are isotropic subspaces of
$W\oplus W^*$ and $V\oplus V^*$, respectively.\\
\indent
Next, we show that there exists a unique two-form $\check{\ep}$ on
$f\bigl((E\cap\ker\!f )^{\perp_{\ep}}\bigr)$ such that $f^*(\check{\ep})=\ep$ on
$(E\cap\ker\!f )^{\perp_{\ep}}$. For this, it is sufficient to prove that if
$X_1,X_2\in (E\cap\ker\!f )^{\perp_{\ep}}$ are such that $f(X_1)=f(X_2)$
then $\ep(X_1,Y)=\ep(X_2,Y)$\,, for any $Y\in(E\cap\ker\!f )^{\perp_{\ep}}$.
Now, if $X_1,X_2\in (E\cap\ker\!f )^{\perp_{\ep}}$, then $X_1,X_2\in E$
and, as $X_1-X_2\in\ker\!f $, we have $\ep(X_1-X_2,Y)=0$\,, for any
$Y\in(E\cap\ker\!f )^{\perp_{\ep}}$.\\
\indent
Thus, to complete the proof it is sufficient to show that
\begin{equation} \label{e:pfpb1}
\begin{split}
f_*\bigl(L(E,\ep)\bigr)&\supseteq L\bigl(f\bigl((E\cap\ker\!f )^{\perp_{\ep}}\bigr),\check{\ep}\,\bigr)\;,\\
f^*\bigl(L(F,\e)\bigr)&\supseteq L\bigl(f^{-1}(F),f^*(\e)\bigr)\;.
\end{split}
\end{equation}
\indent
Let $Y+\xi\in L\bigl(f\bigl((E\cap\ker\!f )^{\perp_{\ep}}\bigr),\check{\ep}\,\bigr)$\,;
equivalently, there exists $X\in(E\cap\ker\!f )^{\perp_{\ep}}$ such that $f(X)=Y$
and $\xi(f(X'))=\ep(f(X),f(X'))$\,, for any $X'\in(E\cap\ker\!f )^{\perp_{\ep}}$.\\
\indent
We claim that $Y+\xi\in f_*\bigl(L(E,\ep)\bigr)$\,; equivalently, there exists
$X\in(E\cap\ker\!f )^{\perp_{\ep}}$ such that $f(X)=Y$ and $\xi(f(X'))=\ep(X,X')$\,,
for any $X'\in E$\,.\\
\indent
It is easy to prove that, if $X\in(E\cap\ker\!f )^{\perp_{\ep}}$ is such that $f(X)=Y$,
then $\xi(f(X'))=\ep(X,X')$\,, for any
$X'\in(E\cap\ker\!f )\cup(E\cap\ker\!f )^{\perp_{\ep}}$\,.\\
\indent
It follows that, for any $X\in(E\cap\ker\!f )^{\perp_{\ep}}$ with $f(X)=Y$, there exists
$X_1\in\ker(\ep|_{E\cap\ker\!f })$ such that
$\xi(f(X'))=\ep(X+X_1,X')$\,, for any $X'\in E$\,; as, then, we also have
$X_1\in(E\cap\ker\!f )^{\perp_{\ep}}$ and $f(X_1)=0$,
this shows that $Y+\xi=f(X+X_1)+\xi\in f_*(L_V)$\,.\\
\indent
To prove the second relation of \eqref{e:pfpb1}\,, let
$X+\xi\in L\bigl(f^{-1}(F),f^*(\e)\bigr)$\,; equivalently, $f(X)\in F$ and
$\xi(X')=\e(f(X),f(X'))$ for any $X'\in f^{-1}(F)$\,. As $f^{-1}(F)\supseteq\ker\!f $,
there exists $\check{\xi}$ in the dual of $f(V)$ such that $\xi=f^*(\check{\xi})$\,.
Obviously, we can extend $\check{\xi}$ to an one-form on $W$, which we shall denote
by the same symbol $\check{\xi}$, such that $\check{\xi}(Y)=\e(f(X),Y)$\,,
for any $Y\in F$; equivalently, $f(X)+\check{\xi}\in L(F,\e)$\,. Therefore
$X+\xi=X+f^*(\check{\xi})\in f^*\bigl(L(F,\e)\bigr)$.\\
\indent
The proof is complete.
\end{proof}

\begin{defn}[see \cite{BurRad}\,,\,\cite{BurWei-2005}\,,\,\cite{Gua-thesis}\,]
Let $V$ and $W$ be vector spaces endowed with linear Dirac structures $L_V$ and $L_W$\,,
respectively, and let $f:V\to W$ be a linear map.\\
\indent
Then $f_*(L_V)$ and $f^*(L_W)$ are called the \emph{push forward \emph{and} pull back,
by $f$, of $L_V$ \emph{and} $L_W$}\,,
respectively.
\end{defn}

\indent
Note that, if $f:(V,L_V)\to(W,L_W)$ is a linear map between vector spaces endowed with linear
Poisson structures then the following assertions are equivalent (see \cite{BurRad}\,,\,\cite{BurWei-2005}\,):\\
\indent
(i) $f$ is a \emph{linear Poisson morphism} (that is, $f(\eta_{\,V})=\eta_{\,W}$, where $\eta_{\,V}$ and $\eta_{\,W}$
are the bivectors defining $L_V$ and $L_W$, respectively; see \cite{Vai-Poisson_book}\,).\\
\indent
(ii) $f_*(L_V)=L_W$.\\

\indent
From Proposition \ref{prop:pfpb}\,, we easily obtain the following result.

\begin{cor}[cf.\ \cite{BurRad}\,,\,\cite{BurWei-2005}\,] \label{cor:pbP}
Let $V$ be a vector space endowed with a linear Dirac structure $L=L(E,\ep)$\,.
Let $W=\ker\ep$ and denote by $\phi:V\to V/W$ the projection.\\
\indent
Then $L=\phi^*(\phi_*(L))$ and $\phi_*(L)$ is a linear Poisson structure on $V/W$.
\end{cor}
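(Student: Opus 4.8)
The plan is to apply Proposition~\ref{prop:pfpb} twice: first to compute $\phi_*(L)$ explicitly, and then to pull the result back along $\phi$. Everything rests on the description of $W$ supplied by Proposition~\ref{prop:L}, namely $W=V\cap L={\rm ker}\,\ep$; since $\ep\in\Lambda^2E^*$, its kernel lies in $E$, so $W\subseteq E$ and hence $E\cap\ker\phi=E\cap W=W$. Moreover, as $W={\rm ker}\,\ep$, every vector of $E$ is $\ep$-orthogonal to all of $W$, so $(E\cap\ker\phi)^{\perp_{\ep}}=W^{\perp_{\ep}}=E$; therefore $\phi\bigl((E\cap\ker\phi)^{\perp_{\ep}}\bigr)=\phi(E)=E/W$, viewed as a subspace of $V/W$ (using $W\subseteq E$).

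Substituting this into the push-forward formula of Proposition~\ref{prop:pfpb} gives
$$\phi_*(L)=L\bigl(E/W,\check{\ep}\,\bigr)\,,$$
where $\check{\ep}$ is the two-form on $E/W=E/{\rm ker}\,\ep$ determined by $\phi^*(\check{\ep})=\ep$ on $E$; in other words, $\check{\ep}$ is the two-form induced by $\ep$ on the quotient of $E$ by ${\rm ker}\,\ep$, and as such it is nondegenerate. To see that $\phi_*(L)$ is a linear Poisson structure on $V/W$, I would apply Proposition~\ref{prop:L} to the Dirac structure $L(E/W,\check{\ep})$ on $V/W$: it yields $^{*\!}\p\bigl(\phi_*(L)\bigr)={\rm Ann}\bigl((V/W)\cap\phi_*(L)\bigr)={\rm Ann}({\rm ker}\,\check{\ep})={\rm Ann}(0)=(V/W)^*$, which is exactly the defining condition of a linear Poisson structure.

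It remains to check that $L=\phi^*(\phi_*(L))$. Feeding $L(E/W,\check{\ep})$ into the pull-back formula of Proposition~\ref{prop:pfpb} gives
$$\phi^*\bigl(L(E/W,\check{\ep})\bigr)=L\bigl(\phi^{-1}(E/W),\phi^*(\check{\ep})\bigr)\,.$$
Since $W\subseteq E$ one has $\phi^{-1}(E/W)=E$, and by the defining property of $\check{\ep}$ one has $\phi^*(\check{\ep})=\ep$ on $E$; hence the right-hand side equals $L(E,\ep)=L$, as claimed. There is no genuine obstacle here: the argument is just bookkeeping with the two formulas of Proposition~\ref{prop:pfpb}, the only points needing a moment's care being the identifications $E\cap\ker\phi=W$ and $(E\cap\ker\phi)^{\perp_{\ep}}=E$ (both forced by $W={\rm ker}\,\ep$), together with the routine fact that any subspace of the form $L(F,\eta)$ is a linear Dirac structure — it is isotropic and has dimension equal to that of the ambient space — so that $\phi_*(L)=L(E/W,\check{\ep})$ is legitimately a Dirac structure on $V/W$ to which the pull-back formula applies.
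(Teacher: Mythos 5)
Your proof is correct and follows exactly the route the paper intends: the paper's own ``proof'' is simply the remark that the corollary follows easily from Proposition~\ref{prop:pfpb}, and your computation (identifying $E\cap\ker\phi=W$, $(E\cap\ker\phi)^{\perp_{\ep}}=E$, hence $\phi_*(L)=L(E/W,\check{\ep})$ with $\check{\ep}$ nondegenerate, then pulling back) is precisely the bookkeeping being left to the reader. No discrepancies.
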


\section{Dirac structures} \label{section:Ds}

\indent
In this section, we shall work in the smooth and (real or complex) analytic categories.
All the notations of Section \ref{section:lDs} will be applied to tangent bundles of manifolds and
to (differentials of) maps between manifolds.

\begin{defn}[\,\cite{Courant}\,]
An \emph{almost Dirac structure} on a manifold $M$ is a maximal isotropic subbundle of $TM\oplus T^*M$.\\
\indent
An almost Dirac structure is \emph{integrable} if it's space of sections is closed under the \emph{Courant bracket}
defined by
$$[X+\a,Y+\b]=[X,Y]+\tfrac12\dif\bigl(\iota_X\b-\iota_Y\a\bigr)+\iota_{X\!}\dif\!\b-\iota_{Y\!}\dif\!\a\,,$$
for any sections $X+\a$ and $Y+\b$ of $TM\oplus\Lambda(T^*M)$\,, where $\iota$ denotes the interior product.\\
\indent
A \emph{Dirac structure} is an integrable almost Dirac structure.
\end{defn}

\indent
Let $L$ be a Dirac structure on $M$. If\/ $\p(L)=TM$ then $L$ is a \emph{presymplectic structure} whilst if\/
$^{*\!}\p(L)=T^*M$ then $L$ is a \emph{Poisson structure} \cite{Courant} (cf.\ \cite{Wei-local_P}\,).\\
\indent
Recall \cite[\S4]{Courant} that a point of a manifold endowed with an almost Dirac structure $L$ is called \emph{regular}
if, in some open neighbourhood of it, $\p(L)$ and $^{*\!}\p(L)$ are bundles.\\
\indent
The following result follows from the fact that it is sufficient to be proved for maps of constant rank
between manifolds endowed with regular almost Dirac structures.

\begin{prop} \label{prop:funct_integr}
Let $M$ and $N$ be manifolds endowed with the almost Dirac structures $L_M$ and $L_N$, respectively. Let
$\phi:M\to N$ be a map which maps regular points of $L_M$ to regular points of $L_N$\,.\\
\indent
{\rm (i)} If $L_M$ is integrable and $\phi_*(L_M)=L_N$ then $L_N$ is integrable.\\
\indent
{\rm (ii)} If $L_N$ is integrable and $\phi^*(L_N)=L_M$ then $L_M$ is integrable.
\end{prop}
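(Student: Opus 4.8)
The plan is to reduce to the case of a map of constant rank between manifolds endowed with regular almost Dirac structures, as suggested by the remark immediately preceding the statement, and then to identify the push-forward (respectively pull-back) with a more tractable object whose integrability is transparent. For part (ii), first I would observe that since $\phi^*(L_N)=L_M$ is a bundle and $\phi$ sends regular points to regular points, near a regular point of $L_M$ both $L_M$ and $L_N$ are of locally constant rank, so that (shrinking $M$ and $N$) we may take $\phi$ of constant rank, hence locally a submersion followed by an immersion (or, after composing with suitable local diffeomorphisms, a linear projection followed by a linear inclusion in adapted coordinates). Since any Dirac structure is functorially the pull-back under these two elementary types of maps, it suffices to check the claim separately for $\phi$ an (open) embedding and for $\phi$ a surjective submersion with connected fibres.

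For the embedding case, $\phi^*(L_N)$ is just the restriction $i^*L_N$ of $L_N$ to the submanifold, and one checks that if $X+\alpha$, $Y+\beta$ are sections of $L_M = i^*L_N$ then, locally, they extend to sections $\widetilde X+\widetilde\alpha$, $\widetilde Y+\widetilde\beta$ of $L_N$; the Courant bracket of the extensions lies in $L_N$ by integrability of $L_N$, and a direct computation (using that $i^*$ commutes with $\mathrm{d}$ and with interior products of tangent vectors to $M$) shows that the restriction of $[\widetilde X+\widetilde\alpha,\widetilde Y+\widetilde\beta]$ to $M$ equals $[X+\alpha,Y+\beta]$ and lies in $L_M$. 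For the surjective submersion case, I would use Proposition~\ref{prop:pfpb} together with the local normal form of a submersion to write $L_M$, up to the obvious identifications, as the pull-back of $L_N$ along a projection $M=N\times F\to N$; sections of $L_M$ are then generated by $F$-independent lifts of sections of $L_N$ together with sections of the form $0+\mathrm{d}(\text{fibrewise function})$ \big(more precisely, sections of $\mathrm{Ann}(\mathscr V)\cap L_M$ where $\mathscr V$ is the vertical bundle\big), and the Courant bracket of any two such lies again in $L_M$ because the bracket of two lifted sections is the lift of their bracket, and the bracket involving a vertical one-form part only produces further such terms. Part (i) is handled dually: if $\phi_*(L_M)=L_N$ and $\phi$ is surjective, then near a regular point any section of $L_N$ is, locally, of the form $\phi_*(s)$ for a section $s$ of $L_M$ (here surjectivity and the constant-rank reduction are used to guarantee the existence of such local lifts), the Courant bracket is compatible with $\phi_*$ on related sections, and integrability of $L_M$ then forces $[\,\cdot\,,\,\cdot\,]$ of two sections of $L_N$ back into $L_N$.

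The main obstacle I expect is the bookkeeping in the constant-rank reduction: one must verify that "sends regular points to regular points" plus the hypothesis $\phi_*(L_M)=L_N$ (resp. $\phi^*(L_N)=L_M$) genuinely yields, on a neighbourhood of a regular point, a factorisation of $\phi$ through a submersion and an immersion under which the Dirac structures go to Dirac structures of the expected split form, and that the two elementary cases suffice to recover the general statement — i.e. that push-forward and pull-back are functorial along such factorisations, which is where Proposition~\ref{prop:pfpb} (and its evident smooth analogue) does the real work. Once the problem is localised in this way, the Courant-bracket verifications in each elementary case are routine, since the Courant bracket is built from the Lie bracket, $\mathrm{d}$, and interior products, all of which behave well under pull-back by immersions and under horizontal lifts along submersions. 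The regularity hypothesis is exactly what is needed so that the ranks of $\pi(L)$ and ${}^{*}\!\pi(L)$ do not jump, allowing the constant-rank theorem to be applied.
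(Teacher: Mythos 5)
Your proposal is correct and follows essentially the same route as the paper, whose entire proof is the one\-/sentence remark that it suffices to treat constant\-/rank maps between regular almost Dirac structures; you simply carry out in detail the factorisation through a submersion and an immersion and the resulting Courant\-/bracket verifications (via Proposition~\ref{prop:pfpb}) that the paper leaves implicit. The only point to keep in mind is that the constant\-/rank reduction covers a dense open set, so one should finish by invoking continuity (density of regular points and closedness of the integrability condition) to conclude on all of $M$, respectively $N$.
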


\indent
Next, we prove the following result.

\begin{thm}[cf.\ \cite{Courant}\,,\,\cite{BurRad}\,,\,\cite{BurWei-2005}\,] \label{thm:pbP}
Let $L$ be a Dirac structure on $M$ such that $^{*\!}\p(L)$ is a subbundle of $T^*M$. Then, locally, there exist
submersions $\phi$ on $M$ such that $\phi_*(L)$ is a Poisson structure and $L=\phi^*(\phi_*(L))$\,; moreover,
these submersions are (germ) unique, up to Poisson diffeomorphisms of their codomains.
\end{thm}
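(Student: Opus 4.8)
The plan is to reduce the global statement to the linear algebra already established in Corollary~\ref{cor:pbP}\,, by exploiting the regularity hypothesis to produce a local foliation. Since $^{*\!}\p(L)$ is a subbundle of $T^*M$, Proposition~\ref{prop:L} applied fibrewise shows that $L=L(E,\ep)$ with $E=\p(L)$ a subbundle of $TM$ and $\ep$ a smooth (resp.\ analytic) two-form on $E$; moreover $W:=\ker\ep=TM\cap L=\mathrm{Ann}\bigl(^{*\!}\p(L)\bigr)^{\circ}$ is a subbundle of $E$, of constant rank. The first key step is to show that $W$ is an integrable distribution on $M$. This should follow from the integrability of $L$: sections of $W$ are exactly sections $X$ of $TM$ with $X\in L$ (identifying $X$ with $X+0$), and the Courant bracket of two such sections is $[X,Y]+0$, which again lies in $L$ and in $TM$, hence in $W$; so $W$ is closed under the Lie bracket and, being a subbundle, is the tangent bundle of a foliation.

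Next I would take, locally, a submersion $\phi:M\to M'$ whose fibres are the leaves of $W$, so that $\ker\dif\!\phi=W$; this exists on a neighbourhood of any point by Frobenius. I then claim that $\phi_*(L)$ is a well-defined Poisson structure on $M'$ and that $L=\phi^*(\phi_*(L))$\,. Fibrewise, at a point $x\in M$ with image $x'=\phi(x)$, the differential $\dif_x\phi:T_xM\to T_{x'}M'$ has kernel $W_x=\ker\ep_x$, so Corollary~\ref{cor:pbP} (with $f=\dif_x\phi$, whose kernel is precisely $W_x$, noting $T_xM/W_x\cong T_{x'}M'$) gives that $(\dif_x\phi)_*(L_x)$ is a linear Poisson structure on $T_{x'}M'$ and $L_x=(\dif_x\phi)^*\bigl((\dif_x\phi)_*(L_x)\bigr)$\,. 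The pointwise formulas of Proposition~\ref{prop:pfpb} show these fibrewise push-forwards fit together into a smooth (analytic) subbundle $L':=\phi_*(L)$ of $TM'\oplus T^*M'$ — here one uses that $\phi$ is a submersion, so $\check{\ep}$ descends smoothly — and that $\phi^*(L')=L$ as bundles. Integrability of $L'$ then follows from Proposition~\ref{prop:funct_integr}(i), since $\phi$ is a surjective submersion with $\phi_*(L)=L'$ and $L$ integrable; being an integrable Dirac structure with $^{*\!}\p(L')=T^*M'$ (by construction, fibrewise), $L'$ is a Poisson structure.

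For the uniqueness statement, suppose $\psi:M\to M''$ is another local submersion with $\psi_*(L)$ Poisson and $L=\psi^*(\psi_*(L))$\,. From $L=\psi^*(\psi_*(L))$ one reads off, fibrewise via Proposition~\ref{prop:pfpb}, that $\ker\dif\!\psi\subseteq\p(L)=E$ and in fact $\ker\dif\!\psi\subseteq\ker\ep=W$ (because $\psi_*(L)$ being Poisson forces $^{*\!}\p(\psi_*(L))$ to be everything, which by the pull-back formula pins down $\ker\dif\!\psi$ as contained in the kernel of $\ep$); comparing ranks — both $M'$ and $M''$ have dimension $\dim M-\rank W$ since $\psi_*(L)$ and $\phi_*(L)$ are Poisson of the appropriate rank — gives $\ker\dif\!\psi=W=\ker\dif\!\phi$. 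Hence, shrinking the neighbourhood, $\psi$ factors as $\chi\circ\phi$ for a local diffeomorphism $\chi:M'\to M''$, and $\psi_*(L)=\chi_*(\phi_*(L))$ shows $\chi$ is a Poisson diffeomorphism.

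The main obstacle I expect is the smoothness (resp.\ analyticity) of $\phi_*(L)$ as a subbundle: the fibrewise description from Proposition~\ref{prop:pfpb} involves the quotient two-form $\check{\ep}$, and one must check that as $x$ varies these assemble into a smooth section rather than merely a pointwise family. This is where the submersion property of $\phi$ (and the constancy of $\rank W$, i.e.\ the regularity hypothesis packaged in "$^{*\!}\p(L)$ a subbundle") is essential: it lets one choose local coordinates adapted to the foliation in which $\phi$ is a coordinate projection, reducing the descent of $\ep$ to $\check{\ep}$ to the elementary statement that a differential form annihilating a coordinate distribution and invariant along it is the pull-back of a form downstairs. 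The integrability of $W$, although conceptually the first step, is comparatively routine given the Courant-bracket computation above.
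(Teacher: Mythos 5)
Your overall architecture is the same as the paper's (integrate the kernel distribution, quotient by it, apply Corollary~\ref{cor:pbP} fibrewise, get integrability of the quotient from Proposition~\ref{prop:funct_integr}), but the step you yourself flag as ``the main obstacle'' --- the consistency of the fibrewise push-forwards along the fibres of $\phi$ --- is exactly where your argument has a genuine gap. You claim the descent of the structure to $M'$ follows from choosing coordinates adapted to the foliation $W$, via ``the elementary statement that a differential form annihilating a coordinate distribution and invariant along it is the pull-back of a form downstairs.'' But the invariance along $W$ is precisely what has to be proved, and it does \emph{not} follow from regularity or from the submersion property: on $\R^3$ the two-form $f(z)\,\dif x\wedge\dif y$ with $f$ nowhere zero annihilates the integrable distribution $\langle\partial_z\rangle=\ker$, yet is not basic unless $f$ is constant. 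The invariance is a consequence of the integrability of $L$ under the Courant bracket, used beyond the (easy) integrability of $W=TM\cap L$. The paper supplies it as follows: writing $L=L(F,\eta)$ with $F={}^{*\!}\p(L)={\rm Ann}(TM\cap L)$ and $\eta$ a section of $\Lambda^2F^*$, one takes basic functions $f,g$ with $\dif\!f,\dif\!g$ sections of $F$, picks $X+\dif\!f$, $Y+\dif\!g$ sections of $L$, computes $[X+\dif\!f,Y+\dif\!g]=[X,Y]+\dif\bigl(\eta(\dif\!f,\dif\!g)\bigr)$, and concludes from closedness of $\Gamma(L)$ under the bracket that $\dif\bigl(\eta(\dif\!f,\dif\!g)\bigr)$ lies in $F={\rm Ann}(TM\cap L)$, i.e.\ that $\eta(\dif\!f,\dif\!g)$ is basic. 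This is the content that makes $\phi_*(L)$ well defined and smooth; your proposal never establishes it.

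A secondary but real error: your opening claim that the hypothesis ``${}^{*\!}\p(L)$ is a subbundle'' forces $E=\p(L)$ to be a subbundle with $\ep$ a smooth two-form on it is false. The hypothesis is equivalent to $TM\cap L$ having constant rank, but $\p(L)$ may still jump: for the Poisson structure $L\bigl(T^*\R^2,x\,\partial_x\wedge\partial_y\bigr)$ one has ${}^{*\!}\p(L)=T^*\R^2$ everywhere while $\p(L)$ drops rank on $\{x=0\}$. Since non-regular Poisson structures are among the intended instances of the theorem (where $\phi={\rm id}$), you cannot base the argument on the presentation $L=L(E,\ep)$ with $E$ a bundle; this is why the paper works throughout with the dual presentation $L=L(F,\eta)$, $F={}^{*\!}\p(L)$, which is a subbundle by hypothesis. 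Your uniqueness argument is fine in outline and is not spelled out in the paper either.
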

\begin{proof}
By hypothesis, $TM\cap L$ is a subbundle of $TM$. Furthermore, as $L$ is integrable, $TM\cap L$ is
(the tangent bundle to) a foliation.\\
\indent
Let $F={}^{*\!}\p(L)$ and let $\eta$ be the section of $\Lambda^2F^*$ such that $L=L(F,\eta)$\,. Note that,
$F\bigl(={\rm Ann}(TM\cap L)\bigr)$ is locally spanned by the differentials of functions which are basic
with respect to $TM\cap L$\,.\\
\indent
Let $f$ and $g$ be functions, locally defined on $M$, such that $\dif\!f$ and $\dif\!g$ are sections of $F$.
Then there exists vector fields $X$ and $Y$, locally defined on $M$, such that $X+\dif\!f$ and $Y+\dif\!g$
are local sections of $L$\,; in particular, we have $\eta(\dif\!f,\dif\!g)=X(g)=-Y(f)$\,.
Hence $[X+\dif\!f,Y+\dif\!g]=[X,Y]+\dif\bigl(\eta(\dif\!f,\dif\!g)\bigr)$ and we deduce that $\eta(\dif\!f,\dif\!g)$
is basic with respect to $TM\cap L$\,.\\
\indent
The proof follows quickly from Corollary \ref{cor:pbP} and Proposition \ref{prop:funct_integr}\,.
\end{proof}

\indent
Under the same hypotheses, as in Theorem \ref{thm:pbP}\,, we call $\phi_*(L)$ the \emph{canonical (local) Poisson quotient}
of $L$\,.\\
\indent
Next, we prove the following (cf.\ \cite[Proposition 4.1.2]{Courant}\,).

\begin{prop} \label{prop:pf_to_presym}
Let $L=L(E,\ep)$ be a Dirac structure on $M$ and let $x\in M$ be a regular point of $L$; denote by $P$ the leaf of $E$ through $x$.\\
\indent
Then for any submanifold $Q$ of $M$ transversal to $E$, such that $x\in Q$ and $\dim Q=\dim M-\dim P$, there exists a submersion $\r$ from
some open neighbourhood $U$\! of $x$ in $M$ onto some open neighbourhood $V$\! of $x$ in $P$ such that $\r_*(L|_U)=L(TV,\ep|_V)$
and the fibre of $\r$ through $x$ is an open set of $Q$\,.
\end{prop}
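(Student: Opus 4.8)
The plan is to reduce the problem to a local normal form for the Dirac structure $L=L(E,\ep)$ near the regular point $x$, then build the submersion $\r$ by combining a foliation chart for $E$ with the transversal $Q$. First I would use regularity: near $x$, the subbundle $E=\p(L)$ has constant rank, and since $L$ is integrable, $E$ is (the tangent bundle of) a foliation whose leaf through $x$ is $P$. Choose a foliated chart $U$ around $x$ in which the leaves of $E$ are the plaques; shrinking if necessary, we may assume $Q\cap U$ is a single transversal slice meeting each plaque exactly once, and that $\dim Q=\dim M-\dim P$ forces $Q$ to be transversal of complementary dimension, so $U\cong V\times W$ with $V$ an open neighbourhood of $x$ in $P$ (identified with a plaque) and $W$ an open neighbourhood of $x$ in $Q$. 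Define $\r:U\to V$ to be the projection; its fibre through $x$ is then (an open set of) $Q$ by construction, and $\r$ is visibly a submersion.

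The substance is to check that $\r_*(L|_U)=L(TV,\ep|_V)$. Here I would invoke Proposition \ref{prop:pfpb} applied fibrewise (it is a pointwise statement about linear Dirac structures and their push-forwards under the differential of $\r$). At a point of $U$, $\ker(\dif\!\r)=E^{\mathrm{vert}}$ is the tangent space to the plaque, which is exactly $E$ itself (since $E=\p(L)$ is the foliation tangent bundle and the plaques are the leaves); hence $E\cap\ker(\dif\!\r)=E$, and so $(E\cap\ker\!\dif\r)^{\perp_{\ep}}=\ker(\ep)$ at that point. Then $\dif\r\bigl((E\cap\ker\!\dif\r)^{\perp_\ep}\bigr)=\dif\r(\ker\ep)$; one must verify this equals $TV$ along $V$, which follows because along the leaf $P=V$ through $x$ the restriction $\ep|_V$ need not be the issue — rather, regularity of $L$ makes $\ker\ep$ a subbundle complementary (within $E$) to the vertical directions not killed by $\ep$, and $\dif\r$ restricted to $V$ is the identity. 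The two-form $\check\ep$ produced by Proposition \ref{prop:pfpb} is characterised by $\r^*(\check\ep)=\ep$ on $\ker\ep$ — but that is the trivial identity $0=0$ — so the correct reading is that $\check\ep$ is determined on $\dif\r(E^{\perp_\ep}\cap E)$, and unwinding the characterisation identifies it with $\ep|_V$ under the identification $V\cong P$. I would present this by a direct containment argument in both directions, as in the proof of Proposition \ref{prop:pfpb}, rather than by plugging blindly into the formula.

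The main obstacle I anticipate is the bookkeeping around which transversal directions $\ep$ detects: one must be careful that $L(TV,\ep|_V)$ genuinely has $\ep|_V$ as its two-form and not some degenerate restriction, and that the push-forward does not accidentally enlarge the first component beyond $TV$. This is exactly where Proposition \ref{prop:L} is useful: it tells us $V\cap L=\ker\ep$ and $^{*\!}\p(L)=\mathrm{Ann}(V\cap L)$, so comparing annihilators on both sides pins down the push-forward uniquely. A secondary, purely technical point is the simultaneous choice of foliated chart adapted to $Q$: this is a standard consequence of the transversality hypothesis $\dim Q=\dim M-\dim P$ together with $x\in Q$, proved by the implicit function theorem (flow out $Q$ along a frame for $E$). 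Integrability of $L$ is not needed beyond the statement that $E$ is a foliation — the identity $\r_*(L)=L(TV,\ep|_V)$ is a pointwise linear fact once the chart is fixed — though one could alternatively deduce integrability of the target from Proposition \ref{prop:funct_integr}(i) as a sanity check. I would close by remarking that the fibre of $\r$ through $x$ is $\{x\}\times W\subseteq Q$, which is the required open subset of $Q$.
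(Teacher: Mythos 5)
There is a genuine gap, and it sits exactly where the proposition has its content. You take an arbitrary foliated chart $U\cong V\times W$ adapted to $Q$, let $\r$ be the projection onto $V$, and assert that $\r_*(L|_U)=L(TV,\ep|_V)$ is ``a pointwise linear fact once the chart is fixed'', with integrability of $L$ not needed beyond involutivity of $E$. This is false. For $\r_*(L|_U)$ to be a well-defined almost Dirac structure on $V$ at all, the linear push-forwards $\bigl(\dif\!\r_{(v,w)}\bigr)_*\bigl(L_{(v,w)}\bigr)$ must agree for all $w$ in the fibre over $v$; concretely, $\ep$ restricted to the plaques must be projectable along the fibres of $\r$. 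An arbitrary foliated chart does not have this property. Take $M=\R^3$ with the Poisson structure $f(z)\,\partial_x\wedge\partial_y$, $f>0$ nonconstant: here $E$ is spanned by $\partial_x,\partial_y$, the leaves are the planes $z=\mathrm{const}$, $\ep$ on the leaf $z=c$ is $f(c)^{-1}\dif\!x\wedge\dif\!y$, and the naive projection $(x,y,z)\mapsto(x,y,0)$ pushes $L_{(x,y,z)}$ to a structure on $T_{(x,y,0)}P$ that depends on $z$, so its push-forward is not even defined. The submersion $\r$ must be built so that $\ep$ becomes constant along its fibres, and this is precisely what requires the integrability of $L$ and, in effect, Weinstein's splitting theorem. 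The paper's proof does exactly this: it first reduces to the case where $L$ is Poisson (via Theorem \ref{thm:pbP}), then invokes \cite[Corollary 2.3]{Wei-local_P} together with Proposition \ref{prop:pfpb}, and finally observes that in the inductive construction of \cite[Theorem 2.1]{Wei-local_P} the two functions chosen at each step may be taken constant along $Q$, which is what pins the fibre through $x$ to be an open subset of $Q$. Your ``flow out $Q$ along a frame for $E$'' produces a foliated chart, but not one adapted to $\ep$.

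There is also a concrete computational slip in your application of Proposition \ref{prop:pfpb}. With $\r:U\cong V\times W\to V$ the projection onto the leaf direction, the plaques $V\times\{w\}$ map diffeomorphically onto $V$, so $\ker\dif\!\r$ is the transversal ($Q$-)direction and $E\cap\ker\dif\!\r=0$, not $E$. Your computation takes $E\cap\ker\dif\!\r=E$, which would be correct for the other projection (onto $W$), and it leads to the absurdity $\dif\!\r\bigl((E\cap\ker\dif\!\r)^{\perp_{\ep}}\bigr)=\dif\!\r(\ker\ep)\subseteq\dif\!\r(\ker\dif\!\r)=0$ rather than $TV$. With the correct identification $E\cap\ker\dif\!\r=0$ one gets $(E\cap\ker\dif\!\r)^{\perp_{\ep}}=E$ and $\dif\!\r(E)=TV$ pointwise, which is fine, but it returns you to the projectability issue above: the two-form $\check{\ep}$ of Proposition \ref{prop:pfpb} is computed at each point of the fibre separately and must be shown independent of that point before it can be identified with $\ep|_V$.
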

\begin{proof}
From Theorem \ref{thm:pbP} it follows that we may assume $L$ a Poisson structure.\\
\indent
If we ignore the fact that the fibre of $\r$ through $x$ is fixed then the proposition is a consequence of
\cite[Corollary 2.3]{Wei-local_P} and Proposition \ref{prop:pfpb}\,.
To complete the proof just note that in the proof of \cite[Theorem 2.1]{Wei-local_P} (and, consequently, of
\cite[Corollary 2.3]{Wei-local_P}\,, as well), at each step, the two functions involved may be assumed
constant along $Q$.
\end{proof}

\indent
Recall (see \cite{Gua-thesis}\,,\,\cite{BurRad}\,) that any closed two-form $B$ on $M$ corresponds to a \emph{$B$-field transformation}
which is the automorphism of $TM\oplus T^*M$, preserving the Courant bracket, defined by
$${\rm exp}(B)(X+\a)=X+B(X)+\a\;$$
for any $X+\a\in TM\oplus T^*M$, where, as before, we have identified $B$ with the corresponding section of ${\rm Hom}(TM,T^*M)$\,.
It is easy to prove that if $L=L(E,\ep)$ is an almost Dirac structure on $M$ then ${\rm exp}(B)(L)=L(E,\ep+B|_E)$\,.

\begin{cor} \label{cor:blank_up_to_B}
Let $L$ be a regular Dirac structure on $M$; denote $E=\p(L)$\,. Then, locally, there exist two-forms $B$ on $M$
such that ${\rm exp}(B)(L)=E\oplus{\rm Ann}E$.
\end{cor}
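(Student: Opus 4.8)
The plan is to reduce the statement to a local normal form for the presymplectic form $\ep$ on the foliation $E$, and then produce the $B$-field by pulling back a suitable two-form. First I would invoke Corollary \ref{cor:blank_up_to_B}'s hypotheses: since $L=L(E,\ep)$ is regular, $E=\p(L)$ is a subbundle of $TM$, and because $L$ is integrable, $E$ is (the tangent bundle of) a foliation and $\ep\in\Lambda^2E^*$ is a leafwise closed two-form — indeed integrability of $L$ forces $\dif^E\ep=0$, where $\dif^E$ is the leafwise exterior derivative. The goal is to find a global (on a neighbourhood) closed two-form $B$ on $M$ with $B|_E=-\ep$; then ${\rm exp}(B)(L)=L(E,\ep+B|_E)=L(E,0)=E\oplus{\rm Ann}(E)$ by the identity noted just before the corollary.

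Next I would construct $B$. Working in a foliated chart where $E$ is spanned by $\partial/\partial x^1,\dots,\partial/\partial x^k$ and the leaves are the slices $\{y=\text{const}\}$, the form $\ep$ is a leafwise-closed two-form, i.e.\ for each fixed $y$ it is a closed two-form in the $x$-variables. By the Poincar\'e lemma applied fibrewise (with smooth/analytic dependence on the transverse parameter $y$), there is a one-form $\theta=\sum\theta_i(x,y)\,\dif\!x^i$ on the chart with $\dif^E\theta=\ep$; concretely $\theta$ is given by the standard homotopy formula integrating along the $x$-directions, which preserves smoothness and real/complex analyticity. Then set $B=-\dif\theta$ (the full exterior derivative on $M$). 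This $B$ is closed by construction, so it defines a genuine $B$-field transformation, and $B|_E=-(\dif\theta)|_E=-\dif^E\theta=-\ep$, which is exactly what is needed. Hence ${\rm exp}(B)(L)=E\oplus{\rm Ann}(E)$ locally.

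The main obstacle is ensuring that $\ep$ really is leafwise closed — that is, extracting from integrability of $L$ the precise statement $\dif^E\ep=0$ — and then handling the parametrised Poincar\'e lemma cleanly in the analytic category as well as the smooth one. For the first point, one takes local sections $X+\a$, $Y+\b$ of $L$ with $X,Y$ leafwise vector fields and $\a|_E=\ep(X)$, $\b|_E=\ep(Y)$; computing the Courant bracket and using that $[X+\a,Y+\b]$ again lies in $L$, hence its $T^*M$-part restricted to $E$ equals $\ep$ applied to its $TM$-part, yields after a short manipulation the leafwise Cartan formula $\dif^E\ep(X,Y,Z)=0$ for leafwise $X,Y,Z$. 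For the second point, one simply notes that the homotopy operator in the Poincar\'e lemma is given by integration of the coefficient functions, which is a smooth (resp.\ analytic) operation depending smoothly (resp.\ analytically) on the transverse coordinates, so the construction stays within the category in which we are working. Everything else is the routine verification, already recorded in the excerpt, that ${\rm exp}(B)(L(E,\ep))=L(E,\ep+B|_E)$.
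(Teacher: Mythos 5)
Your argument is correct, but it takes a genuinely different route from the paper. The paper deduces the corollary in two lines from Proposition \ref{prop:pf_to_presym}: locally there is a submersion $\r:M\to P$ onto a presymplectic manifold $\bigl(P,L(TP,\o)\bigr)$ with $\r_*(L)=L(TP,\o)$, and then $B=-\r^*(\o)$ does the job, with closedness of $B$ coming for free from closedness of the leafwise presymplectic form $\o$ (itself a consequence of Proposition \ref{prop:funct_integr}). That argument leans on the machinery already built up in Section \ref{section:Ds} --- Theorem \ref{thm:pbP} and, through Proposition \ref{prop:pf_to_presym}, Weinstein's local splitting theorem. You instead work directly in a Frobenius chart for the foliation $E$: you extract the leafwise closedness $\dif^E\ep=0$ from the Courant bracket (which is the same fact the paper uses implicitly, namely that the leaves carry honest presymplectic structures), apply the parametrised Poincar\'e lemma to get a leafwise primitive $\theta$, and set $B=-\dif\theta$. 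This is more elementary and self-contained --- it avoids the Weinstein splitting entirely and even produces an \emph{exact} $B$ rather than merely a closed one --- at the cost of having to verify $\dif^E\ep=0$ by hand and to check that $(\dif\theta)|_E=\dif^E\theta$ for your particular extension of $\theta$ (which does hold, since you take $\theta$ with only leafwise components in the foliated chart). Both proofs are valid; the paper's is shorter given its earlier results, yours is independent of them.
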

\begin{proof}
By Proposition \ref{prop:pf_to_presym}\,, locally, there exist submersions $\r:M\to P$ onto presymplectic manifolds
$\bigl(P,L(TP,\o)\bigr)$ such that $\r_*(L)=L(TP,\o)$\,.\\
\indent
Then $B=-\r^*(\o)$ is as required.
\end{proof}

\indent
We end this section with the following result which will be used later on.

\begin{prop} \label{prop:Poisson_OK}
Let $\phi:(M,L_M)\to(N,L_N)$ be a Poisson morphism, of constant rank, between regular Poisson manifolds
such that $\dif\!\phi(E_M)\subseteq E_N$\,, where $E_M$ and $E_N$ are the (symplectic) foliations determined by $L_M$
and $L_N$\,, respectively.\\
\indent
Then, locally, there exist submersions $\r:M\to(P,\o)$ and $\s:N\to(Q,\e)$ onto symplectic manifolds,
and a Poisson morphism $\psi:(P,\o)\to(Q,\e)$ such that:\\
\indent
\quad{\rm (i)} $TM=E_M\oplus{\rm ker}\dif\!\r$ and $\r_*(L_M)=L(TP,\o)$\,;\\
\indent
\quad{\rm (ii)} $TN=E_N\oplus{\rm ker}\dif\!\s$ and $\s_*(L_N)=L(TQ,\e)$\,;\\
\indent
\quad{\rm (iii)} $\s\circ\phi=\psi\circ\r$\,.
\end{prop}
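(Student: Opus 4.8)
The plan is to produce $\s$ first and then to tailor $\r$ so that $\s\circ\phi$ descends through it. Fix $x\in M$ and set $y=\phi(x)$. Applying Proposition~\ref{prop:pf_to_presym} to $L_N$ at $y$, with $Q$ the leaf of $E_N$ through $y$ and any transversal to $E_N$ at $y$ of complementary dimension, we obtain on a neighbourhood of $y$ a submersion $\s$ onto $(Q,\e)$, where $\e$ is the (nondegenerate) leafwise symplectic form, with $\s_*(L_N)=L(TQ,\e)$; since $N$ is regular and $\dim Q=\rank E_N$, transversality of $\ker\dif\!\s$ and $E_N$ at $y$ propagates to a neighbourhood, whence $TN=E_N\oplus\ker\dif\!\s$. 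This settles (ii), and $\s$ is a Poisson morphism onto the symplectic manifold $(Q,\e)$.

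The next step is to set $g=\s\circ\phi$ and to establish the \emph{key fact} that $g$ is a submersion with $\dif\!g(E_M)=TQ$ at every point. By functoriality of the push-forward and $\phi_*(L_M)=L_N$ we have $g_*(L_M)=\s_*\bigl(\phi_*(L_M)\bigr)=\s_*(L_N)=L(TQ,\e)$, so $g$ is a Poisson morphism onto the symplectic $(Q,\e)$. Writing $\eta_{\,M}$ for the Poisson bivector of $M$, at each $m$ we have $(\eta_{\,M})_m\in\Lambda^2E_{M,m}$, so $(\Lambda^2\dif\!g_m)\bigl((\eta_{\,M})_m\bigr)$ lies in $\Lambda^2\bigl(\dif\!g_m(E_{M,m})\bigr)$; but this bivector is the Poisson bivector of $(Q,\e)$ at $g(m)$, hence nondegenerate, and therefore $\dif\!g_m(E_{M,m})=T_{g(m)}Q$. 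In particular $g$ restricts to a submersion of the leaf $P$ of $E_M$ through $x$ onto $Q$, so $\dim P\ge\dim Q$.

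Now I construct $\r$. Fix a Darboux chart of $(Q,\e)$ near $y$ and let $g_1,\dots,g_{2l}$ (with $2l=\dim Q$) be the components of $g$ in it; since $g$ is a Poisson morphism onto $(Q,\e)$ and Darboux brackets are constant, the matrix $\bigl(\{g_i,g_j\}_M\bigr)$ equals the standard symplectic matrix identically. I then run Weinstein's splitting construction for the Poisson manifold $M$ at $x$ (as recalled in the proof of Proposition~\ref{prop:pf_to_presym}), this time taking $g_1,\dots,g_{2l}$ as the first $l$ conjugate coordinate pairs, which is admissible precisely because of the bracket relations just noted. Since $M$ is regular this yields coordinates $(g_1,\dots,g_{2l},q_{l+1},p_{l+1},\dots,q_r,p_r,z_1,\dots,z_s)$ in which the Poisson bivector of $M$ is $\sum_a\partial_{g_{2a-1}}\wedge\partial_{g_{2a}}+\sum_{j>l}\partial_{q_j}\wedge\partial_{p_j}$, the leaf $P$ through $x$ being a level set of $(z_1,\dots,z_s)$ with $(g_1,\dots,g_{2l},q_{l+1},\dots,p_r)$ as Darboux coordinates for its symplectic form $\o$. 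Putting $\r=(g_1,\dots,g_{2l},q_{l+1},p_{l+1},\dots,q_r,p_r):M\to P$ we get $\ker\dif\!\r=\langle\partial_{z_1},\dots,\partial_{z_s}\rangle$, hence $TM=E_M\oplus\ker\dif\!\r$, and $\r_*(L_M)=L(TP,\o)$. This settles (i).

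It remains to exhibit $\psi$. By construction $g=\psi\circ\r$, where $\psi:P\to Q$ is the projection onto the first Darboux block $(g_1,\dots,g_{2l})$; this $\psi$ is a Poisson morphism $(P,\o)\to(Q,\e)$, equivalently $\psi_*\bigl(L(TP,\o)\bigr)=\psi_*\bigl(\r_*(L_M)\bigr)=g_*(L_M)=L(TQ,\e)$. Hence $\s\circ\phi=g=\psi\circ\r$, which is (iii). The main obstacle — and the reason the three submersions must be produced in this order — is that $\r$ cannot be chosen independently of $\s$ and $\phi$: the push-forward of $L_M$ by an arbitrary submersion onto $P$ with kernel complementary to $E_M$ need not even be well defined, the Poisson bivector of $M$ failing in general to be projectable along it; one is thus forced to run Weinstein's splitting with the components of $\s\circ\phi$ built in, and what makes that possible is exactly the key fact of the second step, namely that $\s\circ\phi$ is a Poisson submersion onto the symplectic manifold $Q$.
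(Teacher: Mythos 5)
Your proposal is correct and follows essentially the same route as the paper: both first produce $\s$ on $N$ via Proposition \ref{prop:pf_to_presym} and then run Weinstein's splitting on $M$ seeded with $\s\circ\phi$ --- your choice of the Darboux components $g_1,\dots,g_{2l}$ of $\s\circ\phi$ as the first conjugate pairs is exactly the paper's distribution $\V$ spanned by the Hamiltonian vector fields of the functions $u\circ\s\circ\phi$. The only presentational differences are that the paper first reduces to $\phi$ a surjective submersion and phrases the splitting as a local product $(M',L')\times\bigl(Q,L(TQ,\e)\bigr)$, whereas you justify the admissibility of the seeded splitting by checking $\dif(\s\circ\phi)(E_M)=TQ$ directly from the bivector and then work in a single Weinstein chart.
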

\begin{proof}
From Proposition \ref{prop:pfpb} we obtain that $\dif\!\phi(E_M)=E_N$\,. As, locally, $\phi$ is the composition
of a submersion followed by an immersion, it follows that we may assume that $\phi$ is a surjective submersion.\\
\indent
By Proposition \ref{prop:pf_to_presym}\,, locally, there exists a submersion $\s:M\to(Q,\e)$
onto a symplectic manifold such that assertion (ii) is satisfied.\\
\indent
Let $\V$ be the distribution on $M$ generated by all of the Hamiltonian vector fields determined by $u\circ\s\circ\phi$\,,
with $u$ a function on $Q$\,; obviously, $\V\subseteq E_M$\,.
Then arguments similar to the inductive step of the proof of \cite[Theorem 2.1]{Wei-local_P} show that:\\
\indent
\quad(a) $\V$ is a foliation mapped by $\s\circ\phi$ onto $TQ$\,;\\
\indent
\quad(b) $\V$ and $E_M\cap{\rm ker}\dif\!\phi$ are nondegenerate and complementary orthogonal
with respect to the symplectic structure $\o_M$ of $E_M$\,;\\
\indent
\quad(c) $\o_M$ restricted to $\V$ is projectable (onto $\e$\,) with respect to $\s\circ\phi$\,;\\
\indent
\quad(d) $\o_M$ restricted to $E_M\cap{\rm ker}\dif\!\phi$ is projectable with respect to $\V$.\\
\indent
Consequently, $(E_M,\o_M)$ induces on any fibre $M'$ of $\s\circ\phi$ a Poisson structure $L'$ such that, locally,
$(M,L_M)$ is the product of $(M',L')$ and $\bigl(Q,L(TQ,\e)\bigr)$\,.\\
\indent
By Proposition \ref{prop:pf_to_presym}\,, locally, there exists a submersion $\r':M'\to(P',\o')$ such that
${\rm ker}\dif\!\r'\oplus(E_M\cap TM')=TM'$ and $\r'_*(L')=L(TP',\o')$\,.\\
\indent
If we define $(P,\o)=(P',\o')\times(Q,\e)$\,, $\r=\r'\times\s$ and $\psi:P\to Q$ the projection
then it is easy to see that $\r$\,, $\s$ and $\psi$ are as required.
\end{proof}

\section{Generalized complex linear maps} \label{section:gclm}

\indent
A \emph{linear generalized complex structure} on a vector space $V$ is a maximal isotropic subspace $L=L(E,\ep)$ of
$V^{\C}\oplus\bigl(V^{\C}\bigr)^*$ such that $L\cap\overline{L}=\{0\}$\ \cite{Gua-thesis}\,,\,\cite{Hit-gc_QJM}\,;
equivalently, $E+\overline{E}=V^{\C}$ and ${\rm Im}\bigl(\ep|_{E\cap\overline{E}}\bigr)$ is
nondegenerate \cite{Gua-thesis}\,.\\
\indent
The condition $E+\overline{E}=V^{\C}$ means that $E$ is a \emph{linear co-CR structure} on $V$ \cite{fq}\,;
equivalently, the annihilator $E^0$ of $E$ is a linear CR structure on $V$ (that is, $E^0\cap\overline{E^0}=\{0\}$).\\
\indent
On the other hand, as ${\rm Im}\bigl(\ep|_{E\cap\overline{E}}\bigr)$ is nondegenerate,
$L\bigl(E\cap\overline{E},{\rm Im}\bigl(\ep|_{E\cap\overline{E}}\bigr)\bigr)$ is a linear Poisson structure
on $V$.\\
\indent
If $L=L(E,\ep)$ is a linear generalized complex structure then we call $E$ and
$L\bigl(E\cap\overline{E},{\rm Im}\bigl(\ep|_{E\cap\overline{E}}\bigr)\bigr)$
\emph{the associated linear co-CR} and \emph{Poisson structures}, respectively.\\
\indent
A map $f:(V,E_V)\to(W,E_W)$ between vector spaces endowed with linear (co-)CR structures is
a \emph{(co-)CR linear map} if it is linear and $f(E_V)\subseteq E_W$\,.

\begin{defn} \label{defn:gcl}
A linear map between vector spaces endowed with linear generalized complex structures
is \emph{generalized complex linear} if it is a co-CR linear Poisson morphism,
with respect to the associated linear co-CR and Poisson structures.
\end{defn}

\indent
Note that, Definition \ref{defn:gcl} is invariant under linear $B$-field transformations.
Also, the composition of two generalized complex linear maps is a generalized complex linear map.

\begin{prop} \label{prop:gcl}
Let $f:V\to W$ be a linear map between vector spaces endowed with linear generalized complex
structures $L_V$\! and $L_W$, respectively.\\
\indent
Then the following assertions are equivalent:\\
\indent
\quad{\rm (i)} $f$ is generalized complex linear.\\
\indent
\quad{\rm (ii)} Up to linear $B$-field transformations, $f$ is the direct sum of a a complex linear map, between complex vector spaces,
and a linear Poisson morphism, between symplectic vector spaces.
\end{prop}
\begin{proof}
Suppose that (i) holds and let $E_V$ and $E_W$ be the linear co-CR structures associated to $L_V$ and $L_W$\,, respectively.\\
\indent
As $f:(V,E_V)\to(W,E_W)$ is co-CR linear, we obtain $f\bigl(E_V\cap\overline{E_V}\bigr)\subseteq E_W\cap\overline{E_W}$\,.
But $f$ is, also, a linear Poisson morphism, with respect to the linear Poisson structures associated to $L_V$ and $L_W$\,,
respectively. From Proposition \ref{prop:pfpb} we obtain that $f\bigl(E_V\cap\overline{E_V}\bigr)=E_W\cap\overline{E_W}$.
Moreover, $f$ restricts to give a linear Poisson morphism between $E_V\cap\overline{E_V}$ and $E_W\cap\overline{E_W}$\,,
endowed with the linear symplectic structures corresponding to the linear Poisson structures associated to $L_V$ and $L_W$\,,
respectively.\\
\indent
We, also, obtain $f^{-1}\bigl(E_W\cap\overline{E_W}\bigr)={\rm ker}\,f+\bigl(E_V\cap\overline{E_V}\bigr)$
and, consequently, there exist complementary vector spaces $V'$ and $W'$ of $E_V\cap\overline{E_V}$
and $E_W\cap\overline{E_W}$ in $V$ and $W$, respectively, such that $f\bigl(V'\bigr)\subseteq W'$.\\
\indent
It is obvious that $E_V$ and $E_W$ induce linear complex structures on $V'$ and $W'$, respectively.
Moreover, $f$ restricts to give a complex linear map between these two complex vector spaces.\\
\indent
Now, (i)$\Longrightarrow$(ii) follows quickly from \cite[Theorem 4.13]{Gua-thesis}\,, whilst (ii)$\Longrightarrow$(i) is trivial.
\end{proof}

\indent
The next result is an immediate consequence of Proposition \ref{prop:gcl}\,.

\begin{cor} \label{cor:gcl}
Let $f:V\to W$ be a linear isomorphism between vector spaces endowed with linear generalized complex
structures $L_V$\! and $L_W$, respectively.\\
\indent
Then the following assertions are equivalent:\\
\indent
\quad{\rm (i)} $f$ is generalized complex linear.\\
\indent
\quad{\rm (ii)} $f_*\bigl(L_V\bigr)=L_W$, up to linear $B$-field transformations.
\end{cor}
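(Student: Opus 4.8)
The plan is to deduce the corollary directly from Proposition \ref{prop:gcl}, using Proposition \ref{prop:pfpb} to compute push-forwards along the isomorphism $t$ and exploiting the $B$-field invariance of Definition \ref{defn:gcl}. First I would record the elementary observation that, since $t$ is an isomorphism, $t_*$ conjugates $B$-field transformations: from $({\rm exp}\,B)(L(E,\ep))=L(E,\ep+B|_E)$ and Proposition \ref{prop:pfpb} (applied with $\ker t=\{0\}$) one gets $t_*\bigl(({\rm exp}\,B)(L)\bigr)=\bigl({\rm exp}\,(t^{-1})^{*}B\bigr)\bigl(t_*(L)\bigr)$. Hence the phrase ``$t_*(L_V)=L_W$ up to linear $B$-field transformations'' is unambiguous, and, since Definition \ref{defn:gcl} is insensitive to replacing $L_V$ and $L_W$ by $B$-field transforms, in proving the equivalence it suffices to treat condition (ii) in the form $t_*(L_V)=L_W$.

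For (i)$\Rightarrow$(ii): if $t$ is generalized complex linear then, by Proposition \ref{prop:gcl}, up to linear $B$-field transformations $L_V$ and $L_W$ are in normal form, say $L_V=L\bigl(V^0\oplus V^{1,0},{\rm i}\,\o_V\bigr)$ and $L_W=L\bigl(W^0\oplus W^{1,0},{\rm i}\,\o_W\bigr)$, and $t$ is an $f$-linear Poisson morphism with respect to the corresponding linear $f$-structures and the associated Poisson structures $L\bigl(V^0,\o_V\bigr)$ and $L\bigl(W^0,\o_W\bigr)$. Being an $f$-linear isomorphism, $t$ maps $V^0$ onto $W^0$ and $V^{1,0}$ onto $W^{1,0}$; being moreover a Poisson morphism between the symplectic vector spaces $(V^0,\o_V)$ and $(W^0,\o_W)$, it restricts to a symplectomorphism $V^0\to W^0$, so that $(t^{-1})^{*}\o_V=\o_W$ on $W^0$. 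Then Proposition \ref{prop:pfpb}, with $(E\cap\ker t)^{\perp_{\ep}}=E$ since $\ker t=\{0\}$, gives $t_*(L_V)=L\bigl(t(V^0\oplus V^{1,0}),(t^{-1})^{*}({\rm i}\,\o_V)\bigr)=L\bigl(W^0\oplus W^{1,0},{\rm i}\,\o_W\bigr)=L_W$, which is (ii).

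For (ii)$\Rightarrow$(i): assume $t_*(L_V)=L_W$ and write $L_V=L(E_V,\ep_V)$, $L_W=L(E_W,\ep_W)$, where $E_V=\p(L_V)$ and $E_W=\p(L_W)$ are the associated linear co-CR structures. Since $\ker t=\{0\}$, Proposition \ref{prop:pfpb} gives $t_*(L_V)=L\bigl(t(E_V),\check{\ep}_V\bigr)$ with $t^{*}(\check{\ep}_V)=\ep_V$; comparing with $L_W=L(E_W,\ep_W)$ yields $t^{\C}(E_V)=E_W$ and $(t^{-1})^{*}\ep_V=\ep_W$. The first identity says that $t$ is co-CR linear. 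Since $t$ is real, $t^{\C}$ commutes with conjugation, so $t^{\C}\bigl(E_V\cap\overline{E_V}\bigr)=E_W\cap\overline{E_W}$, and $(t^{-1})^{*}\ep_V=\ep_W$ then forces $t$ to intertwine ${\rm Im}\bigl(\ep_V|_{E_V\cap\overline{E_V}}\bigr)$ with ${\rm Im}\bigl(\ep_W|_{E_W\cap\overline{E_W}}\bigr)$; by Proposition \ref{prop:pfpb} this means $t$ pushes the linear Poisson structure associated to $L_V$ onto that associated to $L_W$, that is, $t$ is a Poisson morphism between them. Thus $t$ is a co-CR linear Poisson morphism, hence generalized complex linear.

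I expect the only delicate point to be the bookkeeping at the two invocations of Proposition \ref{prop:pfpb}: one must check that, because $t$ is an isomorphism, the subspace $(E\cap\ker t)^{\perp_{\ep}}$ reduces to $E$ and the characterising relation for $\check{\ep}$ becomes a genuine pull-back of forms, so that the push-forwards come out as equalities of linear generalized complex (resp.\ Poisson, resp.\ co-CR) structures rather than mere inclusions; everything else is a direct reading of Proposition \ref{prop:gcl}.
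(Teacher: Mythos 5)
Your argument is correct and follows the route the paper intends: the paper dismisses this corollary as ``an immediate consequence of Proposition \ref{prop:gcl}'', and your proof is exactly that deduction carried out in detail, via the normal form of Proposition \ref{prop:gcl} and the push-forward formulas of Proposition \ref{prop:pfpb} (including the useful explicit check that $t_*$ conjugates $B$-field transformations, which the paper leaves implicit).
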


\indent
We end this section with the following:

\begin{rem} \label{rem:nonBinv}
It has been proposed another definition for the notion of generalized complex linear map by imposing that the
product of the graphs of the map and of its transpose be invariant under the product of the
(endomorphisms corresponding to the) generalized linear complex structures, of the domain and codomain \cite{Cra} (see \cite{Vai-red_gc}\,).\\
\indent
However this notion is not invariant under linear $B$-field transformations as we shall now explain.\\
\indent
Let $(V,J)$ be a complex vector space and let $b$ be a two-form on $V$; denote by $L_J$ the linear generalized complex structure
corresponding to $J$. Then the map ${\rm Id}_V:\bigl(V,L_J\bigr)\to\bigl(V,L_{({\rm exp}\,b)(L_J)}\bigr)$ satisfies
the above mentioned condition if and only if $b$ is of type $(1,1)$\,, with respect to $J$.\\
\indent
Certainly, this inconvenience would be removed if we take this definition up to linear $B$-field transformations.
However, a straightforward calculation shows that there are no such maps between symplectic vector spaces $U$ and $V$
with $\dim U-\dim V=2$\,, a rather unnatural restriction.
\end{rem}

\section{Holomorphic maps between generalized complex manifolds} \label{section:h_gc}

\indent
{}From now on, unless otherwise stated, all the manifolds are assumed connected and smooth and all the maps are
assumed smooth.\\
\indent
A \emph{generalized almost complex structure} on $M$ is a complex vector subbundle $L$ of
$T^{\C\!}M\oplus\bigl(T^{\C\!}M\bigr)^*$ such that $L_x$ is a linear generalized complex structure
on $T_xM$, for any $x\in M$. An \emph{integrable} generalized complex structure is a generalized
almost complex structure whose space of sections is closed under the (complexification of the)
Courant bracket; a \emph{generalized (almost) complex manifold} is a manifold endowed with a generalized
(almost) complex structure \cite{Gua-thesis}\,,\,\cite{Hit-gc_QJM}\,.

\begin{defn} \label{defn:ogc}
A map between generalized almost complex manifolds is \emph{holomorphic} if, at each point,
its differential is generalized complex linear.
\end{defn}

\indent
A point $x$ of a generalized almost complex manifold $(M,L)$ is \emph{regular} if it is regular
for the associated almost Poisson structure; equivalently, in some open neighbourhood of $x$\,,
$\p(L)$ is a complex vector subbundle of $T^{\C\!}M$.\\
\indent
An \emph{almost $($co-$)$CR structure} on a manifold $M$ is a complex vector subbundle $\Cal$ of $T^{\C}\!M$
such that $\Cal_x$ is a linear (co-)CR structure on $T_xM$, for any $x\in M$. An
\emph{integrable} almost (co-)CR structure is an almost (co-)CR structure whose space of sections is
closed under the (Lie) bracket; a \emph{$($co-$)$CR structure} is an integrable almost (co-)CR structure (see \cite{fq}\,).\\
\indent
Note that, the eigenbundles of a complex structure are both CR and co-CR structures.
Also, a generalized almost complex structure $L$ on $M$ is regular (at each point) if and only if $\p(L)$
is an almost co-CR structure on $M$.\\
\indent
Let $\phi:M\to N$ be a submersion onto a complex manifold $(N,J)$\,; denote by $T^{1,0}N$
the eigenbundle of $J$ corresponding to ${\rm i}$\,.  Then $\dif\!\phi^{-1}\bigl(T^{1,0}N\bigr)$
is a co-CR structure on $M$. Conversely, any co-CR structure is, locally, obtained this way.\\
\indent
A map between manifolds endowed with almost (co-)CR structures is \emph{$($co-$)$CR holomorphic}
if, at each point, its differential is a \mbox{(co-)CR} linear map.\\
\indent
An \emph{almost $f$-structure} is a $(1,1)$-tensor field $F$ such that $F^3+F=0$\,. Any almost
$f$-structure on $M$ corresponds to a pair formed of an almost CR structure $\Cal$ and
an almost co-CR structure $\Dcal$\,, which are compatible \cite{fq}\,; these are given by
$\Cal=T^{1,0}M$ and $\Dcal=T^0M\oplus T^{1,0}M$, where
$T^0M$ and $T^{1,0}M$ are the eigenbundles of $F$ corresponding to $0$ and ${\rm i}$\,, respectively.\\
\indent
An almost $f$-structure is \emph{(co-)CR integrable} if the associated almost (co-)CR structure
is integrable. An \emph{(integrable almost) $f$-structure} is an almost $f$-structure which is
both CR and co-CR integrable \cite{fq}\,.\\
\indent
An almost $f$-structure $F$ and a two form $\o$ on $M$ are \emph{compatible} if $\o$ is nondegenerate on
$T^0M$ and $\iota_X\o=0$\,, for any $X\in T^{1,0}M\oplus T^{0,1}M$.\\
\indent
A generalized (almost) complex structure $L$ on $M$ is in \emph{normal form} if
$L=L\bigl(T^0M\oplus T^{1,0}M,{\rm i}\,\o\bigr)$ for some compatible almost $f$-structure and two-form
$\o$ on $M$. Note that, a generalized almost complex structure in normal form is regular.

\begin{prop} \label{prop:int_normal_form}
Let $L=L\bigl(T^0M\oplus T^{1,0}M,{\rm i}\,\o\bigr)$ be the generalized almost complex structure
in normal form, corresponding to the compatible almost $f$-structure $F$ and two-form $\o$ on $M$.\\
\indent
Then the following assertions are equivalent:\\
\indent
\quad{\rm (i)} $L$ is integrable.\\
\indent
\quad{\rm (ii)} $F$ is integrable, $L(T^0M,\o)$ is a Poisson structure and $\o$ is invariant
under the parallel displacement of $T^{1,0}M\oplus T^{0,1}M$.
\end{prop}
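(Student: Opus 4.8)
The plan is to test the integrability of $L$ on a local set of generators of $\Gamma(L)$ adapted to the splitting $T^{\C\!}M=T^0M\oplus T^{1,0}M\oplus T^{0,1}M$, and to read the three conditions of (ii) off the three essentially different types of Courant bracket that occur. First I would record the setup: compatibility of $F$ and $\o$ means exactly that $\iota_X\o=0$ for every $X\in\Gamma\bigl(T^{1,0}M\oplus T^{0,1}M\bigr)$ and that $\o|_{T^0M}$ is nondegenerate. Writing $E=T^0M\oplus T^{1,0}M$ we have $L=L(E,{\rm i}\,\o|_E)$ and $E\cap\overline E=T^0M$, so that $L\cap T^{\C\!}M=T^{1,0}M$ (by nondegeneracy of $\o$ on $T^0M$; cf.\ Proposition \ref{prop:L}\,) and the associated Poisson structure of $L$ is $L\bigl(T^0M,\o|_{T^0M}\bigr)$\,. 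Next I would note that $\Gamma(L)$ is generated, over the smooth functions, by the sections of the three types $X'+0$ (with $X'\in\Gamma(T^{1,0}M)$\,), $X_0+{\rm i}\,\iota_{X_0}\o$ (with $X_0\in\Gamma(T^0M)$\,) and $0+\a$ (with $\a\in\Gamma({\rm Ann}\,E)$\,): indeed, any section of $L$ is of the form $X_0+X'+\xi$ with $\xi|_E={\rm i}\,\iota_{X_0+X'}\o|_E$\,, and $\xi-{\rm i}\,\iota_{X_0}\o\in\Gamma({\rm Ann}\,E)$ because $\iota_{X'}\o=0$\,. Since $L$ is isotropic, $L$ is integrable if and only if the Courant bracket of any two of these generators is again a section of $L$\,.

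Secondly, I would dispose of the brackets which do not involve $\o$ in an essential way. The bracket of two sections $0+\a$ vanishes. Since $\iota_{X'}\a=\iota_{X_0}\a=0$ for $\a\in\Gamma({\rm Ann}\,E)$\,, one gets $[X'+0,0+\a]=\iota_{X'}\dif\a$ and $[X_0+{\rm i}\,\iota_{X_0}\o,0+\a]=\iota_{X_0}\dif\a$\,; requiring these one-forms to annihilate $E$ for all such $\a$ is equivalent to $[\Gamma(T^{1,0}M),\Gamma(E)]\subseteq\Gamma(E)$ and $[\Gamma(T^0M),\Gamma(E)]\subseteq\Gamma(E)$\,, that is, to the involutivity of $E$ --- the co-CR integrability of $F$\,; and then $T^0M=E\cap\overline E$ is involutive, hence a foliation. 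Also $[X'+0,Y'+0]=[X',Y']+0$ is a section of $L$ if and only if $[X',Y']$ is a section of $L\cap T^{\C\!}M=T^{1,0}M$ --- the CR integrability of $F$\,. So these brackets close in $\Gamma(L)$ precisely when $F$ is integrable.

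Finally I would treat the brackets of the remaining types, where $\o$ enters. Using $\dif\iota_X\o=\mathcal{L}_X\o-\iota_X\dif\o$ and $[\mathcal{L}_X,\iota_Y]=\iota_{[X,Y]}$\,, together with $\iota_{X'}\o=0$ for $X'\in\Gamma(T^{1,0}M)$\,, one computes the vector and one-form parts of $[X'+0,Y_0+{\rm i}\,\iota_{Y_0}\o]$ and of $[X_0+{\rm i}\,\iota_{X_0}\o,Y_0+{\rm i}\,\iota_{Y_0}\o]$, and imposes that the one-form part restricted to $E$ equal ${\rm i}$ times the contraction of $\o$ by the vector part --- the latter being a section of $E$, by the co-CR integrability now available. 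Pairing with a section $Z$ of $T^0M$\,: the second bracket yields $(\dif\o)(X_0,Y_0,Z)=0$\,, i.e.\ $\o$ is closed along the foliation $T^0M$\,, which (as $\o|_{T^0M}$ is nondegenerate) is equivalent to $L(T^0M,\o|_{T^0M})$ being a Poisson structure; the first bracket yields $(\mathcal{L}_{X'}\o)(Y_0,Z)=0$ for all $X'\in\Gamma(T^{1,0}M)$\,, i.e.\ $\o$ is invariant under the parallel displacement of $T^{1,0}M\oplus T^{0,1}M$ (taking complex conjugates covers $T^{0,1}M$ as well). Pairing instead with a section of $T^{1,0}M$ gives nothing new: from the first bracket it reduces, by nondegeneracy of $\o$ on $T^0M$\,, to a consequence of the CR integrability already obtained, while from the second it reproduces the invariance condition. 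Collecting the cases gives (i)$\Leftrightarrow$(ii); the converse implication (ii)$\Rightarrow$(i) needs no extra work, since the same identities show that, under the three conditions, every bracket of generators lands back in $\Gamma(L)$\,.

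The hard part will be the bookkeeping in this last step: one has to keep track of the factor $\tfrac12$ in the Courant bracket, of which of the three summands of $T^{\C\!}M$ survive under $\o$ and under the interior products, and of the decomposition of each test section of $E$ into its $T^0M$- and $T^{1,0}M$-components, and then verify that the conditions extracted are exactly leafwise closedness and invariance of $\o$ and nothing stronger --- all while taking care that co-CR and CR integrability are genuinely in hand at the moment when the vector parts of these brackets are interpreted.
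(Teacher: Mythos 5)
Your argument is correct, and it reaches the same final decomposition as the paper, but by a more self-contained route. The paper's proof is essentially two sentences: it cites Gualtieri's integrability criterion (\cite[Proposition 4.19]{Gua-thesis}) to reduce (i) to ``$E=T^0M\oplus T^{1,0}M$ involutive and $(\dif\o)|_E=0$'', and then splits $(\dif\o)|_E=0$ according to the types of the three arguments, obtaining exactly your three conditions: $\dif\o(X_0,Y_0,Z_0)=0$ (leafwise closedness, i.e.\ $L(T^0M,\o)$ Poisson), $\dif\o(X',Y',Z_0)=0$ and $\dif\o(X_0,X',Z')=0$ (CR integrability, via nondegeneracy of $\o$ on $T^0M$), and $\dif\o(X_0,Y_0,Z')=(\Lie_{Z'}\o)(X_0,Y_0)=0$ (invariance). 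What you do instead is re-derive that criterion in this special case by brute force: you choose the adapted generators $X'+0$, $X_0+{\rm i}\,\iota_{X_0}\o$, $0+\a$ of $\Gamma(L)$ and check closure of the Courant bracket on each pair, correctly noting that isotropy of $L$ makes this sufficient. The bookkeeping you flag at the end (the $\tfrac12\dif(\iota_X\b-\iota_Y\a)$ term, the Cartan identities $\dif\iota_X\o=\Lie_X\o-\iota_X\dif\o$ and $[\Lie_X,\iota_Y]=\iota_{[X,Y]}$) all works out, and your identification of which pairing gives which condition --- including the observation that pairing the mixed brackets with $T^{1,0}M$ yields nothing beyond CR integrability and invariance --- matches the paper's decomposition exactly. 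So your approach buys independence from the external citation at the cost of length; the paper's buys brevity at the cost of outsourcing the key reduction.
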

\begin{proof}
From \cite[Proposition 4.19]{Gua-thesis} it follows quickly that assertion (i) is equivalent to the fact that
$F$ is co-CR integrable and $(\dif\!\o)|_{T^0M\oplus T^{1,0}M}=0$. Assuming $F$ co-CR integrable, the latter
condition is equivalent to the fact that $L(T^0M,\o)$ is a Poisson structure, $F$ is CR integrable and
$(\mathcal{L}_X\o)|_{T^0M}=0$ for any vector field $X$ tangent to $T^{1,0}M\oplus T^{0,1}M$,
where $\mathcal{L}$ denotes the Lie derivative.
\end{proof}

\indent
All of the examples of generalized complex structures of \cite{CavGua-nil} are in normal form.
Similarly, we have the following example, due to \cite{AleDav-GC}\,.

\begin{exm} \label{exm:normal_GC_on_Lie_groups}
Let $G$ be a compact Lie group of even rank assumed, for simplicity, semisimple. Let $\mathfrak{g}$ be the Lie algebra of $G$
and let $\mathfrak{k}$ be the Lie algebra of a maximal torus in $G$.\\
\indent
Let $\mathfrak{c}$ be a Borel subalgebra of $\mathfrak{g}^{\C}$
containing $\mathfrak{k}^{\C}$. Any such Borel subalgebra is obtained by choosing a base for the root system of $\mathfrak{g}^{\C}$
corresponding to $\mathfrak{k}^{\C}$ (see \cite{Hum}\,):
$\mathfrak{c}=\mathfrak{k}^{\C}\oplus\bigoplus_{\a\succ0}\mathfrak{g}^{\a}$,
where $\mathfrak{g}^{\a}$ is the root space of $\mathfrak{g}^{\C}$ corresponding to the root $\a$.\\
\indent
As $\overline{\mathfrak{g}^{\a}}=\mathfrak{g}^{-\a}$ (see \cite{BurRaw}\,),
we have $\mathfrak{c}+\overline{\mathfrak{c}}=\mathfrak{g}^{\C}$ and $\mathfrak{c}\cap\overline{\mathfrak{c}}=\mathfrak{k}^{\C}$.
Consequently, $\mathfrak{c}$ corresponds to a left invariant co-CR structure $\Cal$ on $G$ (for any $a\in G$, we have that
$\Cal_a$ is the left translation of $\mathfrak{c}$\,, at $a$).\\
\indent
Let $\o$ be a linear symplectic form on $\mathfrak{k}$ ($\dim\mathfrak{k}=\rank G$ is even), extended to $\mathfrak{g}$ such that
$\iota_X\o=0$ for any $X\in\bigoplus_{\a\succ0}\mathfrak{g}^{\a}$.
We shall denote by the same letter $\o$ the left invariant two-form on $G$, determined by $\o$.\\
\indent
Then $L\bigl(\Cal,{\rm i}\,\o\bigr)$ is a generalized complex structure on $G$ in normal form.
\end{exm}

\indent
The next result follows from the proof of \cite[Theorem 4.35]{Gua-thesis}\,.

\begin{thm} \label{thm:local_gcs}
Let $L$ be a regular generalized almost complex structure on $M$ and let $L'$ be the associated almost Poisson structure.\\
\indent
Then the following assertions are equivalent:\\
\indent
\quad{\rm (i)} $L$ is integrable.\\
\indent
\quad{\rm (ii)} $\p(L)$ and $L'$ are integrable and, locally, for any submersion $\r:M\to P$,
with $\dim P=\rank\bigl(\p(L')\bigr)$ and $\r_*(L')$ a symplectic structure on $P$,
we have that, up to a $B$-field transformation, $L$ is in normal form
with respect to the $f$-structure on $M$ determined by $\p(L)$ and $\p(L)\cap{\rm ker}\dif\!\r$.
\end{thm}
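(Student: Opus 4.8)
The plan is to reduce the statement to the purely local/linear-algebraic picture and then invoke the structure results already established. The implication (ii)$\Longrightarrow$(i) is the easy direction: if $L$ is, up to a $B$-field transformation, in special normal form with respect to a compatible $f$-structure $F$ and two-form, then integrability follows from Proposition \ref{prop:int_normal_form} once one checks that the conditions listed there (Poisson-ness of $L(T^0M,\o)$ and invariance of $\o$ under parallel displacement of $T^{1,0}M\oplus T^{0,1}M$) are exactly what the hypotheses in (ii) deliver — the Poisson condition is the integrability of $L'=\r_*(L')$ pulled back, and the parallel-displacement condition is built into \emph{special} normal form together with the fibration $\r$. Since $B$-field transformations preserve the Courant bracket, integrability of the normal form transfers back to $L$. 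So the bulk of the work is (i)$\Longrightarrow$(ii).

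For (i)$\Longrightarrow$(ii), first note that integrability of $L$ forces integrability of the associated almost co-CR structure $\p(L)$ and of the associated almost Poisson structure $L'$; this is part of what Proposition \ref{prop:funct_integr} (applied to the pull-back/push-forward descriptions) and the infinitesimal version of the normal-form analysis give, and it is also in the proof of \cite[Theorem 4.35]{Gua-thesis}. So fix a submersion $\r:M\to P$ as in (ii): this exists locally by Theorem \ref{thm:pbP} applied to $L'$, with $\r_*(L')=L(TP,\o)$ a symplectic structure and $L'=\r^*(\r_*(L'))$. The foliation $\ker\dif\!\r$ together with $\p(L)$ then determines an almost $f$-structure $F$ on $M$: indeed $\p(L)$ is a co-CR structure and $\p(L)\cap\overline{\p(L)}$ is the complexified symplectic foliation $E_{L'}^{\C}$ of $L'$, while $\ker\dif\!\r$ is complementary to that foliation inside $TM$; the eigenbundle prescription $T^0M\oplus T^{1,0}M=\p(L)$, $T^0M=(\ker\dif\!\r)$ (suitably, its complexification) recovers $F$ pointwise via the linear discussion preceding Proposition \ref{prop:linear_normal_form}.

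Now apply Proposition \ref{prop:linear_normal_form} pointwise: there is a unique $B\in\Lambda^2T^*M$ (smooth in $x$, since it is characterised by the explicit formulas $B=-\mathrm{Re}\,\ep$ on $T^0M$ and $B=-\ep$ on $T^{1,0}M$ and $T^0M\otimes T^{1,0}M$, and $\ep$ is smooth) such that $(\exp B)(L)=L(T^0M\oplus T^{1,0}M,\mathrm{i}\,\o')$ is fibrewise in normal form with respect to $F$. This $B$ need not be closed, but $(\exp B)(L)$ is a genuine generalized \emph{almost} complex structure; the point is that after the transformation the structure is in normal form, and the residual freedom in choosing $B$ closed is harmless because we only assert "up to a $B$-field transformation." One then checks that $(\exp B)(L)$ is actually in \emph{special} normal form: $T^{1,0}M\oplus T^{0,1}M=\ker\o'$ must be integrable, which follows from $\ker\dif\!\r$ being integrable (it is a foliation since $\r$ is a submersion) together with $[F,F]=0$, the latter being forced by the compatibility of $\p(L)$ with $\ker\dif\!\r$ and the integrability of $\p(L)$ via \cite[page 38]{KoNo}. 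The main obstacle is the bookkeeping showing that the pointwise $B$ from Proposition \ref{prop:linear_normal_form} is compatible with the chosen $\r$ — i.e.\ that the $\o'$ obtained after the transformation is the pull-back under $\r$ of the symplectic form on $P$ on the $T^0M$-directions, so that the normal form is genuinely adapted to $\r$ and not merely pointwise; this is precisely where one must retrace the argument in the proof of \cite[Theorem 4.35]{Gua-thesis}, combining it with Theorem \ref{thm:pbP} and Proposition \ref{prop:pf_to_presym} to pin down $\r$.
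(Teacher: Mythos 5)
The paper offers no argument for this theorem beyond the remark that it ``follows from the proof of \cite[Theorem 4.35]{Gua-thesis}'', and your outline retraces exactly that route (integrability of $\p(L)$ and $L'$, a splitting submersion $\r$ via Theorem \ref{thm:pbP} and Proposition \ref{prop:pf_to_presym}, pointwise normalisation via Proposition \ref{prop:linear_normal_form}), so the overall strategy matches the intended one. But there is a genuine gap at the decisive step. You write that the pointwise $B$ supplied by Proposition \ref{prop:linear_normal_form} ``need not be closed'' and that this is ``harmless because we only assert up to a $B$-field transformation''. That reasoning fails: in this paper a $B$-field transformation is by definition given by a \emph{closed} two-form (otherwise $\exp(B)$ does not preserve the Courant bracket), and Proposition \ref{prop:linear_normal_form} makes $B$ \emph{unique} once the $f$-structure is fixed --- and here the $f$-structure is fixed by the statement, being determined by $\p(L)$ and $\p(L)\cap\ker\dif\!\r$. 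So there is no residual freedom to exploit: assertion (ii) holds precisely when that unique pointwise $B$ satisfies $\dif\!B=0$, and proving this from the integrability of $L$ (i.e.\ from the closedness of $\ep$ along $E=\p(L)$ in the Lie algebroid sense) is the actual content of (i)$\Rightarrow$(ii). Your proposal defers exactly this to ``retracing'' \cite[Theorem 4.35]{Gua-thesis}, while the obstacle you single out instead (that $\o'$ be $\r$-basic) is part of the integrability criterion of Proposition \ref{prop:int_normal_form}, not of the definition of normal form, and is the easier half.

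A smaller but real slip: you set $T^0M=\ker\dif\!\r$. It should be the other way around: $T^{1,0}M=\p(L)\cap(\ker\dif\!\r)^{\C}$, hence $T^{1,0}M\oplus T^{0,1}M=(\ker\dif\!\r)^{\C}$, while $T^0M$ is the (complexified) symplectic distribution of $L'$, on which $\o$ must be nondegenerate. With the correct assignment the ``special'' condition --- integrability of $T^{1,0}M\oplus T^{0,1}M$ --- is automatic, since it is the tangent bundle to the fibres of $\r$\,; this is in fact the observation you use later in the paragraph, inconsistently with the earlier assignment.
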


\indent
We can, now, give the smooth version of Proposition \ref{prop:gcl}\,.

\begin{prop} \label{prop:ogc_basic}
Let $\phi:(M,L_M)\to(N,L_N)$ be a map between generalized complex manifolds.\\
\indent
Then the following assertions are equivalent:\\
\indent
\quad{\rm (i)} $\phi$ is holomorphic.\\
\indent
\quad{\rm (ii)} On an open neighbourhood of each regular point of $L_M$ on which $\phi$ has constant rank,
up to $B$-field transformations, $\phi$ is the product of a Poisson morphism between symplectic manifolds
and a holomorphic map between complex manifolds.
\end{prop}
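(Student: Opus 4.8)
The plan is to reduce the statement, essentially verbatim, to its pointwise linear algebra counterpart, Proposition \ref{prop:gcl}, by upgrading each of the three items using the structural results already established for generalized complex manifolds. The implications (ii)$\Longrightarrow$(iii) and (iii)$\Longrightarrow$(i) are the easy directions: if, up to $B$-field transformations, $\phi$ is locally the product of a Poisson morphism between symplectic manifolds with a holomorphic map between complex manifolds, then at each point its differential is, up to a linear $B$-field transformation, the direct sum of a linear Poisson morphism and a complex linear map, so $\dif\!\phi_x$ is generalized complex linear by Proposition \ref{prop:gcl}(iii)$\Longrightarrow$(i); and an $f$-holomorphic Poisson morphism between manifolds in special normal form is in particular such a product locally, because in special normal form the distribution $T^{1,0}M\oplus T^{0,1}M$ is integrable, giving a local splitting of $M$ as (complex leaf) $\times$ (symplectic Poisson quotient) — this is exactly Remark \ref{rem:LJ}(2) read in the integrable smooth setting via Proposition \ref{prop:int_normal_form}.

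The substantive implication is (i)$\Longrightarrow$(ii). Fix a regular point $x$ of $L_M$ on which $\phi$ has constant rank. First I would record that $\phi(x)$ is then a regular point of $L_N$: since $\dif\!\phi_x$ is generalized complex linear it carries the associated linear co-CR and Poisson structures of $L_{M,x}$ to those of $L_{N,\phi(x)}$, and combined with constancy of rank this forces $\rank(\p(L_N))$ and $\rank({}^{*\!}\p(L_N))$ to be locally constant near $\phi(x)$. Next, by Theorem \ref{thm:local_gcs}, shrinking to a neighbourhood of $x$ (resp.\ of $\phi(x)$) I may choose an almost $f$-structure $F_M$ on $M$ (resp.\ $F_N$ on $N$) determined by $\p(L_M)$ together with a transversal to the canonical symplectic foliation, and a closed two-form on $M$ (resp.\ on $N$) so that, after the corresponding $B$-field transformation, $L_M$ and $L_N$ are in special normal form with associated $f$-structures $F_M$, $F_N$. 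I should check that the pull-backs of these $B$-fields are again closed two-forms — this is immediate since the submersions producing them are the ones furnished by Proposition \ref{prop:pf_to_presym}/Corollary \ref{cor:blank_up_to_B}, whose target presymplectic forms are closed. It remains to verify that, with respect to these normal forms, $\phi$ is $f$-holomorphic and a Poisson morphism. Holomorphy, i.e.\ generalized complex linearity of $\dif\!\phi$ at each point, is preserved under $B$-field transformations (the remark after Definition \ref{defn:gcl}, globalized), so at every point of the neighbourhood $\dif\!\phi$ is a co-CR linear Poisson morphism for the associated linear structures; but in special normal form the associated linear co-CR structure is $T^0{}\oplus T^{1,0}$ of $F_M$ and the associated linear Poisson structure is $L(T^0,\o_M)$, so co-CR linearity of $\dif\!\phi$ at each point is precisely $f$-holomorphy with respect to $F_M,F_N$ (using the characterization, recalled in Section \ref{section:gclm}, that $f$-linearity $=$ CR linearity $+$ co-CR linearity, where CR linearity follows because $\dif\!\phi$ also preserves the $-{\rm i}$ eigenspaces by reality), and the Poisson-morphism property of $\dif\!\phi$ at each point is exactly the statement that $\phi$ is a Poisson morphism for $L(T^0M,\o_M)$ and $L(T^0N,\o_N)$.

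The main obstacle I anticipate is the compatibility of the two local normalizations — one on $M$ and one on $N$ — with the map $\phi$: a priori the $f$-structures $F_M$ and $F_N$, and the transversals used to build them via Proposition \ref{prop:pf_to_presym}, are chosen independently, and there is no reason $\dif\!\phi(T^0M)\subseteq T^0N$ or $\dif\!\phi(\ker\dif\!\r_M)\subseteq\ker\dif\!\r_N$ for arbitrary such choices. To handle this I would argue exactly as in the linear case (the proof of Proposition \ref{prop:gcl}): from (i) one gets $\dif\!\phi(E_M\cap\overline{E_M})=E_N\cap\overline{E_N}$ fibrewise, hence $\dif\!\phi^{-1}(E_N\cap\overline{E_N})=\ker\dif\!\phi+(E_M\cap\overline{E_M})$, which lets me choose the transversal $Q_N\ni\phi(x)$ in $N$ first, pull its "symplectic-quotient" submersion $\s:N\to(Q_N,\e)$ back, and then build the submersion $\r:M\to(P,\o)$ on $M$ compatibly, invoking Proposition \ref{prop:Poisson_OK} applied to the canonical Poisson quotients (whose hypothesis $\dif\!\phi(E_M)\subseteq E_N$ is guaranteed by Proposition \ref{prop:pfpb} once $\dif\!\phi$ is a Poisson morphism on the associated Poisson structures). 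Feeding the resulting $\r,\s,\psi$ back into the construction of $F_M$ and $F_N$ via $\p(L)$ and $\p(L)\cap\ker\dif\!\r$ (resp.\ $\p(L_N)\cap\ker\dif\!\s$) makes all inclusions hold by construction, and Theorem \ref{thm:local_gcs} then puts both $L_M$ and $L_N$ simultaneously in special normal form adapted to $\phi$, completing (i)$\Longrightarrow$(ii). The remaining passage (ii)$\Longleftrightarrow$(iii) is then the smooth translation of (ii)$\Longleftrightarrow$(iii) of Proposition \ref{prop:gcl}, using the local product decomposition in special normal form noted above.
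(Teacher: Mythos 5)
Your proposal is correct and follows essentially the same route as the paper, whose proof is simply the citation of Proposition \ref{prop:Poisson_OK}, \cite[Theorem 4.35]{Gua-thesis} and Theorem \ref{thm:local_gcs}; you have merely filled in the details, correctly identifying that the only substantive point is making the two local normalizations compatible with $\phi$, which is exactly the role Proposition \ref{prop:Poisson_OK} plays. No gaps worth flagging beyond what the paper itself leaves implicit.
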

\begin{proof}
This follows quickly from Proposition \ref{prop:Poisson_OK}\,, \cite[Theorem 4.35]{Gua-thesis}
and Theorem \ref{thm:local_gcs}\,.
\end{proof}

\indent
Next, we give examples of holomorphic maps between generalized complex manifolds.

\begin{exm} \label{exm:ogc_first}
The classical holomorphic maps, the Poisson morphisms between symplectic manifolds, and their products
are, obviously, holomorphic maps between generalized complex manifolds.\\
\indent
Moreover, by Proposition \ref{prop:ogc_basic}\,, any holomorphic map $\phi:(M,L_M)\to(N,L_N)$
between generalized complex manifolds is of this form, up to $B$-field transformations, on an open neighbourhood
of each regular point of $L_M$ on which $\phi$ has constant rank.
\end{exm}

\begin{exm} \label{exm:ogc_second}
Let $G$ be a compact Lie group endowed with the generalized complex structures
$L=L\bigl(\Cal,{\rm i}\,\o\bigr)$ of Example \ref{exm:normal_GC_on_Lie_groups}\,.\\
\indent
Let $K$ be the maximal torus of $G$ whose Lie algebra is used to define $\Cal$.
Obviously, $\dif\!\phi(\Cal)$ defines a left invariant complex structure on $G/K$\,, where $\phi:G\to G/K$ is the projection.\\
\indent
Then $\phi:(G,L)\to\bigl(G/K,\dif\!\phi(\Cal)\bigr)$ is a holomorphic map.
\end{exm}

\begin{exm} \label{exm:ogc_third}
Let $G/H$ be a compact inner symmetric space (see \cite[page 23]{BurRaw} for the definition and \cite[page 38]{BurRaw}
for a table of examples) with $\rank G(=\rank H)$ even; denote by $\mathfrak{g}$ and $\mathfrak{h}$ the Lie algebras
of $G$ and $H$, respectively.\\
\indent
Endow $G$ with the generalized complex structures $L\bigl(\Cal,{\rm i}\,\o\bigr)$ of Example \ref{exm:normal_GC_on_Lie_groups}\,,
determined by a Borel subalgebra $\mathfrak{c}$ of $\mathfrak{g}^{\C}$ containing the Lie algebra of a maximal torus of $H$
(also a maximal torus of $G$, as $G/H$ is inner).\\
\indent
It follows that $\mathfrak{d}=\mathfrak{c}\cap\mathfrak{h}^{\C}$ is a Borel subalgebra of $\mathfrak{h}^{\C}$.
Let $\Dcal$ be the left invariant co-CR structure induced by $\mathfrak{d}$\,, on $H$, and let $\e=\o|_H$\,.\\
\indent
Then the inclusion map from $\bigl(H,L\bigl(\Dcal,{\rm i}\,\e\bigr)\bigr)$ to $\bigl(G,L\bigl(\Cal,{\rm i}\,\o\bigr)\bigr)$ is holomorphic.\\
\indent
Fairly similar examples can be obtained by working with nilpotent Lie groups endowed with the generalized complex structures
of \cite{CavGua-nil}\,.
\end{exm}

\indent
The following facts are immediate consequences of the definitions.

\begin{rem}
1) A map between regular generalized almost complex manifolds is
holomorphic if and only if it is a co-CR Poisson morphism, with respect to the associated
almost co-CR and Poisson structures.\\
\indent
2) Let $\phi:(M,L_M)\to(N,L_N)$ be a diffeomorphism between generalized complex manifolds. Then $\phi$ is
holomorphic if and only if, in an open neighbourhood of each regular point of $M$, we have
$\phi_*\bigl(L_M\bigr)=L_N$\,, up to $B$-field transformations.\\
\indent
3) The composition of two holomorphic maps, between generalized (almost) complex manifolds is holomorphic.\\
\indent
4) Let $(M,L)$ be a generalized complex manifold. The $\overline{\partial}$ operator on functions \cite{Gua-thesis}
(see \cite{Hit-gholo_bundles}\,, and, also, \cite{Cav-ddbar}\,, \cite{Che-d_and_dbar}\,) is defined as follows.
If $f$ is a complex valued function on $M$ then $\overline{\partial}f$ is the $L$-component of $\dif\!f$ with respect to the decomposition
$T^{\C\!}M\oplus\bigl(T^{\C\!}M\bigr)^*=L\oplus\overline{L}$\,. Then $f$ is holomorphic if and only if $\overline{\partial}f=0$.
Note that, if $L=L(E,\ep)$ is regular then the holomorphic (local) functions on $(M,L)$
are just the co-CR holomorphic functions on $(M,E)$\,. Equivalently, if $E$ is locally defined by the submersion
$\phi:M\to(N,J)$ onto the complex manifold $(N,J)$ (that is, $E=\dif\!\phi^{-1}\bigl(T^{1,0}N\bigr)$\,) then, locally,
any holomorphic function on $(M,L)$ is the composition of $\phi$ followed by a holomorphic function on $(N,J)$\,.\\
\indent
5) Let $(M,J_M,L_M)$ and $(N,J_N,L_N)$ be complex manifolds endowed with holomorphic Poisson structures (see \cite{LGeStiXu}
for the notion of holomorphic Poisson structure, and \cite{CheStiXu} for a generalization); denote by $\widetilde{L}_M$
and $\widetilde{L}_N$ the generalized complex structures associated to $L_M$ and $L_N$\,, respectively (see \cite{Hit-gc_CMP}\,).
For any map $\phi:M\to N$, any two of the following assertions imply the third:\\
\indent
\quad(i) $\phi:(M,J_M,L_M)\to(N,J_N,L_N)$ is a holomorphic Poisson morphism;\\
\indent
\quad(ii) $\phi:(M,\widetilde{L}_M)\to(N,\widetilde{L}_N)$ is holomorphic;\\
\indent
\quad(iii) $\phi:(M,J_M)\to(N,J_N)$ is holomorphic and it maps the leaves of the holomorphic symplectic foliation
associated to $L_M$ into leaves of the holomorphic symplectic foliation associated to $L_N$\,.
\end{rem}

\indent
{}From Theorem \ref{thm:pbP} we obtain the following result.

\begin{prop} \label{prop:holo_Dirac}
Let $(M,L_M)$ and $(N,L_N)$ be regular real analytic generalized complex manifolds and let
$\phi:M\to N$ be a real analytic map.\\
\indent
If $\phi$ is holomorphic then, locally,
up to the complexification of a real analytic $B$-field tranformation, the complexification of $\phi$ descends
to a complex analytic Poisson morphism between canonical Poisson quotients.
\end{prop}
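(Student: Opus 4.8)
The plan is to reduce everything to the local normal form provided by Proposition \ref{prop:ogc_basic}, complexify, and recognize the resulting data as a complex analytic Poisson morphism between the canonical Poisson quotients of Theorem \ref{thm:pbP}. First I would fix a regular point $x\in M$; since $\phi$ is holomorphic and real analytic, the set of regular points of $L_M$ is open and dense, and near $x$ we may shrink so that $\phi$ has constant rank (a real analytic map has locally constant rank on a dense open set, and on that set regularity of the target is automatic by Proposition \ref{prop:pfpb}, as in the proof of Proposition \ref{prop:Poisson_OK}). By Proposition \ref{prop:ogc_basic}, after a real analytic $B$-field transformation on the source and another on the target, $\phi$ becomes, locally, the product of a Poisson morphism $\psi\colon(P,\o)\to(Q,\e)$ between symplectic manifolds and a holomorphic map $h\colon(M',J_M)\to(N',J_N)$ between complex manifolds, where $M\cong M'\times P$ and $N\cong N'\times Q$ as generalized complex manifolds in special normal form. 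The $B$-field transformations here are real analytic because the whole construction (the submersions of Proposition \ref{prop:pf_to_presym}, which rest on \cite[Theorem 2.1]{Wei-local_P}, and the form $B$ of Corollary \ref{cor:blank_up_to_B}) can be carried out in the real analytic category.

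Next I would complexify. The complexification of a real analytic $B$-field transformation is a holomorphic $B$-field transformation on the complexified manifolds, so it suffices to work with $\phi_{\C}$ in the above product normal form. For the symplectic factor, the associated Poisson structure of $L(TP,\o)$ is $L(TP,\o)$ itself, so its canonical Poisson quotient (Theorem \ref{thm:pbP}) is $(P,\o)$ with the trivial submersion, and $\psi$ is already the Poisson morphism between the quotients. For the complex factor $L_{J_M}=L\bigl(T^{0,1}M',0\bigr)$ (in the sign convention of the paper, $\p(L)=T^{1,0}$ or $T^{0,1}$), the associated Poisson structure is zero, so its canonical Poisson quotient is a point; here the relevant statement is vacuous on the Poisson side, but after complexification the co-CR structure $E=\p(L)$ becomes an actual complex submanifold fibration, and the complexified map $h_{\C}$ descends along it to a holomorphic map between the leaf spaces — which carry the zero Poisson structure. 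Assembling the two factors, $\phi_{\C}$ descends to the product $\psi\times(\text{pt-to-pt})$, which is a complex analytic Poisson morphism between the canonical Poisson quotients of $(L_M)_{\C}$ and $(L_N)_{\C}$.

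The step I expect to be the main obstacle is matching the \emph{submersions} defining the canonical Poisson quotients on the two sides so that $\phi_{\C}$ genuinely descends, rather than merely descending after a further (possibly non-real-analytic) choice. Concretely: the canonical Poisson quotient of $(L_M)_{\C}$ is $M/\!\ker\ep_{\C}$, i.e. $M$ modulo the complexified presymplectic foliation of $E_M\cap\overline{E_M}$; I need the leaf space of $\phi$-related such foliations on $M$ and $N$, and the normal form of Proposition \ref{prop:ogc_basic} must be arranged compatibly with the factorization of Proposition \ref{prop:Poisson_OK} so that $\s\circ\phi=\psi\circ\r$ holds at the level of these quotients. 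This is exactly the content of Proposition \ref{prop:Poisson_OK} applied to the Poisson morphism between the associated Poisson structures (after the $B$-field normalization), and its complexification; so the proof reduces to: (1) normalize by a real analytic $B$-field (Corollary \ref{cor:blank_up_to_B}, real analytic version), (2) apply Proposition \ref{prop:Poisson_OK} to the associated Poisson structures to get $\r$, $\s$, $\psi$ with $\s\circ\phi=\psi\circ\r$, (3) complexify and invoke Theorem \ref{thm:pbP} to identify $\r_{\C},\s_{\C}$ with the canonical Poisson quotient submersions and $\psi_{\C}$ as the desired complex analytic Poisson morphism. The routine but slightly delicate point left is checking that the real analytic category is preserved throughout, which follows because each cited construction (Weinstein's splitting, the form $B$, the submersions $\r,\s$) is built from solutions of ODEs with real analytic data and hence is real analytic.
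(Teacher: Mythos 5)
Your overall route --- reduce to the product normal form of Proposition \ref{prop:ogc_basic}, complexify, and read off the descent factor by factor --- is sound, and it is more explicit than the paper, which deduces the proposition from Theorem \ref{thm:pbP} without further detail. The first half of your argument is essentially the right proof: in the normal form $L=L\bigl(T^0M\oplus T^{1,0}M,{\rm i}\,\o\bigr)$ adapted to $\phi$ one has $T^{\C}M\cap L=\ker\ep=T^{1,0}M$, the complexified differential preserves these directions, and the descent follows. One point worth making explicit: the $B$-fields must be adapted to $\phi$, not merely to $L_M$ and $L_N$ separately. Even with both structures in normal form, a generalized complex linear map may send $T^{1,0}M$ into $(T^0N)^{\C}\oplus T^{1,0}N$ with nonzero $(T^0N)^{\C}$-component --- add to a product map any linear map from the complex factor of the source into the symplectic factor of the target; co-CR linearity and the Poisson condition are unaffected. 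So the descent really does need the $\phi$-adapted normalization of Proposition \ref{prop:ogc_basic} rather than Theorem \ref{thm:local_gcs} applied to each side, and your instinct to use it is correct.

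The genuine gap is in your ``resolution of the main obstacle''. There you identify the canonical Poisson quotient of $(L_M)_{\C}$ with ``$M$ modulo the complexified presymplectic foliation of $E_M\cap\overline{E_M}$'', and, for the complex factor, with a point; both identifications are wrong. By Corollary \ref{cor:pbP} and Theorem \ref{thm:pbP} the canonical Poisson quotient is the quotient of the complexification by $\ker\ep=T^{\C}M\cap L_M$, which in normal form is $T^{1,0}M$: a direction complementary to $E_M\cap\overline{E_M}=(T^0M)^{\C}$, and one that becomes an honest foliation only after complexifying $M$. For the complex factor the quotient is therefore a complex manifold of the same dimension, carrying the zero Poisson structure --- not a point --- and the descended map is $h\times\psi_{\C}$ rather than ``$\psi\times(\mathrm{pt\ to\ pt})$''. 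Consequently step (3) of your closing plan fails: the submersions $\r$ and $\s$ produced by Proposition \ref{prop:Poisson_OK} are real, their complexifications have fibres tangent to complexifications of real distributions, and they can never be the canonical quotient maps unless the complex factor is trivial; step (1) is also off, since Corollary \ref{cor:blank_up_to_B} would require a complex $B$ here and would destroy the symplectic part. The repair is to discard the closing three-step plan and finish with your first argument, correctly assembled.
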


\indent
Let $L(E,{\rm i}\,\ep)$ be a generalized complex structure in normal form on a Riemannian manifold $(M,g)$\,.\\
\indent
Then $E$ is coisotropic (that is, $E^{\perp}$ is isotropic), with respect to $g$\,, if and only if $E\cap\overline{E}$
is locally defined by pseudo-horizontally conformal submersions onto complex manifolds (a map from a Riemannian manifold
to an almost complex manifold is \emph{pseudo-horizontally conformal} if it pulls back $(1,0)$-forms to isotropic forms).\\
\indent
Also, if $\ep^k$ has constant norm, with respect to $g$\,, where $\dim(E\cap\overline{E})=2k$\,, then
the leaves of $E\cap\overline{E}$ are minimal submanifolds of $(M,g)$\,.\\
\indent
Conversely, we have the following:

\begin{exm} \label{exm:gc_harmorphs}
Let $\phi:(M,g)\to(N,J)$ be a pseudo-horizontally conformal submersion from a Riemannian manifold
onto an almost complex manifold, with $\dim M=\dim N+2$\,.\\
\indent
Denote $\V={\rm ker}\dif\!\phi$\,, $\H=\V^{\perp}$ and let $\o$ be the volume form of $\V$. Also, let
$F$ be the unique skew-adjoint almost $f$-structure on $M$ such that ${\rm ker}F=\V$ and, with respect
to which, $\phi$ is co-CR holomorphic. Obviously, $F$ and $\o$ are compatible; denote by $L$ the
corresponding generalized almost complex structure in normal form.\\
\indent
{}From Proposition \ref{prop:int_normal_form} it follows that $L$ is integrable if and only if
$J$ is integrable, the fibres of $\phi$ are minimal and the integrability tensor of $\H$ is of type $(1,1)$\,;
note that, if $\dim M=4$ then this is equivalent to the condition that $\phi$ is a harmonic morphism
(see \cite{BaiWoo2}\,), where $N$ is endowed with the conformal structure with respect to which
$J$ is a Hermitian structure.\\
\indent
Moreover, any generalized complex structure, in normal form, on a Riemannian manifold such that the
corresponding $f$-structure is skew-adjoint, the associated Poisson structure has rank two and
its symplectic form has norm $1$ is, locally, obtained this way.\\
\indent
The pseudo-horizontally conformal submersions with geodesic fibres onto complex manifolds,
for which the integrability tensor of the horizontal distribution is of type $(1,1)$\,,
admit a twistorial description from which it follows that they abound on Riemannian manifolds
of constant curvature \cite{Pan-tm} (cf.\ \cite{BaiWoo2}\,).\\
\indent
Also, see \cite{AprAprBri-IJM} for a study of the harmonic pseudo-horizontally conformal submersions
with minimal fibres and \cite{BaiWoo2} for twistorial constructions of harmonic morphisms with two-dimensional fibres
on four-dimensional Riemannian manifolds.
\end{exm}

\section{Generalized K\"ahler manifolds} \label{section:gKm}

\indent
We start this section by recalling from \cite{Gua-thesis} a few facts on generalized K\"ahler manifolds.\\
\indent
A \emph{generalized (almost) K\"ahler manifold} is a manifold $M$ endowed with two generalized (almost)
complex structures such that the corresponding sections $\J_1$ and $\J_2$ of ${\rm End}(TM\oplus T^*M)$
commute and $\J_1\J_2$ is negative definite.\\
\indent
Any generalized almost K\"ahler structure $(L_1,L_2)$ on a manifold $M$ corresponds to a quadruple
$(g,b,J_+,J_-)$ where $g$ is a Riemannian metric, $b$ is a two-form and $J_{\pm}$ are almost Hermitian
structures on $(M,g)$\,. The (bijective) correspondence is given by $L_1=L^+\oplus L^-$\,, $L_2=L^+\oplus\overline{L^-}$,
where
$$L^{\pm}=\bigl\{X+(b\pm g)(X)\,|\,X\in V^{\pm}\bigr\}$$
with $V^{\pm}$ the eigenbundles of $J_{\pm}$ corresponding to ${\rm i}$\,.\\
\indent
According to \cite[Theorem 6.28]{Gua-thesis}\,, the following assertions are equivalent:\\
\indent
\quad(i) $L_1$ and $L_2$ are integrable.\\
\indent
\quad(ii) $L_+$ and $L_-$ are integrable.\\
\indent
\quad(iii) $J_{\pm}$ are integrable and parallel with respect to $\nabla^{\pm}=\nabla^g\pm\tfrac12g^{-1}h$\,,
where $\nabla^g$ is the Levi-Civita connection of $g$ and $h=\dif\!b$ (equivalently, $J_{\pm}$ are integrable
and $\dif^c_{\pm}\!\o_{\pm}=\mp h$\,, where $\o_{\pm}$ are the K\"ahler forms of $J_{\pm}$).\\
\indent
Now, if we (pointwisely) denote $E_j=\p(L_j)$\,, $(j=1,2)$\,, then $E_1=V^++V^-$
and $E_2=V^++\overline{V^-}$. Hence, $E_1^{\perp}=V^+\cap V^-$, $E_2^{\perp}=V^+\cap\overline{V^-}$ and,
therefore, $E_1$ and $E_2$ are coisotropic.

\begin{rem} \label{rem:assocF_holo-functions}
Let $(M,L_1,L_2)$ be a generalized K\"ahler manifold.\\
\indent
1) The (skew-adjoint) almost $f$-structures $F_j$ determined by $E_j$ and $E_j^{\perp}$ are integrable;
we call $F_j$ \emph{the $f$-structures of $L_j$}\,, $(j=1,2)$\,.\\
\indent
2) The holomorphic functions of $(M,L_1)$ and $(M,L_2)$ are the bi-holomorphic functions of $(M,J_+,J_-)$ and $(M,J_+,-J_-)$\,,
respectively.
\end{rem}

\indent
Let $\H^{\pm}={\rm ker}(J_+\mp J_-)$\,. Then $\H^+$ and $\H^-$ are orthogonal; this follows from
$\H^+=\bigl(V^+\cap V^-\bigr)\oplus\overline{\bigl(V^+\cap V^-\bigr)}$ and
$\H^-=\bigl(V^+\cap\overline{V^-}\bigr)\oplus\overline{\bigl(V^+\cap\overline{V^-}\bigr)}$\,.
Denote $\V=\bigl(\H^+\oplus\H^-\bigr)^{\perp}$.\\
\indent
Note that, $\H^+$, $\H^-$ and $\V$ are invariant under $J_+$ and $J_-$\,.
Also, $J_+-J_-$ and $J_++J_-$ are invertible on $\V$.

\begin{prop} \label{prop:regular}
The following assertions are equivalent:\\
\indent
{\rm (i)} $L_1$ and $L_2$ are regular.\\
\indent
{\rm (ii)} $\H^+$ and $\H^-$ are distributions on $M$.\\
\indent
{\rm (iii)} $\V$ is a distribution on $M$.
\end{prop}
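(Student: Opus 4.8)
\indent
The plan is to translate all three assertions into statements about locally constant ranks, and then to exploit the pointwise orthogonal splitting $TM=\H^+\oplus\H^-\oplus\V$ together with the fact that, on $\H^+\oplus\H^-$, the product $J_+J_-$ is an involution.

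\indent
I would first prove (i)$\Longleftrightarrow$(ii). Since $\H^+=\ker(J_+-J_-)$ is the kernel of the smooth endomorphism $J_+-J_-$ of $TM$, it is a distribution if and only if its rank is locally constant; and by the identity $\H^+=(V^+\cap V^-)\oplus\overline{(V^+\cap V^-)}$ recalled above, $\rank\H^+=2\dim_{\C}(V^+\cap V^-)$. On the other hand $\p(L_1)=V^++V^-$, and from $\dim_{\C}(V^++V^-)+\dim_{\C}(V^+\cap V^-)=\dim_{\C}V^++\dim_{\C}V^-=\dim_{\C}T^{\C\!}M$, the rank of $\p(L_1)$ is locally constant if and only if that of $V^+\cap V^-$ is. Hence $L_1$ is regular (i.e.\ $\p(L_1)$ is a subbundle of $T^{\C\!}M$) if and only if $\H^+$ is a distribution; replacing $V^-$ by $\overline{V^-}$ throughout gives likewise that $L_2$ is regular if and only if $\H^-$ is a distribution, and taking the conjunction yields (i)$\Longleftrightarrow$(ii).

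\indent
The implication (ii)$\Longrightarrow$(iii) is then immediate: $g$ being positive definite and $\H^+$, $\H^-$ being orthogonal (so with trivial pointwise intersection), $TM=\H^+\oplus\H^-\oplus\V$ is a pointwise orthogonal direct sum, so if $\H^+$ and $\H^-$ are subbundles then so is $\H^+\oplus\H^-$, and therefore so is its orthogonal complement $\V$.

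\indent
For (iii)$\Longrightarrow$(ii), assume $\V$ is a distribution, so that $\V^{\perp}=\H^+\oplus\H^-$ is a smooth subbundle of $TM$. As $\H^+$ and $\H^-$ are invariant under both $J_+$ and $J_-$, the endomorphism $A:=J_+J_-$ preserves $\V^{\perp}$; moreover $J_+=J_-$ on $\H^+$ gives $A=J_+^2=-{\rm Id}$ there, while $J_+=-J_-$ on $\H^-$ gives $A=-J_+^2={\rm Id}$ there. Thus $A|_{\V^{\perp}}$ is a smooth bundle endomorphism with $(A|_{\V^{\perp}})^2={\rm Id}$ whose $(-1)$- and $(+1)$-eigenbundles are precisely $\H^+$ and $\H^-$; equivalently, $P:=\tfrac12({\rm Id}-A)|_{\V^{\perp}}$ is a smooth idempotent of $\V^{\perp}$ with image $\H^+$. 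Since the rank of an idempotent bundle endomorphism equals its trace, which is continuous and integer-valued and hence locally constant, $\H^+$ has locally constant rank and so is a distribution; by symmetry so is $\H^-$, which is (ii). The crux is precisely this last implication: without the observation that $J_+J_-$ restricts to an involution on $\H^+\oplus\H^-$, the hypothesis of (iii) only forces $\rank\H^++\rank\H^-$ to be locally constant, not each summand separately; everything else is routine vector-bundle linear algebra.
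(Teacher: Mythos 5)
Your proof is correct, and its overall architecture matches the paper's: both establish (i)$\Longleftrightarrow$(ii) by relating $E_j=\p(L_j)$ to $\H^{\pm}$, and then pass between (ii) and (iii) by a rank argument. For (i)$\Longleftrightarrow$(ii) the difference is cosmetic: you count dimensions via $\dim_{\C}(V^++V^-)+\dim_{\C}(V^+\cap V^-)=\dim_{\C}T^{\C\!}M$, whereas the paper records the identities $E_1\cap\overline{E_1}=\H^-\oplus\V=\bigl(\H^+\bigr)^{\perp}$ and $E_2\cap\overline{E_2}=\H^+\oplus\V=\bigl(\H^-\bigr)^{\perp}$; either way $L_1$ (resp.\ $L_2$) is regular exactly when $\H^+$ (resp.\ $\H^-$) is a distribution. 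The genuine divergence is in (iii)$\Longrightarrow$(ii). The paper's argument is one line: the dimensions of $\H^{\pm}$, being kernels of smooth bundle endomorphisms, are upper semicontinuous, so if their sum $\rank\V^{\perp}$ is locally constant then each summand is forced to be locally constant. You instead realize $\H^{\pm}$ as the eigenbundles of the smooth involution $J_+J_-|_{\V^{\perp}}$ and conclude via the locally constant trace of the corresponding idempotent. Both are valid; note, though, that your closing claim that without the involution one ``only'' gets local constancy of the sum is too strong --- upper semicontinuity of each summand already splits the sum, and that is exactly the paper's shortcut. What your version buys is a more structural conclusion (the projections onto $\H^{\pm}$ are manifestly smooth), at the cost of a slightly longer argument.
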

\begin{proof}
The obvious relations
\begin{equation*}
\begin{split}
E_1&=\bigl(V^+\cap V^-\bigr)^{\perp}=\bigl(V^+\cap V^-\bigr)\oplus\H^-\oplus\V\;,\\
E_2&=\bigl(V^+\cap\overline{V^-}\bigr)^{\perp}=\bigl(V^+\cap\overline{V^-}\bigr)\oplus\H^+\oplus\V\\
\end{split}
\end{equation*}
imply
\begin{equation*}
\begin{split}
E_1\cap\overline{E_1}&=\H^-\oplus\V=\bigl(\H^+\bigr)^{\perp}\;,\\
E_2\cap\overline{E_2}&=\H^+\oplus\V=\bigl(\H^-\bigr)^{\perp}
\end{split}
\end{equation*}
which show that (i)$\Longleftrightarrow$(ii)\,.\\
\indent
Also, as the dimensions of $\H^+$ and $\H^-$ are upper semicontinuous functions on $M$, assertion (ii)
holds if and only if $\H^+\oplus\H^-\bigl(=\V^{\perp}\bigr)$ is a distribution on $M$.
\end{proof}

\indent
Next, we prove the following result.

\begin{thm} \label{thm:H+geod}
Let $(M,L_1,L_2)$ be a generalized K\"ahler manifold with $L_1$ regular.\\
\indent
Then the following assertions are equivalent:\\
\indent
\quad{\rm (i)} $\H^+$ is integrable.\\
\indent
\quad{\rm (ii)} $\H^+$ is geodesic.\\
\indent
Furthermore, if\/ {\rm (i)} or {\rm (ii)} holds then the leaves of $\H^+$, endowed with $(g,J_{\pm})$\,, are K\"ahler manifolds.\\
\indent
Also, if $\H^+$ is holomorphic, with respect to $J_+$ or $J_-$\,, then both {\rm (i)} and {\rm (ii)} hold.
\end{thm}

\indent
To prove Theorem \ref{thm:H+geod} we need some preparations.\\
\indent
Let $\H$ be a distribution on a Riemannian manifold $(M,g)$ endowed with a linear connection $\nabla$;
denote $\V=\H^{\perp}$.\\
\indent
The \emph{second fundamental form} of $\H$, with respect to $\nabla$,
is the $\V$-valued symmetric two-form $\Bh$ on $\H$ defined by
$\Bh(X,Y)=\tfrac12\,\V\bigl(\nabla_XY+\nabla_YX\bigr)$\,; then $\H$ is geodesic, with respect to $\nabla$,
if and only if $\Bh=0$ (cf.\ \cite{BaiWoo2}\,).\\
\indent
The next result follows from a straightforward calculation.

\begin{lem}[cf.\ \cite{Wat-Hsubm}\,] \label{lem:Watson}
Let $(M,g,J)$ be a Hermitian manifold endowed with a distribution $\H$ and a conformal connection $\nabla$ such that
$\nabla J=0$\,.\\
\indent
If\/ $\V$ is integrable and $J$-invariant then the following relation holds:
\begin{equation*}
2\,g\bigl(\Bh(JX,Y),V\bigr)+g\bigl(I^{\H}(X,Y),JV\bigr)=g\bigl(T(V,JX),Y\bigr)+g\bigl(T(V,X),JY\bigr)\;,
\end{equation*}
for any $X,Y\in\H$ and $V\in\V$, where $T$ is the torsion of $\nabla$ and $I^{\H}$ is the
integrability tensor of $\H$, defined by $I^{\H}(X,Y)=-\V[X,Y]$\,, for any sections $X$ and $Y$\,of $\H$.
\end{lem}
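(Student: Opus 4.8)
The plan is to prove the identity by a direct computation, with $X,Y$ arbitrary local sections of $\H$ and $V$ an arbitrary local section of $\V$, expanding every term through the definitions of $\Bh$, $I^{\H}$ and the torsion $T$ of $\nabla$. The ingredients at hand are: $\nabla J=0$, which moves $J$ across $\nabla$ (so $\nabla_Z(JW)=J\nabla_ZW$); the fact that $g$ is Hermitian, so $J$ is skew-adjoint for $g$ and may be reflected across it; the conformality of $\nabla$, which gives $\nabla_Zg=\theta(Z)\,g$ for some one-form $\theta$, hence $g(\nabla_ZW,V)=-g(W,\nabla_ZV)$ whenever $W$ is a section of $\H$ and $V$ a section of $\V$ (here the conformal factor drops out because $g(W,V)\equiv0$); and the hypotheses on $\V$. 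I note at the outset that, since $\V=\H^{\perp}$ is $J$-invariant and $J$ is $g$-orthogonal, $\H$ is $J$-invariant as well, so all of $JX,JY,JV$ stay in the relevant subbundle.

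First I would expand the left-hand side: $2\,g(\Bh(JX,Y),V)=g(\nabla_{JX}Y,V)+g(\nabla_Y(JX),V)$ and $g(I^{\H}(X,Y),JV)=-g([X,Y],JV)$ (using $JV\in\V$); writing $[X,Y]=\nabla_XY-\nabla_YX-T(X,Y)$ and using $\nabla_Y(JX)=J\nabla_YX$ with the skew-adjointness of $J$, one cancellation reduces the left-hand side to $g(\nabla_{JX}Y,V)+g(\nabla_X(JY),V)+g(T(X,Y),JV)$. Expanding the right-hand side likewise, from $T(V,JX)=J\nabla_VX-\nabla_{JX}V-[V,JX]$ and $T(V,X)=\nabla_VX-\nabla_XV-[V,X]$, the $\nabla_VX$ contributions cancel after moving $J$, and the conformality identity converts $\nabla_{JX}V$, $\nabla_XV$ into $\nabla_{JX}Y$, $\nabla_X(JY)$ paired with $V$; the whole statement thus collapses to the single identity
\begin{equation*}
g(T(X,Y),JV)+g([V,JX],Y)+g([V,X],JY)=0\;,
\end{equation*}
equivalently $g(T(X,Y),JV)+g\bigl((\mathcal{L}_VJ)(X),Y\bigr)=0$, since $(\mathcal{L}_VJ)(X)=[V,JX]-J[V,X]$ and $J$ is skew-adjoint.

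This last identity is where the integrability of $\V$ genuinely enters; the hypotheses $\nabla J=0$, $g$ Hermitian and $\nabla$ conformal license only the formal rearrangements above, and without integrability the conclusion fails (for the flat connection on a Hermitian vector space the left-hand side of the asserted formula need not vanish when $\V$ is a non-integrable $J$-invariant distribution). Since all three terms are tensorial in $X,Y,V$, I would localise, choose a submersion $\pi$ with $\ker\dif\!\pi=\V$ (available by Frobenius), and take the extensions of $X$ and $Y$ to be the horizontal parts of $\pi$-projectable vector fields. Then $[V,\V Z]$ is a section of $\V$ by Frobenius, while $[V,Z]$ is $\pi$-related to $0$ (since $V$ is, and $Z$ is projectable), hence vertical; so $[V,\H Z]=[V,Z]-[V,\V Z]$ is vertical, which kills $g([V,X],JY)$. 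For the surviving term $g([V,JX],Y)$ I would write $[V,JX]=J\nabla_VX-\nabla_{JX}V-T(V,JX)$, separate vertical and horizontal parts, and use the integrability of $\V$ once more in the form that the $\H$-valued tensor $(V,W)\mapsto\H(\nabla_VW)$ on $\V$ is symmetric modulo $T$ (because $\H[V,W]=0$ for sections $V,W$ of $\V$), which matches the surviving contribution with $-g(T(X,Y),JV)$. The main obstacle is exactly this last step, together with keeping the signs and the vertical/horizontal bookkeeping straight throughout.
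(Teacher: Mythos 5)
Your reduction is correct and is, as far as one can tell, the only checkable part of the paper's own argument (the lemma is introduced there with nothing more than ``follows from a straightforward calculation''). Expanding $\Bh$ and $I^{\H}$, writing each bracket as $\nabla_AB-\nabla_BA-T(A,B)$, and using $\nabla J=0$, the skew-adjointness of $J$ and the conformality identity $g(\nabla_ZV,Y)=-g(\nabla_ZY,V)$ does collapse the asserted formula to
\begin{equation*}
g\bigl(T(X,Y),JV\bigr)+g\bigl([V,JX],Y\bigr)+g\bigl([V,X],JY\bigr)=0\,,
\end{equation*}
i.e.\ $g\bigl(T(X,Y),JV\bigr)+g\bigl((\Lie_V\!J)(X),Y\bigr)=0$.

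The gap is in your treatment of this residual identity, and it is a genuine one. First, the proposed mechanism is circular and misdirected: re-expanding $[V,JX]=J\nabla_VX-\nabla_{JX}V-T(V,JX)$ reintroduces the term $T(V,JX)$ that the reduction just eliminated, and the tensor $(V,W)\mapsto\H(\nabla_VW)$ on $\V\times\V$ that you invoke never occurs, since every covariant derivative in play has at least one horizontal argument. Second, and decisively, the residual identity is not a formal consequence of the properties you actually use ($\nabla J=0$, conformality, $g$ Hermitian, $J$ integrable, $\V$ integrable and $J$-invariant). Take $M=\C^2$ with the standard flat K\"ahler structure, $\V$ and $\H$ the two coordinate complex lines, and $\nabla=\dif+f\otimes S$ with $f$ a one-form and $S\in\mathfrak{u}(2)$ interchanging $\H$ and $\V$: then $\nabla g=0$, $\nabla J=0$, all the hypotheses hold, all brackets of constant fields vanish (so $\Lie_V\!J=0$ and your projectable-extension step is vacuously satisfied), yet $g\bigl(T(X,Y),JV\bigr)=f(X)\,g(SY,JV)-f(Y)\,g(SX,JV)$ is generically nonzero. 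Hence no manipulation from those hypotheses alone can close the argument; the missing input is a compatibility between the torsion and $\o$ of the kind that holds for the connections $\nabla^{\pm}$ to which the paper applies the lemma, whose torsion is the three-form $\pm\dif\!b=\mp\dif^c_{\pm}\!\o_{\pm}$. Your proof never uses any property of $T$ beyond its being the torsion of some conformal connection preserving $J$, and the example shows this is not enough.
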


\indent
To prove Theorem \ref{thm:H+geod} we also need the following lemma.

\begin{lem} \label{lem:holoH}
Let $(M,J)$ be a complex manifold and let $\H$ be a holomorphic distribution on $(M,J)$\,.
The following assertions are equivalent:\\
\indent
{\rm (i)} $\H$ is integrable.\\
\indent
{\rm (ii)} $\H^{1,0}$ is a CR structure.
\end{lem}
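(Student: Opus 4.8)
The plan is to prove the equivalence by unravelling the definitions of all three notions in terms of the Lie bracket on sections. Recall that $\H$ being integrable means its space of sections is closed under $[\cdot,\cdot]$; that $\H^{1,0}$ being a CR structure means the same for the $(1,0)$-part $\H^{1,0}\subseteq T^{1,0}M$; and that $\H$ being a holomorphic distribution on $(M,J)$ means, by definition, that it is preserved by the local holomorphic flows, equivalently that $[\Gamma(T^{1,0}M),\Gamma(\H^{1,0})]\subseteq\Gamma(\H^{1,0})$ together with the corresponding condition read off from the $(0,1)$-side. The key structural input is the decomposition of the complexification $\H^{\C}=\H^{1,0}\oplus\H^{0,1}$, where $\H^{0,1}=\overline{\H^{1,0}}$, which holds precisely because $\H$ is $J$-invariant.

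First I would observe that, since $\H$ is $J$-invariant, integrability of $\H$ is equivalent to integrability of $\H^{\C}$, i.e.\ to $[\Gamma(\H^{\C}),\Gamma(\H^{\C})]\subseteq\Gamma(\H^{\C})$. Splitting a section of $\H^{\C}$ into its $(1,0)$ and $(0,1)$ parts, this closure condition decomposes (using that $TM$ is integrable, so that brackets of $(1,0)$ fields are $(1,0)$ and brackets of $(0,1)$ fields are $(0,1)$) into three pieces: closure of $\Gamma(\H^{1,0})$ under bracket; closure of $\Gamma(\H^{0,1})$ under bracket (the conjugate of the first); and the mixed condition $[\Gamma(\H^{1,0}),\Gamma(\H^{0,1})]\subseteq\Gamma(\H^{\C})$. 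For the implication (i)$\Rightarrow$(ii): the first piece is exactly that $\H^{1,0}$ is a CR structure; and I would argue that the full integrability of $\H$ forces $\H$ to be holomorphic, by noting that a section of $T^{1,0}M$ that lies in $\H^{\C}$ automatically lies in $\H^{1,0}$ (by type), so the bracket of a $(1,0)$ field with a section of $\H^{1,0}$, being both of type $(1,0)$ and a section of $\H^{\C}$, is a section of $\H^{1,0}$.

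For (ii)$\Rightarrow$(i): given that $\H^{1,0}$ is a CR structure and $\H$ is holomorphic, I would recover the three pieces above. Closure of $\Gamma(\H^{1,0})$ is the CR condition; closure of $\Gamma(\H^{0,1})$ is its conjugate; and the mixed bracket $[\Gamma(\H^{1,0}),\Gamma(\H^{0,1})]$ lands in $\Gamma(\H^{\C})$ because holomorphicity of $\H$ gives $[\Gamma(T^{1,0}M),\Gamma(\H^{0,1})]\subseteq\Gamma(\H^{0,1})$ (equivalently, taking conjugates, $[\Gamma(T^{0,1}M),\Gamma(\H^{1,0})]\subseteq\Gamma(\H^{1,0})$), and $\Gamma(\H^{1,0})\subseteq\Gamma(T^{1,0}M)$. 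Summing the three pieces gives $[\Gamma(\H^{\C}),\Gamma(\H^{\C})]\subseteq\Gamma(\H^{\C})$, hence integrability of $\H$.

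The step I expect to be the main obstacle is pinning down the precise meaning of ``holomorphic distribution'' and showing the bracket-theoretic reformulation I am using is the correct one: one must check that ``$\H$ is invariant under the local flows of holomorphic vector fields'' is equivalent to the infinitesimal statement $[\Gamma(T^{1,0}M),\Gamma(\H^{1,0})]\subseteq\Gamma(\H^{1,0})$ (and its conjugate), which is a standard but slightly delicate Lie-derivative computation on a complex manifold; once this dictionary is in place the rest is a routine type decomposition. I would therefore state this reformulation at the outset (citing the convention fixed earlier for holomorphic distributions) and then carry out the three-piece bracket bookkeeping described above.
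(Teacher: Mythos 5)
Your overall strategy --- decompose integrability of $\H^{\C}=\H^{1,0}\oplus\H^{0,1}$ into closure of $\H^{1,0}$, closure of $\H^{0,1}$, and the mixed brackets --- is the same as the paper's, but the dictionary you set up for ``holomorphic distribution'' is not the right one, and this breaks the argument. The infinitesimal condition you propose, $[\Gamma(T^{0,1}M),\Gamma(\H^{1,0})]\subseteq\Gamma(\H^{1,0})$ (or the version with $T^{1,0}M$), is essentially never satisfied: the bracket of a $(0,1)$ field with a $(1,0)$ field has in general a nonzero $(0,1)$ component, so your condition fails already for $\H=TM$. The characterization the paper actually uses ($\overline{\partial}$-stability of the subbundle $\H^{1,0}\subseteq T^{1,0}M$) is that $[X,Z]$ is a section of $\H^{1,0}\oplus T^{0,1}M$ for all sections $X$ of $\H^{1,0}$ and $Z$ of $T^{0,1}M$; only the $(1,0)$ part of the bracket is required to stay in $\H^{1,0}$. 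With this correction your (ii)$\Rightarrow$(i) bookkeeping does go through: for $Z$ a section of $\H^{0,1}\subseteq T^{0,1}M$ this condition controls $[X,Z]^{1,0}$, its conjugate controls $[X,Z]^{0,1}$, and together with the CR condition and its conjugate one gets closure of $\H^{\C}$.

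The more serious gap is in (i)$\Rightarrow$(ii). Integrability of $\H$ only constrains brackets of two sections of $\H^{\C}$; your sentence ``the bracket of a $(1,0)$ field with a section of $\H^{1,0}$, being both of type $(1,0)$ and a section of $\H^{\C}$\dots'' is valid only when the $(1,0)$ field itself lies in $\H^{\C}$, hence in $\H^{1,0}$, and then it only reproduces the CR condition. Holomorphicity requires controlling $[X,Z]$ for \emph{every} section $Z$ of $T^{0,1}M$, not merely those in $\H^{0,1}$, and no type bookkeeping bridges that quantifier. Indeed, on $\{|z|<1\}\times\C\subseteq\C^2$ let $\H^{1,0}$ be spanned by $V=\partial_z+f\,\partial_w$ with $f=(\bar w-\bar z w)/(1-z\bar z)$: this is the tangent distribution of the foliation by the complex lines $w=t+\bar t\,z$ $(t\in\C)$, one checks $[V,\overline{V}]=0$, so $\H$ is integrable and $\H^{1,0}$ is a CR structure, yet $[V,\partial_{\bar w}]=-(1-z\bar z)^{-1}\partial_w$ is not a section of $\H^{1,0}\oplus T^{0,1}M$, so $\H$ is not a holomorphic distribution. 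Thus this direction cannot be extracted from the bracket decomposition alone (the paper's own one-line proof is open to the same objection; in its applications, e.g.\ Theorem \ref{thm:H+geod}, additional structure is available), and your write-up would need to identify and use whatever extra hypothesis makes it valid.
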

\begin{proof}
This is obvious.
\end{proof}

\begin{proof}[Proof of Theorem \ref{thm:H+geod}]
We may assume that, also, $L_2$ is regular.\\
\indent
Obviously, the second fundamental form of $\H^+$, with respect to $\nabla^g$, is equal
to the second fundamental forms of $\H^+$, with respect to $\nabla^{\pm}$.\\
\indent
As $L_1$ and $L_2$ are integrable we have that $E_1$ and $E_2$ are integrable and, consequently,
$\H^+\oplus\V$ and $\H^-\oplus\V$ are integrable; in particular, the integrability tensor of $\H^+$ takes
values in $\V$. Furthermore, $\H^+\oplus\V$ and $\H^-\oplus\V$ are holomorphic with respect to both $J_+$ and $J_-$\,.\\
\indent
Now, by applying Lemma \ref{lem:Watson} to $\H=\H^+$ twice, with respect to $\nabla^+$ and $\nabla^-$, we quickly obtain
$$4\,g\bigl(B^{\H^+}\!(J_{\pm}X,Y),V\bigr)=-g\bigl(I^{\H^+}\!(X,Y),(J_++J_-)(V)\bigr)\;,$$
for any $X,Y\in\H^+$ and $V\in\H_-\oplus\V$. As $J_++J_-$ is invertible on $\V$, we obtain that
(i)$\Longleftrightarrow$(ii)\,.\\
\indent
If $\H^+$ is integrable then $(g,b,J_+,J_-)$ induces, by restriction,
a generalized K\"ahler structure on each leaf $L$ of $\H^+$ and $J_+=J_-$ on $L$\,.\\
\indent
If $\H^{\pm}$ is holomorphic with respect to $J_+$ or $J_-$ then $\H^+$ is integrable by Lemma \ref{lem:holoH}
and the fact that the eigenbundles of $J_{\pm}|_{\H^+}$ corresponding to ${\rm i}$ are equal to $V^+\cap V^-$ which
is integrable.\\
\end{proof}

\begin{rem} \label{rem:Riem_subm_descend}
1) Let $(M,L_1,L_2)$ be a generalized K\"ahler manifold with $L_1$ regular. If $\H^+$ is integrable then,
by Theorem \ref{thm:H+geod}\,, the co-CR structure associated to $L_1$ (that is, $E_1$) is, locally, given
by holomorphic Riemannian submersions from $(M,g,J_{\pm})$ onto K\"ahler manifolds $(P,h,J)$\,; in particular, the
leaves of $\H^+$, endowed with $(g,J_{\pm})$ can be, locally, identified with $(P,h,J)$\,.\\
\indent
2) If $(M,L_1^M,L_2^M)$ and $(N,L_1^N,L_2^N)$ are generalized K\"ahler manifolds with
$\H^+_M$ and $\H^+_N$ integrable distributions then any holomorphic map $\phi:(M,L_1^M)\to(N,L_1^N)$
descends, locally (with respect to the Riemannian submersions of Remark \ref{rem:Riem_subm_descend}(1)\,),
to a holomorphic map between K\"ahler manifolds.
\end{rem}

\indent
Let $(M_j,g_j,J_j)$ be K\"ahler manifolds, $(j=1,2)$\,. Then on $M_1\times M_2$ there are
two nonequivalent natural generalized K\"ahler structures: the \emph{first product} is just the
K\"ahler product structure whilst the \emph{second product} is given by
$L_1=L\bigl(T^{1,0}M_1\times TM_2,{\rm i}\,\o_2\bigr)$
and $L_2=L\bigl(T^{1,0}M_2\times TM_1,{\rm i}\,\o_1\bigr)$\,, where $\o_j$ are the K\"ahler forms
of $J_j$\,, $(j=1,2)$\,; see Section \ref{section:gK_tamed}\,, below, for the corresponding definitions in a more general setting.
Note that, both $L_1$ and $L_2$ are in normal form; moreover, the corresponding almost $f$-structures are skew-adjoint
(and, thus, unique with this property).\\
\indent
We end this section with the following consequence of Theorem \ref{thm:H+geod} (cf.\ \cite[Theorem A]{ApoGua}\,).

\begin{cor} \label{cor:H+-}
Any generalized K\"ahler manifold with $\V=0$ is, up to a unique $B$-field transformation, locally given by the
second product of two K\"ahler manifolds.
\end{cor}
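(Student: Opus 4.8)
The plan is to reduce to the structure theory already developed. Since $\V=0$ by hypothesis, we have $TM=\H^+\oplus\H^-$, and in particular (by Proposition \ref{prop:regular}) both $L_1$ and $L_2$ are regular, so Theorem \ref{thm:H+geod} applies to $L_1$ and, by the symmetric roles of $\H^+$ and $\H^-$ (interchanging $J_-$ with $-J_-$, i.e.\ $L_1$ with $L_2$), also to $\H^-$. First I would observe that $\H^+=(\H^-)^{\perp}$ and $\H^-=(\H^+)^{\perp}$ are each other's orthogonal complements; so to see that both are integrable it suffices to prove one of them is, say $\H^+$. For this I would use the integrability of $L_1$ and $L_2$: as noted in the text, $E_1$ and $E_2$ are integrable, hence $\H^-\oplus\V=\H^-$ and $\H^+\oplus\V=\H^+$ are integrable distributions. (When $\V=0$ the distributions $E_1\cap\overline{E_1}$ and $E_2\cap\overline{E_2}$ are just $\H^-$ and $\H^+$ respectively, and these are integrable because the Courant-integrable Dirac structures $E_j\oplus\mathrm{Ann}(E_j)$ restrict accordingly.) Thus hypotheses (i) of Theorem \ref{thm:H+geod} hold for both $\H^+$ and $\H^-$, giving that each is a holomorphic, geodesic foliation with respect to $J_{\pm}$ whose leaves are K\"ahler.

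Next I would set up the local product. Since $\H^+$ and $\H^-$ are complementary, orthogonal, integrable and geodesic (totally geodesic) distributions on $(M,g)$, by the de~Rham-type local splitting for a metric with two complementary totally geodesic foliations, $(M,g)$ is locally a Riemannian product $(M_1,g_1)\times(M_2,g_2)$ with $TM_1=\H^+$, $TM_2=\H^-$. On $M_1$ we have $J_+=J_-$ (the leaves of $\H^+=\ker(J_+-J_-)$), and this common restriction $J_1:=J_+|_{\H^+}=J_-|_{\H^+}$ is a K\"ahler structure on $(M_1,g_1)$ by the last assertion of Theorem \ref{thm:H+geod}; symmetrically, on $M_2$ we have $J_+=-J_-$, and $J_2:=J_+|_{\H^-}$ is K\"ahler on $(M_2,g_2)$. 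So $J_+$ acts as $(J_1,J_2)$ and $J_-$ acts as $(J_1,-J_2)$ under the splitting $TM=\H^+\oplus\H^-$.

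Then I would identify the generalized K\"ahler structure with the second product. Writing $V^{\pm}$ for the $+{\rm i}$-eigenbundles of $J_{\pm}$, the splitting gives $V^+=T^{1,0}M_1\oplus T^{1,0}M_2$ and $V^-=T^{1,0}M_1\oplus T^{0,1}M_2$, hence $V^+\cap V^-=T^{1,0}M_1$, $V^+\cap\overline{V^-}=T^{1,0}M_2$, and $E_1=\p(L_1)=V^++V^-=T^{1,0}M_1\oplus TM_2$, $E_2=T^{1,0}M_2\oplus TM_1$. Using $L^{\pm}=\{X+(b\pm g)(X): X\in V^{\pm}\}$ and $L_1=L^+\oplus L^-$, $L_2=L^+\oplus\overline{L^-}$, a direct computation shows $L_1=L(T^{1,0}M_1\times TM_2,\,\beta_1)$ and $L_2=L(T^{1,0}M_2\times TM_1,\,\beta_2)$ for suitable two-forms $\beta_j$ built from $b$ and the K\"ahler forms $\o_j$ of $J_j$; more precisely $\beta_1$ agrees with ${\rm i}\,\o_2$ on $TM_2$ modulo the (closed) contribution of $b$, and likewise for $\beta_2$. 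Applying the $B$-field transformation $\exp(B)$ with $B=-b$ (which is a legitimate $B$-field transformation since $h=\dif\! b$ is closed, and in fact one checks $b$ is closed here because $\dif^c_{\pm}\o_{\pm}=\mp h$ forces $h=0$ on a product of K\"ahler pieces where $\o_{\pm}$ are closed and of type $(1,1)$) removes the $b$-terms and puts $(L_1,L_2)$ into exactly the second-product form $L_1=L(T^{1,0}M_1\times TM_2,{\rm i}\,\o_2)$, $L_2=L(T^{1,0}M_2\times TM_1,{\rm i}\,\o_1)$. Uniqueness of $B$: since both $L_1$ and $L_2$ are already in normal form with skew-adjoint $f$-structures after the transformation, and a skew-adjoint compatible $f$-structure is unique, Proposition \ref{prop:linear_normal_form} (applied pointwise, together with the normalization) pins down $B$ uniquely.

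The main obstacle I anticipate is the bookkeeping in the last paragraph: correctly tracking how the off-diagonal $b$-components distribute between $E_1$ and $E_2$ and verifying that a single global (local) $B$-field transformation simultaneously normalizes both $L_1$ and $L_2$; showing $b$ is itself closed (not merely that $\dif\! b$ has the right type) is the delicate point, and it is what makes the $B$-field transformation well defined and its uniqueness meaningful.
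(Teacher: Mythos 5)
Your overall skeleton coincides with the paper's: regularity via Proposition \ref{prop:regular}, integrability of $\H^{\pm}$ from that of $E_1,E_2$, then Theorem \ref{thm:H+geod} to get complementary orthogonal geodesic holomorphic foliations with K\"ahler leaves, and finally the identification with the second product after killing $b$ by the (necessarily unique) $B$-field $\exp(-b)$. Where you genuinely diverge is at the one step you correctly single out as delicate, namely $\dif b=0$. The paper proves this \emph{before} invoking any product decomposition, by a direct computation: it writes $\ep_2=(b-{\rm i}\,\e)|_{E_2}$, uses the integrability of $L_2$ (i.e.\ $\dif\ep_2=0$ on $E_2$) together with $\Lie_X\e=0$ to get $(\dif b)(X,Y,\overline{Z})=0$ for $X\in V^+\cap\H^-$, $Y,Z\in V^+\cap\H^+$, and then applies Lemma \ref{lem:Watson} once more to kill the remaining components. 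You instead first build the full holomorphic Riemannian product and then conclude $h=\mp\dif^c_{\pm}\o_{\pm}=0$ from closedness of $\o_{\pm}$; this is a cleaner-looking route, but it quietly assumes more than Theorem \ref{thm:H+geod} literally gives you. The last assertion of that theorem only says the \emph{leaves} of $\H^{\pm}$ are K\"ahler; to know that $\o_{\pm}$ are closed on all of $M$ (equivalently, that $J_{\pm}$ and $g$ are genuine product structures, so that the mixed components $(\dif\o_{\pm})(X,Y,V)$ with $X,Y\in\H^+$, $V\in\H^-$ vanish) you need the restrictions $g|_{\H^{\pm}}$ and $J_{\pm}|_{\H^{\pm}}$ to be \emph{basic} with respect to the complementary foliation. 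A short computation shows $(\Lie_V\o_+|_{\H^+})(X,Y)=g((\nabla^g_VJ_+)X,Y)$, which involves exactly the components of $h$ you are trying to show vanish, so the basicness is not free and must instead be extracted from the holomorphicity of $\H^{\mp}$ with respect to $J_{\pm}$ and the de~Rham parallelism (this is in effect Remark \ref{rem:Riem_subm_descend}(1): $E_1$ is locally given by holomorphic Riemannian submersions onto K\"ahler manifolds, and symmetrically for $E_2$). With that step made explicit your argument closes, and there is no circularity; but as written, ``$\o_{\pm}$ are closed on a product of K\"ahler pieces'' is asserted rather than proved, and it is precisely the point the paper's $\ep_2$/Watson computation is designed to settle.
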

\begin{proof}
Let $(M,L_1,L_2)$ be a generalized K\"ahler manifold with $\V=0$\,.
Then, Proposition \ref{prop:regular} implies that $\H^{\pm}$ are complementary orthogonal distributions on $M$.\\
\indent
As $L_1$ and $L_2$ are integrable, we have $\H^{\pm}$ integrable. Furthermore, by (the proof of) Theorem \ref{thm:H+geod}\,,
we have that $\H^{\pm}$ are geodesic foliations which are holomorphic with
respect to both $J_{\pm}$\,; moreover, $(g,J_{\pm})$ induce, by restriction, K\"ahler structures on their leaves.\\
\indent
If $L_2=L(E_2,\ep_2)$ then, from the definitions it follows that $\ep_2=(b-{\rm i}\,\e)|_{E_2}$\,,
where $\e$ is the two-form on $M$ characterised by $\iota_X\e=0$ if $X\in\H_-$ and $\e|_{\H^+}$ is the K\"ahler form
of $J_+|_{\H^+}$. As $(\Lie_X\!\e)(Y,Z)=0$ for any sections $X$ of $\H^-$ and $Y$, $Z$ of $\H^+$, and
$(\dif\!\ep_2)(X,Y,Z)=0$ for any $X,Y,Z\in E_2$\,, we obtain that
$(\dif\!b)(X,Y,\overline{Z})=0$ for any $X\in V^+\cap\H^-(=E_2\cap\H^-)$ and $Y,Z\in V^+\cap\H^+$.
Furthermore, from Lemma \ref{lem:Watson}\,, applied to $\H=\H^+$ with $J=J_+$ and
$\nabla=\nabla^+$, we obtain $(\dif\!b)(X,Y,Z)=0$ for any $X\in\H^-$ and $Y,Z\in V^+\cap\H^+$.\\
\indent
It follows that $\dif\!b=0$ and the proof is complete.
\end{proof}

\section{Tamed symplectic and generalized K\"ahler manifolds} \label{section:gK_tamed}

\indent
The following definition is fairly standard.

\begin{defn} \label{defn:tamed_symp}
A \emph{tamed almost symplectic manifold} is a manifold $M$ endowed with a nondegenerate
two-form $\ep$ and an almost complex structure $J$ such that $\ep(JX,X)>0$ for any nonzero $X\in TM$.\\
\indent
\indent
A \emph{tamed symplectic manifold} is a tamed almost symplectic manifold $(M,\ep,J)$ such that
$J$ and $\ep^{-1}J^*\ep$ are integrable and $\dif\!\ep=0$\,.
\end{defn}

\indent
Obviously, $(M,\ep,J)$ is a tamed symplectic manifold if and only if $\ep$ is a symplectic form,
$T^{1,0}M$ and $\bigl(T^{1,0}M\bigr)^{\perp_{\ep}}$ are integrable, and $\ep(JX,X)>0$\,, for any nonzero $X\in TM$.\\
\indent
The next result also appears, in a different form, in \cite{Gua-Pbranes}\,.

\begin{thm} \label{thm:gK_tamed}
Let $M$ be a manifold endowed with a nondegenerate two-form $\ep$ and an almost complex structure $J$;
denote $J_+=J$ and $J_-=-\ep^{-1}J^*\ep$\,. Let $g$ and\/ $b$ be the symmetric and skew-symmetric parts, respectively,
of\/ $\ep J$. \\
\indent
Then the following assertions are equivalent:\\
\indent
\quad{\rm (i)} $(M,\ep,J)$ is a tamed symplectic manifold.\\
\indent
\quad{\rm (ii)} $(g,b,J_+,J_-)$ defines a generalized K\"ahler structure such that $J_++J_-$ is invertible.\\
\indent
Moreover, up to a unique $B$-field transformation, any generalized K\"ahler structure, on $M$,
with $J_++J_-$ invertible is obtained this way from a tamed symplectic structure.
\end{thm}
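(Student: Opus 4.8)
The plan is to establish the equivalence (i)$\Longleftrightarrow$(ii) by passing to the bi-Hermitian data and invoking the characterisation of generalized K\"ahler structures in terms of $(g,b,J_+,J_-)$ recalled at the start of Section \ref{section:gKm}. First I would record the elementary linear-algebra facts: given that $\ep$ is nondegenerate with $\ep(JX,X)>0$, the symmetric part $g$ of $\ep J$ is positive definite, $J_+=J$ is $g$-orthogonal by construction, and one checks directly that $J_-=-\ep^{-1}J^*\ep$ is also $g$-orthogonal and that $b$ (the skew part of $\ep J$) satisfies $b=\tfrac12(\ep J+ \text{its transpose with a sign})$, so that $\ep=g+b$ after identifying forms with maps $TM\to T^*M$; equivalently $\ep=(b+g)$ on $V^+$ in the notation of the correspondence $L^{\pm}=\{X+(b\pm g)(X)\}$. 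The key pointwise identity to verify is that the two generalized almost complex structures $\J_1,\J_2$ built from $(g,b,J_+,J_-)$ are exactly the ones whose $+{\rm i}$-eigenbundles are $L^+\oplus L^-$ and $L^+\oplus\overline{L^-}$, and that $J_++J_-$ invertible is equivalent to $\H^+=\ker(J_+-J_-)=0$, i.e.\ to $V^+\cap V^-=0$; this last is where the hypothesis $\ep(JX,X)>0$ enters, since it forces $\ep|_{V^+\cap\overline{V^-}}$-type nondegeneracies.

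For the integrability half, I would use the third characterisation in the recalled theorem of \cite{Gua-thesis}: $(L_1,L_2)$ integrable $\Longleftrightarrow$ $J_\pm$ integrable and $\dif^c_{\pm}\o_\pm=\mp h$ with $h=\dif\!b$. On the tamed side we are given $J=J_+$ integrable, $J_-=-\ep^{-1}J^*\ep$ integrable, and $\dif\!\ep=0$. The content is to translate $\dif\!\ep=0$, together with the two integrability conditions, into the pair of identities $\dif^c_{\pm}\o_\pm=\mp\dif\!b$. Since $\ep=g+b$ and $\o_\pm(\cdot,\cdot)=g(J_\pm\cdot,\cdot)$, one expresses $\o_+$ and $\o_-$ in terms of $\ep$ and its $J$-type decomposition (using that $\ep(JX,X)>0$ guarantees the relevant nondegeneracies), and then $\dif\!\ep=0$ splits, according to $J_+$-bidegree, into exactly the conditions that make $T^{1,0}M$ and $(T^{1,0}M)^{\perp_\ep}$ integrable and that pin down $\dif^c_+\o_+$; a symmetric computation with respect to $J_-$ handles $\dif^c_-\o_-$. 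The reverse implication (ii)$\Longrightarrow$(i) reverses this: from $\J_1,\J_2$ integrable and $J_++J_-$ invertible one recovers a nondegenerate two-form $\ep$ with $\ep J_+\,$ having $g$ as symmetric and $b$ as skew part, and closedness of $\ep$ follows by reassembling $\dif^c_\pm\o_\pm=\mp h$.

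For the final "moreover" clause, I would argue as follows. Start with an arbitrary generalized K\"ahler structure $(M,L_1,L_2)$ with associated data $(g,b,J_+,J_-)$ and with $J_++J_-$ invertible. The candidate tamed structure has $J:=J_+$ and $\ep:=g(J_+\,\cdot\,,\cdot)$ corrected by $b$; more precisely one sets $\ep$ so that $\ep J_+$ has symmetric part $g$ and skew part some two-form $b'$, and the point is that the freedom in choosing $b$ is exactly a $B$-field transformation. Concretely, replacing $b$ by $b+B$ with $B$ closed changes $h=\dif\!b$ not at all, hence the generalized K\"ahler structure only through the $B$-field transform; so among all $b$ compatible with the given $(g,J_+,J_-)$ and the integrability equations there is a unique one for which the resulting $\ep=g+b$ is closed — uniqueness because two such differ by a closed two-form that must also be of the type killed by the construction, leaving only $B=0$. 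Thus every such generalized K\"ahler structure is, after a unique $B$-field transformation, the one produced by part (i) from $(\ep,J)$. I expect the main obstacle to be the bidegree bookkeeping in the step translating $\dif\!\ep=0$ into the pair $\dif^c_\pm\o_\pm=\mp\dif\!b$: one must carefully use the two \emph{different} complex structures $J_+$ and $J_-$ and the relation between their $(p,q)$-decompositions mediated by $\ep$, and check that the $J_++J_-$-invertibility hypothesis is exactly what makes this dictionary nondegenerate. The linear-algebra and the $B$-field uniqueness parts are routine by comparison.
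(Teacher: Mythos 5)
Your treatment of the equivalence (i)$\Longleftrightarrow$(ii) follows essentially the same route as the paper: the pointwise identity relating $\ep$, $g$, $b$ and $J_{\pm}$ (the paper's \eqref{e:gb_ep}, obtained from $\ep(J_+X,Y)=-\ep(X,J_-Y)$), followed by Gualtieri's characterisation via $\dif^c_{\pm}\o_{\pm}=\mp\dif\!b$ and a bidegree computation (the paper records $\o_{\pm}=-\ep^{1,1}$, $b^{1,1}=0$, $b^{2,0}=\pm{\rm i}\,\ep^{2,0}$). Three slips there: the relation is $\ep(J_+X,Y)=(g+b)(X,Y)$, not ``$\ep=g+b$''; invertibility of $J_++J_-$ is equivalent to $\H^-=\ker(J_++J_-)=0$, i.e.\ to $V^+\cap\overline{V^-}=0$, not to $\H^+=0$; and $\dif\!\ep=0$ does not ``split into the conditions that make $T^{1,0}M$ and $(T^{1,0}M)^{\perp_{\ep}}$ integrable'' --- a closed form can tame a non-integrable $J$, so integrability of $J_{\pm}$ must be assumed separately on both sides, and the only content of the bidegree computation is that, granted integrability, $\dif\!\ep=0$ is equivalent to $\dif^c_{\pm}\o_{\pm}=\mp\dif\!b$.

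The genuine gap is in the ``moreover'' clause. You assert that among all $b$ compatible with $(g,J_+,J_-)$ there is a unique one making the resulting $\ep$ closed, but you give no existence argument, and you never verify that after the $B$-field transformation the identity $J_-=-\ep^{-1}J_+^*\ep$ holds --- without this the transformed structure is not ``obtained from a tamed symplectic structure'' in the sense of the theorem. The paper supplies both at once: $J_++J_-$ invertible forces $\p(L_2)=TM$, hence $L_2=L(TM,\ep_2)$ for a complex two-form $\ep_2$ which is closed by integrability of $L_2$; the unique $B$-field is $B=-{\rm Re}\,\ep_2$ (closed because $\ep_2$ is), $\ep={\rm Im}\,\ep_2$ is symplectic, and comparing $\ep_2$ with $b\pm g$ on $V^+$ and $\overline{V^-}$ (the paper's \eqref{e:ep_gb}) recovers \eqref{e:gb_ep} and hence $J_-=-\ep^{-1}J_+^*\ep$. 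Your existence and uniqueness should be routed through $L_2$ in this way; the required closed two-form is not found by inspecting $\dif\!b$ alone.
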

\begin{proof}
Firstly, note that $\ep(J_+X,Y)=-\ep(X,J_-Y)$\,, for any $X,Y\in TM$. This implies that
\begin{equation} \label{e:gb_ep}
\begin{split}
g(X,Y)=\,&\tfrac12\,\ep\bigl((J_++J_-)(X),Y\bigr)\;,\\
b(X,Y)=\,&\tfrac12\,\ep\bigl((J_+-J_-)(X),Y\bigr)\;,
\end{split}
\end{equation}
for any $X,Y\in TM$.\\
\indent
Therefore $(M,\ep,J)$ is a tamed almost symplectic manifold if and only if the quadruple $(g,b,J_+,J_-)$ defines
a generalized almost K\"ahler manifold with $J_++J_-$ invertible.\\
\indent
Now, with respect to $J_{\pm}$\,, we have $\o_{\pm}=-\ep^{1,1}$, $b^{1,1}=0$ and
$b^{2,0}=\pm{\rm i}\,\ep^{2,0}$. It quickly follows
that if $J_{\pm}$ are integrable then $\dif\!\ep=0$ if and only if $\dif^c_{\pm}\!\o_{\pm}=\mp\dif\!b$\,.\\
\indent
We have thus proved that (i)$\Longleftrightarrow$(ii)\,.\\
\indent
Suppose that $(g,b,J_+,J_-)$ corresponds to the generalized K\"ahler structure $(L_1,L_2)$ on $M$. Then $J_++J_-$
is invertible if and only if $\p(L_2)=TM$. Hence, if $J_++J_-$ is invertible then, up to a unique $B$-field
transformation, we have $L_2=L(TM,{\rm i}\,\ep)$ for some  symplectic form $\ep$ on $M$ and, consequently,
\begin{equation}  \label{e:ep_gb}
\begin{split}
{\rm i}\,\ep(X-{\rm i}J_+X,Y)=\,&(b+g)(X-{\rm i}J_+X,Y)\;,\\
{\rm i}\,\ep(X+{\rm i}J_-X,Y)=\,&(b-g)(X+{\rm i}J_-X,Y)\;,
\end{split}
\end{equation}
for any $X,Y\in TM$. By using the fact that $J_++J_-$ is invertible, from \eqref{e:ep_gb} we quickly obtain
that $g$ and $b$ satisfy \eqref{e:gb_ep}\,. Together with the fact that $g$ and $b$ are symmetric and skew-symmetric,
respectively, this shows that $J_-=-\ep^{-1}J_+^*\ep$ and the proof follows.
\end{proof}

\indent
It is easy to rephrase Theorem \ref{thm:gK_tamed} so that to obtain the description of generalized K\"ahler manifolds
with $J_+-J_-$ invertible.\\
\indent
Let $(M,L^M_1,L^M_2)$ and $(N,L^N_1,L^N_2)$ be generalized K\"ahler manifolds corresponding to the tamed symplectic manifolds
$(M,\ep_M,J_M)$ and $(N,\ep_N,J_N)$\,, respectively. Then $(M\times N,L^M_1\times L^N_1,L^M_2\times L^N_2)$ and
$(M\times N,L^M_1\times L^N_2,L^M_2\times L^N_1)$ are called the \emph{first} and \emph{second product} of
$(M,L^M_1,L^M_2)$ and $(N,L^N_1,L^N_2)$\,, respectively; note that, the first product is the generalized K\"ahler manifold
corresponding to $(M\times N,\ep_M+\ep_N,J_M\times J_N)$\,.

\begin{cor} \label{cor:VH+}
Any generalized K\"ahler manifold with $\H^+$ a holomorphic foliation, with respect to $J_+$ and $J_-$\,, and $\H^-=0$ is,
up to a unique $B$-field transformation,
locally given by the first product of a K\"ahler manifold and a generalized K\"ahler manifold for which both $J_++J_-$ and $J_+-J_-$ are invertible.
\end{cor}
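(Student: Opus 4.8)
The plan is to combine the factorisation results for generalized K\"ahler geometry proved in the previous two sections. Let $(M,L_1,L_2)$ be a generalized K\"ahler manifold with $\H^+$ an integrable distribution and $\H^-=0$. Since $\H^-=0$ means $\ker(J_+-J_-)=0$, i.e.\ $J_+-J_-$ is invertible, and since $\H^+\oplus\H^-\oplus\V=TM$, we have $TM=\H^+\oplus\V$ with $J_++J_-$ invertible on $\V$. I would first invoke Theorem \ref{thm:H+geod}: $\H^+$ integrable implies $\H^+$ is a geodesic holomorphic foliation on $(M,J_{\pm})$ whose leaves, with $(g,J_{\pm})$, are K\"ahler manifolds (and $J_+=J_-$ there). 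Then by Remark \ref{rem:Riem_subm_descend}(1), locally $E_1$ is given by holomorphic Riemannian submersions $\r:(M,g,J_\pm)\to(P,h,J)$ onto a K\"ahler manifold, with fibres the leaves of $\H^+$.

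Next I would show the splitting is metrically and holomorphically a product, locally. The key point is that $\V$ is also integrable: since $E_1\cap\overline{E_1}=(\H^+)^\perp=\V$ (from the proof of Proposition \ref{prop:regular}, noting $\H^-=0$) and $E_1$ is integrable, $E_1\cap\overline{E_1}=\V$ is integrable. Moreover $\V$ is $J_+$- and $J_-$-invariant. So both $\H^+$ and $\V$ are integrable, orthogonal, complementary, and $J_\pm$-invariant, with $\H^+$ geodesic. The remaining work is to check that $\V$ is also geodesic and that $b=\dif b$ decomposes compatibly — I expect to run Lemma \ref{lem:Watson} twice (with $\nabla^+$ and $\nabla^-$, applied now to $\H=\V$, using that $\H^+=\V^\perp$ is integrable and $J_\pm$-invariant) to conclude $2g(B^{\V}(J_\pm X,Y),W)=-g(I^{\V}(X,Y),(J_+-J_-)(W))$ for $X,Y\in\V$, $W\in\H^+$; since $J_+-J_-$ is invertible on $\H^+$ and $\V$ is integrable ($I^{\V}=0$), this forces $B^{\V}=0$. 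Hence both foliations are geodesic and totally geodesic, and by the de Rham-type splitting the metric $g$ is locally a product $g=h\oplus g_\V$.

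It then remains to handle the two-form $b$ and to identify the two factors as claimed. On the $\H^+$-factor, $J_+=J_-$, so the induced generalized K\"ahler structure there is an honest K\"ahler structure (this is exactly the last sentence of Theorem \ref{thm:H+geod}), and one checks the restriction of $b$ to $\H^+$ is closed — in fact, arguing as in the proof of Corollary \ref{cor:H+-} (using $\dif\!\ep_2$ and the Lie-derivative vanishing across the splitting together with Lemma \ref{lem:Watson}), the mixed and $\H^+$-components of $\dif b$ vanish, so up to a $B$-field transformation supported on the $\H^+$-factor we may assume $b$ lives on $\V$. On the $\V$-factor, $J_++J_-$ and $J_+-J_-$ are both invertible (the former on all of $\V$ by construction, the latter because $\H^-=0$ forces $\ker(J_+-J_-)=0$ on $TM\supseteq\V$), and the restricted quadruple $(g_\V,b_\V,J_+|_\V,J_-|_\V)$ is a generalized K\"ahler structure of the type described by Theorem \ref{thm:gK_tamed}. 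The uniqueness of the $B$-field transformation follows as in Corollary \ref{cor:H+-} and Theorem \ref{thm:gK_tamed}: once we demand that $L_1$ be put in the product normal form with the canonical (skew-adjoint) $f$-structure, the closed two-form $B$ effecting this is pinned down.

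The main obstacle I anticipate is the bookkeeping around the two-form $b$: one must verify that $\dif b$ has no $\H^+$-leg and no mixed legs so that a $B$-field transformation in the $\H^+$-directions can remove the $\H^+$-part of $b$ entirely, leaving a clean product $L_1=L\bigl(T^{1,0}(\H^+\text{-factor})\times T(\V\text{-factor}),\,\dots\bigr)$; this is where Lemma \ref{lem:Watson} and the integrability of $E_2$ (hence of $\dif\!\ep_2$-type conditions) must be combined carefully, exactly in the spirit of the computation at the end of the proof of Corollary \ref{cor:H+-}. Everything else is an application of the de Rham splitting theorem for a pair of complementary totally geodesic foliations plus Theorems \ref{thm:H+geod} and \ref{thm:gK_tamed}.
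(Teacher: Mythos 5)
Your overall architecture (split off the $\H^+$-leaves via Theorem \ref{thm:H+geod}, show $\V$ is a complementary geodesic foliation, then identify the two factors) is reasonable, but there are two concrete errors. First, a sign confusion: since $\H^{\pm}=\ker(J_+\mp J_-)$, the hypothesis $\H^-=0$ says that $J_++J_-$ is invertible, not $J_+-J_-$; indeed $J_+-J_-$ vanishes identically on $\H^+$, so it cannot be invertible unless $\H^+=0$. This is not cosmetic, because the invertibility of $J_++J_-$ on all of $TM$ is exactly the hypothesis of Theorem \ref{thm:gK_tamed}, and the paper's proof begins precisely there: up to a \emph{unique} $B$-field transformation, $(M,L_1,L_2)$ comes from a tamed symplectic manifold $(M,\ep,J)$. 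That one step already delivers the uniqueness clause of the statement, the global identity $\dif\ep=0$, and, via \eqref{e:gb_ep}, the fact that $\iota_Xb=0$ for $X\in\H^+$ --- i.e.\ most of the ``bookkeeping around $b$'' that you defer to the end.

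Second, your argument that $\V$ is geodesic does not work as written. When you apply Lemma \ref{lem:Watson} to $\H=\V$ with $\nabla^+$ and $\nabla^-$ and add, the torsion terms do \emph{not} cancel: they cancel in the paper's application to $\H^+$ only because $J_+=J_-$ on $\H^+$, whereas on $\V$ one is left with a residue of the form $\dif b\bigl(W,(J_+-J_-)X,Y\bigr)+\dif b\bigl(W,X,(J_+-J_-)Y\bigr)$, which is exactly the quantity you have not yet controlled (compare the surviving $\dif b$ terms in \eqref{e:H+-int} of Theorem \ref{thm:H+-int}). So the identity $2g\bigl(B^{\V}(J_{\pm}X,Y),W\bigr)=-g\bigl(I^{\V}(X,Y),(J_+-J_-)(W)\bigr)$ is missing terms and ``$I^{\V}=0$ forces $B^{\V}=0$'' does not follow. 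The paper avoids this entirely: writing $\ep=\e+\ep'$ with $\e$ supported on $\H^+$ and $\ep'$ on $\V$, it shows $\dif\e=0$ (K\"ahler leaves plus holomorphy of the two foliations), hence $\dif\ep'=\dif\ep-\dif\e=0$, hence $\Lie_X\ep'=0$ for $X$ tangent to $\H^+$; feeding this back into \eqref{e:gb_ep} gives $(\Lie_Xg)(Y,Z)=0$ for $Y,Z\in\V$, which is equivalent to $\V$ being geodesic. If you want to salvage your route, you must first establish the vanishing of the relevant components of $\dif b$; the cleanest way to do that is the paper's, namely to pass to the tamed symplectic picture at the outset.
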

\begin{proof}
Let $(M,L_1,L_2)$ be a generalized K\"ahler manifold with $\H^+$ a distribution and $\H_-=0$\,. Then, by Theorem \ref{thm:gK_tamed}\,,
up to a unique $B$-field transformation, we have that $(M,L_1,L_2)$ corresponds to the tamed symplectic manifold $(M,\ep,J)$\,.\\
\indent
Thus, by \eqref{e:gb_ep}\,, we have $\iota_Xb=0$ for any $X\in\H^+$ and $\ep=\e+\ep'$ where $\e$ and $\ep'$ are
the two-forms on $M$ characterised by $\iota_X\e=0$\,, $(X\in\V)$\,, $\iota_X\ep'=0$\,, $(X\in\H^+)$\,, $\e=\o_+$ on $\H^+$\,, and
$\ep'=\ep$ on $\V$.\\
\indent
If, further, $\H^+$ is holomorphic with respect to $J_+$ and $J_-$\,, then, by Theorem \ref{thm:H+geod}\,, it is integrable, geodesic 
and its leaves endowed with $(g,J)$ are K\"ahler manifolds; in particular, $\dif\!\e=0$ on $\H^+$.
As, also, $\V$ is a holomorphic foliation, it quickly
follows that $(\Lie_X\!\e)(Y,Z)=0$ for any sections $X$ of $\V$ and $Y$, $Z$ of $\H^+$; consequently, $\dif\!\e=0$\,.\\
\indent
We have thus obtained $\dif\!\ep'=0$ which implies $(\Lie_X\!\ep')(Y,Z)=0$ for any sections $X$ of $\H^+$ and $Y$, $Z$ of $\V$.
Together with \eqref{e:gb_ep}\,, this gives $(\Lie_X\!b)(Y,Z)=0$ and $(\Lie_X\!g)(Y,Z)=0$ for any sections $X$ of $\H^+$ and
$Y$, $Z$ of $\V$; in particular, this shows that $\V$ is geodesic. The proof follows.
\end{proof}

\indent
Obviously, a result similar to Corollary \ref{cor:VH+} holds for any generalized K\"ahler manifold with $\H^+=0$
and $\H^-$ an integrable distribution.

\begin{cor} \label{cor:L2_normal}
Let $(M,L_1,L_2)$ be a generalized K\"ahler manifold such that $L_2$ is in normal form with respect to its
$f$-structure and the two-form $\ep$ on $M$.\\
\indent
Then, in a neighbourhood of each regular point of $L_1$\,, we have that $(M,L_1,L_2)$ is the second product
of a K\"ahler manifold and a generalized K\"ahler manifold determined by a tamed symplectic manifold.
\end{cor}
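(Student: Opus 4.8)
The plan is to prove Corollary \ref{cor:L2_normal} by reducing to the tamed-symplectic picture via Theorem \ref{thm:gK_tamed}, applied not to $L_2$ globally but to the ``complementary piece'' of the decomposition. The hypothesis that $L_2 = L(T^0M \oplus T^{1,0}M, {\rm i}\,\ep)$ is in normal form means precisely that $E_2 = \p(L_2)$ is a co-CR structure (so $L_1$ is automatically regular on a dense open set, by Proposition \ref{prop:regular} applied with the roles of $L_1,L_2$ exchanged — note $\p(L_2)$ being a bundle is regularity of $L_2$, and the normal form also forces $\H^-\oplus\V$ to be a distribution). First I would unwind the bi-Hermitian data: since $L_2$ is in normal form with $f$-structure $F_2$ (whose eigenbundles are $T^0M = \H^+\oplus\V$... wait, one must identify which of $\H^{\pm},\V$ is the kernel of $F_2$). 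From the relations in the proof of Proposition \ref{prop:regular}, $E_2 = (V^+\cap\overline{V^-})\oplus\H^+\oplus\V$ and $E_2^\perp = V^+\cap\overline{V^-}$, so $F_2$ has kernel $\H^-$ (the part killed), and its $\pm{\rm i}$-eigenbundles sit inside $\H^+\oplus\V$. Thus ``$L_2$ in normal form'' says $J_+ + J_- = 0$ on $\H^-$... but that is automatic; the real content is that $\o$ (here $\ep$) is compatible, i.e. $\ep$ is nondegenerate on $\H^-$ and $\iota_X\ep = 0$ for $X \in \H^+\oplus\V$. Equivalently, $J_+ = -J_-$ away from $\H^-$ is false; rather the normal form degeneracy locus is exactly $\H^-$.

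Next I would localize around a regular point of $L_1$, so that $\H^+$, $\H^-$, $\V$ are all genuine subbundles and $E_1, E_2$ are bundles. The key structural claim is that $\ep$, being the symplectic-type form of the normal form $L_2$, splits the manifold locally as a product. Concretely: on $\H^-$ the form $\ep$ is a genuine symplectic form (nondegenerate, and closed because $L_2$ integrable forces $L(\H^-,\ep)$ to be a Poisson — in fact presymplectic — structure by Proposition \ref{prop:int_normal_form}), and $J_+ = J_-$ would be wrong — on $\H^-$ we have $J_+ = -J_-$ by definition of $\H^-$. So $(\text{leaf of }\H^-, \ep|_{\H^-}, J_+|_{\H^-})$ is a candidate tamed symplectic manifold: $J_+|_{\H^-}$ is integrable (eigenbundle $V^+\cap\overline{V^-}$ is integrable since $E_2^\perp$ is, using integrability of $L_2$), and $J_-|_{\H^-} = -J_+|_{\H^-}$, and one checks $\ep(J_+X,X) > 0$ from the definiteness built into the generalized Kähler condition $\J_1\J_2 < 0$. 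Then Theorem \ref{thm:gK_tamed} applies on the $\H^-$-leaf to show $(g,b,J_+,J_-)|_{\H^-}$ is a generalized Kähler structure coming from a tamed symplectic manifold — note on $\H^-$ both $J_++J_-=0$ (not invertible!) — so I must instead use the $J_+ - J_-$ invertible version of Theorem \ref{thm:gK_tamed} mentioned in the remark just after its proof. Meanwhile on $\H^+\oplus\V$: since $\iota_X\ep = 0$ there and $\ep$ is the defining form, $L_2$ restricted to $\H^+\oplus\V$ is $L(\,\cdot\,, 0) = E\oplus\mathrm{Ann}(E)$, i.e. the ``co-CR part'' with zero Poisson bracket — this should force, together with integrability, a complex (Kähler) structure on the $(\H^+\oplus\V)$-factor.

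Then I would assemble the local product: by Proposition \ref{prop:int_normal_form} and Theorem \ref{thm:local_gcs} (or directly from Corollary \ref{cor:blank_up_to_B} and the normal-form structure), $L_2$ being in special normal form locally splits $M \cong M' \times M''$ where $M''$ carries the symplectic form $\ep$ restricted to the nondegeneracy locus $\H^-$ and $M'$ carries the kernel directions $\H^+\oplus\V$ with a complex structure $J_+ = J_-$. On $M'$ the restricted generalized Kähler data $(g,b,J_+,J_+)|_{M'}$ has $b$ closed (indeed $b|_{M'} = 0$ up to the $B$-field we are allowed to remove, by a Corollary \ref{cor:H+-}-style argument using Lemma \ref{lem:Watson}) hence is an honest Kähler manifold; on $M''$ we get the tamed symplectic manifold. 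The phrase ``second product'' then matches because: in the second product of a Kähler $(M',J')$ and a generalized Kähler $M''$, the structure $L_2$ is $L(T^{1,0}M' \times TM'', {\rm i}\,\o_{M''})$, which is exactly ``$L_2$ in normal form with symplectic-direction form $\ep = \o_{M''}$'' — the shape we started from. So the identification is essentially tautological once the local splitting is established.

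\textbf{The main obstacle} I expect is pinning down that the ``complex factor'' $M'$ (the $\H^+\oplus\V$ directions) actually acquires a \emph{closed} $b$, i.e. that the residual $B$-field can be gauged away and the factor is genuinely Kähler rather than merely generalized Kähler of a degenerate type — this is where one must run the $\dif\!b$-vanishing computation, analogous to the end of the proof of Corollary \ref{cor:H+-}, invoking Lemma \ref{lem:Watson} on $\H = \H^+$ with $\nabla = \nabla^+$ to kill the relevant components of $\dif\!b$, plus the integrability/holomorphicity of the foliations $\H^+$, $\V$, $\H^-$ coming from integrability of $E_1$ and $E_2$. A secondary technical point is bookkeeping the uniqueness of the $B$-field transformation (``the second product'' is asserted, presumably up to a unique $B$-field), which should follow from Proposition \ref{prop:linear_normal_form}'s uniqueness clause applied pointwise, just as in Corollary \ref{cor:H+-}; but I would double-check whether the statement as written even claims uniqueness — it does not, so I can be slightly cavalier there and only produce \emph{some} $B$-field realizing the second-product form.
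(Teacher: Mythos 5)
Your identification of the eigenbundle decomposition of the $f$-structure $F_2$ is backwards, and the error propagates through the whole argument. By Remark \ref{rem:assocF_holo-functions}(1), $F_2$ is determined by the co-CR structure $E_2$ and the CR structure $E_2^{\perp}=V^+\cap\overline{V^-}$; by the correspondence recalled at the start of Section \ref{section:gclm}, this means $T^{1,0}M=E_2^{\perp}$ and $T^0M=E_2\cap\overline{E_2}=\H^+\oplus\V$. Hence $\ker F_2=\H^+\oplus\V$ and the $\pm{\rm i}$-eigenbundles of $F_2$ span $\H^-$ --- the opposite of what you assert. Consequently the compatibility condition built into the normal form says that $\ep$ is nondegenerate on $\H^+\oplus\V$ and that ${\rm ker}\,\ep=\H^-$ (the paper's proof uses precisely this last fact). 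Your two proposed factors are therefore swapped and do not exist as described: $\ep|_{\H^-}$ is identically zero, so the leaves of $\H^-$ cannot carry the tamed symplectic structure $\bigl(\ep|_{\H^-},J_+|_{\H^-}\bigr)$; and on $\H^+\oplus\V$ one does not have $J_+=J_-$ (indeed $J_+-J_-$ is invertible on $\V$), so that part is not a complex/K\"ahler factor. The correct assembly is the reverse of yours: the leaves of $\H^-$ (on which $J_+=-J_-$) furnish the K\"ahler factor, while $\H^+\oplus\V$, on which $J_++J_-$ is invertible, carries the tamed symplectic factor via Theorem \ref{thm:gK_tamed}.

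Independently of this, your outline omits the one genuinely nontrivial step, namely proving that $\H^-$ is integrable, which is what allows Theorem \ref{thm:H+geod} (in its $\H^-$-version) to be invoked and the splitting to be made geodesic and holomorphic. Neither Proposition \ref{prop:int_normal_form} nor Theorem \ref{thm:local_gcs} nor Corollary \ref{cor:blank_up_to_B} yields this: they concern the single structure $L_2$ and give no information about the metric splitting of the generalized K\"ahler pair. The paper's proof is devoted almost entirely to this point: it introduces the auxiliary forms $\ep_{\pm}$ on $T^{1,0}_+M+T^{1,0}_-M$ (equal to $\ep$ on $T^{1,0}_{\pm}M$ and annihilated by $T^{1,0}_{\mp}M$), shows that $\dif\ep_{\pm}=0$ on $T^{1,0}_+M+T^{1,0}_-M$, deduces that ${\rm ker}\,\ep_{\pm}$ is integrable and hence that $\H^-$ is an antiholomorphic distribution, and concludes by Lemma \ref{lem:holoH}. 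Nothing in your proposal substitutes for that computation, so even after correcting the factor identification the argument would remain incomplete.
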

\begin{proof}
Assume $L_1$ regular. Define $\ep_{\pm}$ to be the  (complex linear) two-forms on $T^{1,0}_+M+T^{1,0}_-M$
such that $\ep_{\pm}=\ep$ on $T^{1,0}_{\pm}M$ and $\iota_X\ep_{\pm}=0$ if $X\in T^{1,0}_{\mp}M$.\\
\indent
Obviously, $\dif\!\ep_{\pm}=0$ on $T^{1,0}_{\pm}M$.
Also, from the fact that $\iota_X\ep_{\pm}=0$ if $X\in T^{1,0}_{\mp}M$ it quickly follows that if
$X_{\pm},\,Y_{\pm},\,Z_{\pm}\in T^{1,0}_{\pm}M$ then $\dif\!\ep_{\pm}(X_{\mp},Y_{\mp},Z_{\pm})=0$\,; together with the fact that
$\ep=\ep_++\ep_-$ on $T^{1,0}_+M+T^{1,0}_-M$, this implies that $\dif\!\ep_{\pm}(X_{\pm},Y_{\pm},Z_{\mp})=0$\,.
Thus, we have proved that $\dif\!\ep_{\pm}=0$ on $T^{1,0}_+M+T^{1,0}_-M$.\\
\indent
Therefore ${\rm ker}\,\ep_{\pm}=T^{1,0}_{\mp}\oplus\bigl(T^{0,1}_{\mp}\cap\H^-\bigr)$ is integrable which implies that $\H^-$ is
an antiholomorphic distribution on $(M,J_{\mp})$\,. Hence, by Lemma \ref{lem:holoH}\,, we have that $\H^-$ is integrable and
the proof follows from Theorem \ref{thm:H+geod} and the fact that ${\rm ker}\,\ep=\H^-$.
\end{proof}

\indent
Let $(M,\ep,J)$ be a tamed almost symplectic manifold. With the same notations as in Corollary \ref{cor:L2_normal}\,,
if $(M,L_1,L_2)$ is the generalized K\"ahler manifold determined by $(M,\ep,J)$ then, from \eqref{e:gb_ep}\,,
it follows that $L_1=L\bigl(T^{1,0}_+M+T^{1,0}_-M,\,{\rm i}\,\ep_+-{\rm i}\,\ep_-\bigr)$\,.

\begin{thm}[cf.\ \cite{Hit-gc_CMP}\,] \label{thm:holo_Poisson_tamed}
Let $(M,\ep,J)$ be a tamed almost symplectic manifold and let $(M,L_1,L_2)$ be the corresponding generalized almost K\"ahler manifold;
denote by $\r^{\pm}:T^{\C\!}M\to T^{1,0}_{\pm}M$ the projections.\\
\indent
If $(M,L_1,L_2)$ is generalized K\"ahler then $J_{\pm}$ are integrable and $\r^{\pm}_*(L_2)$ are holomorphic Poisson structures on $(M,J_{\pm})$\,,
respectively. Furthermore, the converse holds if also $J_+-J_-$ is invertible; moreover, in this case, if $(M,L_1,L_2)$ is generalized K\"ahler then
$\r^{\pm}_*(L_2)$ are holomorphic symplectic structures on $(M,J_{\pm})$\,, respectively.
\end{thm}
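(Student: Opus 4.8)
The plan is to work locally and in the complexified tangent bundle, reducing everything to the explicit description $L_1=L\bigl(T^{1,0}_+M+T^{1,0}_-M,\,{\rm i}\,\ep_+-{\rm i}\,\ep_-\bigr)$ noted just before the statement, together with the fact (from \eqref{e:gb_ep} and Theorem \ref{thm:gK_tamed}) that $L_2=L(TM,{\rm i}\,\ep)$ up to the (unique) $B$-field transformation that puts $L_2$ in normal form. First I would establish the direct implication: assume $(M,L_1,L_2)$ is generalized K\"ahler. Then $J_\pm$ are integrable (this is part of the equivalence recalled from \cite[Theorem 6.28]{Gua-thesis}), so $T^{1,0}_\pm M$ are integrable subbundles of $T^{\C}M$ and the projections $\r^\pm$ make sense as complex-linear bundle maps. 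I would then apply Proposition \ref{prop:pfpb} to compute $\r^\pm_*(L_2)$ explicitly: since $L_2=L(TM,{\rm i}\,\ep)$ (up to $B$-field), the push-forward is $L\bigl(\r^\pm\bigl((\ker\r^\pm)^{\perp_{{\rm i}\ep}}\bigr),\widecheckl\bigr)$, and the key point is to identify $(\ker\r^\pm)^{\perp_{{\rm i}\ep}}=(T^{0,1}_\pm M \oplus T^{1,0}_\mp M \oplus T^{0,1}_\mp M)^{\perp_{\ep}}$ inside $T^{\C}M$. One checks, using $\ep(J_+X,Y)=-\ep(X,J_-Y)$, that the $\ep$-orthogonal of $T^{0,1}_\pm M$ relates $J_+$ and $J_-$ eigenbundles, so that the image is a subbundle of $T^{1,0}_\pm M$, giving a bivector on $(M,J_\pm)$. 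Integrability of this bivector as a (complex-analytic, hence holomorphic) Poisson structure then follows from Proposition \ref{prop:funct_integr}(i), since $L_2$ is integrable and $\r^\pm$ are (co-CR holomorphic) submersions onto the quotients; holomorphicity of the resulting Poisson tensor is exactly the statement that it is a section of $\Lambda^2 T^{1,0}_\pm M$ closed under the relevant bracket, which the integrability of $L_2$ and of $T^{1,0}_\pm M$ delivers.

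Next I would treat the converse under the additional hypothesis that $J_+-J_-$ is invertible. Here the generalized almost K\"ahler structure $(g,b,J_+,J_-)$ is already in hand from $(M,\ep,J)$ via \eqref{e:gb_ep}; what must be shown is that $L_1,L_2$ are integrable, i.e.\ (again by \cite[Theorem 6.28]{Gua-thesis}) that $J_\pm$ are integrable and $\dif^c_\pm\o_\pm=\mp\dif b$. Integrability of $J_\pm$ is assumed. For the torsion condition I would argue contrapositively through the Poisson data: the hypothesis says $\r^\pm_*(L_2)$ are holomorphic Poisson structures on $(M,J_\pm)$. Invertibility of $J_+-J_-$ means $T^{1,0}_+M\cap T^{1,0}_-M=0$ (and similarly for the conjugates), so $T^{\C}M=T^{1,0}_+M\oplus T^{0,1}_+M$ decomposes compatibly with the $J_-$-splitting in a way that makes $\r^\pm|_{T^{1,0}_\mp M}$ an isomorphism onto $T^{1,0}_\pm M$; hence $\r^\pm_*(L_2)$ is not merely a Poisson but a \emph{symplectic} structure (its anchor, computed via Proposition \ref{prop:pfpb}, is surjective), which simultaneously proves the last sentence of the theorem. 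The integrability of $L_2=L(TM,{\rm i}\,\ep)$ is then equivalent to $\dif\ep=0$ on all of $T^{\C}M$; decomposing $\ep=\ep_++\ep_-+(\text{cross terms})$ along the $J_\pm$-bitype and using that $\dif\ep_\pm=0$ on $T^{1,0}_+M+T^{1,0}_-M$ (the computation already recorded in the proof of Corollary \ref{cor:L2_normal}) together with the holomorphicity of the push-forward Poisson tensors to control the remaining components, one recovers $\dif\ep=0$, hence $\dif b=0$ in bidegree issues and the required $\dif^c_\pm\o_\pm=\mp\dif b$ by the bitype identities $\o_\pm=-\ep^{1,1}$, $b^{2,0}=\pm{\rm i}\,\ep^{2,0}$ from the proof of Theorem \ref{thm:gK_tamed}. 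Finally, Proposition \ref{prop:funct_integr}(ii) applied to $L_1=\phi^*(\text{its canonical Poisson quotient})$ in the sense of Corollary \ref{cor:pbP}, or directly the equivalence (i)$\Leftrightarrow$(ii) of \cite[Theorem 6.28]{Gua-thesis}, upgrades this to integrability of $(L_1,L_2)$.

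The main obstacle I expect is the converse direction, specifically extracting the closedness (equivalently, the generalized K\"ahler integrability) from the bare hypothesis that \emph{both} $\r^+_*(L_2)$ and $\r^-_*(L_2)$ are holomorphic Poisson. The subtlety is that holomorphicity of $\r^\pm_*(L_2)$ on $(M,J_\pm)$ a priori constrains only the ``$\pm$-part'' of $\dif\ep$; one has to combine the two pieces of information and use invertibility of $J_+-J_-$ in an essential way to see that they jointly force $\dif\ep=0$. I anticipate this requires a careful bidegree bookkeeping argument — writing out $\dif\ep(X,Y,Z)$ for $X,Y,Z$ ranging over the four eigenbundles $T^{1,0}_\pm M, T^{0,1}_\pm M$ and showing each component vanishes — where the only genuinely new input beyond Theorem \ref{thm:gK_tamed} and Corollary \ref{cor:L2_normal} is the holomorphic-Poisson hypothesis controlling the mixed $(2,1)$-type terms. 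Everything else (the push-forward formula, the identification of quotients, the symplectic upgrade, the reduction to $\dif^c_\pm\o_\pm=\mp\dif b$) is routine given the machinery already developed in Sections \ref{section:lDs}--\ref{section:gK_tamed}, so I would present those steps briskly and concentrate the detailed computation on the closedness argument.
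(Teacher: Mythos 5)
Your route coincides with the paper's: identify $\r^{\pm}_*(L_2)$ via Proposition \ref{prop:pfpb} as (canonical Poisson quotients of) Dirac structures built from $\ep_{\mp}$, and read off the direct implication from the computation in the proof of Corollary \ref{cor:L2_normal}. One point you gloss over and should not: $\r^{\pm}$ are bundle projections, not maps of manifolds, so ``$\r^{\pm}_*(L_2)$ is a holomorphic Poisson structure'' needs the paper's device of assuming $(M,\ep,J)$ real analytic and passing to the complexification, where $\r^{\pm}$ become genuine holomorphic submersions; without this, the push-forward is only a fibrewise operation and the integrability/holomorphicity assertions have no meaning.

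The genuine gap is in the converse. You propose to prove $\dif\!\ep=0$ by combining ``$\dif\!\ep_{\pm}=0$ on $T^{1,0}_+M+T^{1,0}_-M$, as recorded in the proof of Corollary \ref{cor:L2_normal}'' with a bidegree bookkeeping in which the holomorphic--Poisson hypothesis only controls the mixed $(2,1)$-components. This is circular: the component $\dif\!\ep_{\pm}(X_{\pm},Y_{\pm},Z_{\mp})=0$ is obtained in Corollary \ref{cor:L2_normal} precisely by using $\dif\!\ep=0$ (i.e.\ the generalized K\"ahler hypothesis), which is what you are trying to prove here. The missing idea is the one-line equivalence the paper states: when $J_+-J_-$ is invertible one has $T^{\C}M=T^{1,0}_+M\oplus T^{1,0}_-M$, so $\ep_{\pm}$ are defined on all of $T^{\C}M$ and $\ep=\ep_++\ep_-$ globally --- there are no cross terms, since $\ep(J_+X,Y)=-\ep(X,J_-Y)$ forces $\ep(T^{1,0}_+M,T^{1,0}_-M)=0$ --- and $\ep_{\pm}$ descends to a nondegenerate two-form whose inverse is exactly the push-forward bivector. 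Hence ``$\r^{\pm}_*(L_2)$ holomorphic Poisson'' is \emph{equivalent} to ``$\dif\!\ep_{\pm}=0$'' as a full three-form (nondegenerate Poisson $=$ closed inverse two-form), not merely a constraint on some components; summing the two gives $\dif\!\ep=0$ outright, Theorem \ref{thm:gK_tamed} then yields the generalized K\"ahler property, and nondegeneracy gives the holomorphic symplectic statement. A further small slip: injectivity of $\r^{\pm}|_{T^{1,0}_{\mp}M}$ is governed by the invertibility of $J_++J_-$ (equivalently $\H^-=0$), which is automatic for a tamed almost symplectic manifold, and not by that of $J_+-J_-$ as you assert.
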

\begin{proof}
Assume, for simplicity, that $(M,\ep,J)$ is real analytic. Also, we may assume $L_1$ regular.
If $(M,L_1,L_2)$ is generalized K\"ahler then, by passing to the complexification of $(M,\ep,J)$\,, from Proposition \ref{prop:pfpb}
and the proof of Corollary \ref{cor:L2_normal} we obtain that $\r^{\pm}_*(L_2)$ are the canonical Poisson quotients of
$L\bigl(T^{0,1}_+M+T^{0,1}_-M,{\rm i}\,\overline{\ep_{\mp}}\bigr)$\,.\\
\indent
If $J_+\pm J_-$ are invertible and $J_{\pm}$ are integrable then $\r^{\pm}_*(L_2)$ are holomorphic Poisson structures on $(M,J_{\pm})$
if and only if $\dif\!\ep_{\pm}=0$\,.
\end{proof}

\indent
We call the $\r^{\pm}_*(L_2)$ of Theorem \ref{thm:holo_Poisson_tamed} the \emph{holomorphic Poisson structures associated to $(M,L_1,L_2)$}\,.

\begin{rem}[cf.\ \cite{Hit-gc_CMP}\,,\,\cite{Gua-Pbranes}\,] \label{rem:holo_Poisson_tamed}
1) Let $(M,L_1,L_2)$ be a generalized K\"ahler manifold with $J_++J_-$ invertible. Denote by $\e_{\pm}$ the (real) bivectors on $M$
which determine the holomorphic Poisson structures on $(M,J_{\pm})$\,, respectively, associated to $(M,L_1,L_2)$\,; that is,
with respect to $J_{\pm}$\,, we have $\e_{\pm}^{1,1}=0$ and the holomorphic bivectors corresponding to $\r^{\pm}_*(L_2)$
are $\e_{\pm}^{2,0}$, respectively.\\
\indent
It quickly follows that
$$\e_-=-\e_+=\tfrac12\bigl(J\ep^{-1}+\ep^{-1}J^*\bigr)=\tfrac12\bigl(J_+-J_-\bigr)\ep^{-1}=\tfrac14[J_+,J_-]g^{-1}\;,$$
where $(M,\ep,J)$ is the tamed symplectic manifold associated to $(M,L_1,L_2)$\,.\\
\indent
Hence, the symplectic foliation associated to $\e_+$ is given by $\V(={\rm im}(J_+-J_-)\,)$\,.\\
\indent
2) If the generalized almost K\"ahler structure $(L_1,L_2)$ on $M$ corresponds to the quadruple $(g,b,J_+,J_-)$ then $(L_2,L_1)$ corresponds
to $(g,b,J_+,-J_-)$\,. Assume that $(M,L_1,L_2)$ is a generalized K\"ahler manifold with $J_++J_-$ and $J_+-J_-$ invertible and let
$\e_+$ and $\e_+'$ be the bivectors which determine, as in (1)\,, the holomorphic symplectic structures
associated to $(M,L_1,L_2)$ and $(M,L_2,L_1)$\,, respectively. Then \eqref{e:Im-ep1_ep} implies that $\e_+'=-\e_+$\,.
\end{rem}

\indent
Next, we prove some results on holomorphic maps between generalized K\"ahler manifolds.

\begin{cor} \label{cor:holo_diffeo}
Let $(M,L_1,L_2)$ be a generalized almost K\"ahler manifold with $J_++J_-$ and $J_+-J_-$ invertible.\\
\indent
If $\phi:M\to M$ is a diffeomorphism then any two of the following assertions imply the third:\\
\indent
\quad{\rm (i)} $\phi:(M,L_1)\to(M,L_1)$ is holomorphic.\\
\indent
\quad{\rm (ii)} $\phi:(M,L_2)\to(M,L_2)$ is holomorphic.\\
\indent
\quad{\rm (iii)} $\bigl[\dif\!\phi\,,J_+J_-\bigr]=0$\,.
\end{cor}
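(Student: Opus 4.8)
The plan is to translate each of the three conditions into the language of the tamed symplectic manifold $(M,\ep,J)$ associated to $(M,L_1,L_2)$ via Theorem \ref{thm:gK_tamed}, and into the associated holomorphic Poisson structures of Theorem \ref{thm:holo_Poisson_tamed} and Remark \ref{rem:holo_Poisson_tamed}(1). First I would observe that, since $\phi$ is a diffeomorphism and both $L_1$ and $L_2$ are regular everywhere (because $J_++J_-$ and $J_+-J_-$ are invertible, so $\p(L_2)=TM$ and, by the analogue of Theorem \ref{thm:gK_tamed} for $J_+-J_-$, also $\p(L_1)=TM$), Remark following Definition \ref{defn:ogc} (part 2) applies: $\phi$ is holomorphic with respect to $L_j$ if and only if $\phi_*(L_j)=L_j$ up to a $B$-field transformation. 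Since $L_2=L(TM,{\rm i}\,\ep)$ is a (complexified) Poisson structure with $\p(L_2)=TM$, a $B$-field transformation of it is again of that form, so condition (ii) is equivalent to $\phi^*\ep$ and $\ep$ differing by a closed two-form that is compatible with the symplectic nature — more precisely, by the uniqueness clause of Theorem \ref{thm:gK_tamed}, condition (ii) is equivalent to $\phi^*J_+ = J_+$ as almost complex structures pushed forward through $\phi$, i.e. $\dif\!\phi\circ J_+ = J_+\circ\dif\!\phi$; and symmetrically (using that $(L_2,L_1)$ corresponds to $(g,b,J_+,-J_-)$, Remark \ref{rem:holo_Poisson_tamed}(2)), condition (i) becomes $\dif\!\phi\circ J_- = J_-\circ\dif\!\phi$.

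Actually the cleanest route is through the holomorphic Poisson structures. By Remark \ref{rem:holo_Poisson_tamed}(1), $\r^+_*(L_2)$ is the holomorphic Poisson structure on $(M,J_+)$ with bivector $\e_+$, and condition (ii) — holomorphicity of $\phi$ with respect to $L_2$ — should, after pushing $L_2$ forward through $\phi$ and projecting by $\r^+$ (using functoriality of $f_*$ and the fact that $\dif\!\phi$ respects the splitting once it commutes with $J_+$), be equivalent to $\phi$ being both $J_+$-holomorphic and a Poisson morphism for $\e_+$. Since $\phi$ is a diffeomorphism, "Poisson morphism for $\e_+$" means $\dif\!\phi(\e_+)=\e_+$. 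Similarly condition (i) is equivalent to $\phi$ being $J_-$-holomorphic together with $\dif\!\phi(\e_-)=\e_-$; but $\e_-=-\e_+$, so the Poisson condition is the same in both cases. Therefore: (i) $\Longleftrightarrow$ ($\dif\!\phi\circ J_- = J_-\circ\dif\!\phi$ and $\dif\!\phi(\e_+)=\e_+$) and (ii) $\Longleftrightarrow$ ($\dif\!\phi\circ J_+ = J_+\circ\dif\!\phi$ and $\dif\!\phi(\e_+)=\e_+$).

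Granting these two equivalences, the corollary follows from elementary linear algebra. If (i) and (ii) both hold, then $\dif\!\phi$ commutes with both $J_+$ and $J_-$, hence with $J_+J_-$, which is (iii). If (ii) and (iii) hold, then $\dif\!\phi$ commutes with $J_+$ and with $J_+J_-$; composing, $\dif\!\phi$ commutes with $J_+^{-1}(J_+J_-)=J_-$ up to the sign coming from $J_+^2=-\,\mathrm{Id}$, so $\dif\!\phi$ commutes with $J_-$; together with the Poisson condition $\dif\!\phi(\e_+)=\e_+$ extracted from (ii), this gives (i). The case (i) and (iii) $\Rightarrow$ (ii) is symmetric. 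The main obstacle I expect is the careful verification of the two boxed equivalences — in particular, showing that the "up to $B$-field transformation" freedom in the holomorphicity criterion, once passed through $\r^{\pm}_*$, forces $\dif\!\phi$ to commute with $J_{\pm}$ on the nose rather than merely preserving the co-CR structure $\p(L_j)$; this is where one needs the invertibility of both $J_++J_-$ and $J_+-J_-$ (so that $E_1=E_2=T^{\C}M$ and the co-CR data is trivial, leaving only the Poisson and complex-structure data), together with the uniqueness clauses in Theorem \ref{thm:gK_tamed} and Theorem \ref{thm:holo_Poisson_tamed}. Once that bookkeeping is done, the linear-algebra step combining the three commutation relations is immediate.
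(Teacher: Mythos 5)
Your reduction of conditions (i) and (ii) contains a genuine error, and it invalidates all three implications. Because $J_++J_-$ and $J_+-J_-$ are invertible we have $\H^{\pm}=0$, hence $\p(L_1)=\p(L_2)=T^{\C}M$ and the associated co-CR structures are \emph{trivial}; by Definition \ref{defn:gcl}, holomorphicity of the diffeomorphism $\phi$ with respect to $L_j=L(T^{\C}M,\ep_j)$ is therefore \emph{only} the condition that $\phi$ preserve the associated real (symplectic) Poisson structure ${\rm Im}\,\ep_j$\,: explicitly, (ii) $\Leftrightarrow\phi^*\ep=\ep$ and (i) $\Leftrightarrow\phi^*({\rm Im}\,\ep_1)={\rm Im}\,\ep_1$. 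Your claim that (ii) forces $\dif\!\phi\circ J_+=J_+\circ\dif\!\phi$ cannot be right: $J_+$ is not determined by $L_2$ alone, and a symplectomorphism of $(M,\ep)$ need not be $J_+$-holomorphic. The appeal to the uniqueness clause of Theorem \ref{thm:gK_tamed} conflates preserving $L_2$ with preserving the whole tamed symplectic pair $(\ep,J)$, and the paper's Corollary \ref{cor:Poisson_holo_map} treats $L_2$-holomorphicity and $J_{\pm}$-holomorphicity as independent hypotheses precisely because neither implies the other. A concrete counterexample: in Example \ref{exm:hK_gK} on flat $\R^4$, conditions (i), (ii), (iii) amount to preserving $\o_J-\o_K$, preserving $\o_J+\o_K$, and commuting with $I$; the stabilizer of the pair $(\o_J,\o_K)$ in $GL(4,\R)$ is a copy of $SL(2,\C_I)$ (six-dimensional), which strictly contains the quaternionic-linear maps, so there are linear diffeomorphisms satisfying all of (i)--(iii) that do not commute with $J_+=J$. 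Thus your two key equivalences are false, and the concluding linear-algebra step (commuting with $J_+$ and $J_-$ \emph{separately}) would prove a strictly stronger, and false, version of (iii).

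The workable argument stays entirely with the two symplectic forms. The first relation of \eqref{e:gb_ep} gives $({\rm Im}\,\ep_1)(J_+-J_-)=\ep(J_++J_-)$\,, so $A:=\ep^{-1}({\rm Im}\,\ep_1)=(J_++J_-)(J_+-J_-)^{-1}$; using $J_{\pm}^2=-1$ one checks that $A=(J_+J_--1)(J_+J_-+1)^{-1}$ is a Cayley transform of $J_+J_-$, and that $A$ is $g$-skew-adjoint, so $1-A$ is invertible and $J_+J_-=(1+A)(1-A)^{-1}$. Hence $[\dif\!\phi,J_+J_-]=0\Leftrightarrow[\dif\!\phi,A]=0$, and, given that $\phi$ preserves either one of $\ep$ and ${\rm Im}\,\ep_1$, commuting with $A$ is equivalent to preserving the other. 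This yields all three implications simultaneously, which is exactly the route the paper takes.
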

\begin{proof}
Let $L=L\bigl(T^{1,0}_+M+T^{1,0}_-M,\ep_1\bigr)$\,. By using the first relation of \eqref{e:gb_ep}\,, we obtain
\begin{equation} \label{e:Im-ep1_ep}
({\rm Im}\,\ep_1)(J_+-J_-)=\ep(J_++J_-)\;,
\end{equation}
which, firstly, shows that if (iii) holds then (i)$\Longleftrightarrow$(ii)\,.\\
\indent
Furthermore, \eqref{e:Im-ep1_ep} implies that $\ep^{-1}({\rm Im}\,\ep_1)$ is skew-adjoint, with respect to $g$\,, and, consequently,
$\ep-{\rm Im}\,\ep_1$ is invertible. This fact together with \eqref{e:Im-ep1_ep} proves that (i)\,,\,(ii)$\Longrightarrow$(iii)\,.
\end{proof}

\begin{cor} \label{cor:Poisson_holo_map}
Let $(M,L^M_1,L^M_2)$ and $(N,L^N_1,L^N_2)$ be generalized K\"ahler manifolds, with $J^M_++J^M_-$ and $J^N_++J^N_-$ invertible,
and let $\phi:M\to N$ be a map.\\
\indent
{\rm (i)} If $\phi:(M,L^M_1)\to(N,L^N_1)$ and $\phi:(M,J^M_{\pm})\to(N,J^N_{\pm})$ are holomorphic
then $\phi$ is a holomorphic Poisson morphism between the corresponding
associated holomorphic Poisson manifolds; moreover, the converse holds if $\phi$ is an immersion.\\
\indent
{\rm (ii)} If $\phi:(M,L^M_2)\to(N,L^N_2)$ and, either, $\phi:(M,J^M_+)\to(N,J^N_+)$ or $\phi:(M,J^M_-)\to(N,J^N_-)$
are holomorphic maps then $\phi$ is a holomorphic Poisson morphism between the associated holomorphic Poisson structures.
\end{cor}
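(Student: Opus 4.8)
The plan is to reduce everything to the local normal-form description from Proposition~\ref{prop:ogc_basic} and then track how the associated holomorphic Poisson structures $\r^{\pm}_*(L_2)$ behave under such a map. First I would recall from Theorem~\ref{thm:holo_Poisson_tamed} and Remark~\ref{rem:holo_Poisson_tamed}(1) that, since $J^M_++J^M_-$ and $J^N_++J^N_-$ are invertible, the associated holomorphic Poisson bivectors are $\e^M_{\pm}=\mp\tfrac14[J^M_+,J^M_-](g^M)^{-1}$ and similarly on $N$, and that $L^M_2=L(TM,{\rm i}\,\ep_M)$, $L^N_2=L(TN,{\rm i}\,\ep_N)$ up to the canonical $B$-field transformations, where $(M,\ep_M,J_M)$ and $(N,\ep_N,J_N)$ are the associated tamed symplectic manifolds. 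The holomorphic Poisson structure $\r^{\pm}_*(L_2)$ is, by the proof of Theorem~\ref{thm:holo_Poisson_tamed}, the canonical Poisson quotient of $L_2$ along the foliation $T^{1,0}_{\mp}M$ (after complexifying), so being a holomorphic Poisson morphism means: $\phi$ is $J_{\pm}$-holomorphic and $\dif\!\phi$ intertwines the bivectors $\e^M_{\pm}$ and $\e^N_{\pm}$.

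For part~(i): if $\phi\colon(M,L^M_1)\to(N,L^N_1)$ is holomorphic then, by the Remark following Theorem~\ref{thm:H+geod} (or directly from Definition~\ref{defn:ogc} together with Proposition~\ref{prop:ogc_basic}), $\dif\!\phi$ is a Poisson morphism for the associated Poisson structures $L^M_1{}'$ and $L^N_1{}'$; but for a generalized K\"ahler manifold with $J_++J_-$ invertible the associated Poisson structure of $L_1$ is precisely (a nonzero multiple of) the holomorphic Poisson structure $\e_+$ of Remark~\ref{rem:holo_Poisson_tamed}(1), whose symplectic foliation is $\V$. Thus $\dif\!\phi$ maps $\e^M_+$ to $\e^N_+$ as real bivectors. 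Adding the hypothesis that $\phi$ is $J_{\pm}$-holomorphic upgrades this to a morphism of the holomorphic Poisson structures on $(M,J_{\pm})$ and $(N,J_{\pm})$, since a real Poisson morphism that is also holomorphic respects the $(2,0)$-parts. For the converse when $\phi$ is an immersion: a holomorphic Poisson morphism is in particular $J_{\pm}$-holomorphic, and the condition that $\dif\!\phi$ carry $\e^M_+$ to $\e^N_+$ forces, on the image, that $\dif\!\phi(\V_M)\subseteq\V_N$ and that $\dif\!\phi$ is compatible with the associated Poisson structures of $L_1$; since $\phi$ is an immersion, one recovers from this and the $J_+$-holomorphicity that $\dif\!\phi$ is generalized complex linear for $L_1$, using Proposition~\ref{prop:gcl} applied fibrewise together with the normal-form decomposition $TM=\V_M\oplus\H^+_M\oplus\H^-_M$.

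For part~(ii): now $\phi\colon(M,L^M_2)\to(N,L^N_2)$ is holomorphic and $L_2$ is (up to $B$-field) $L(TM,{\rm i}\,\ep_M)$, a genuine symplectic structure, so by Remark~4.11(1) (or directly) holomorphicity of $\phi$ for $L_2$ means $\dif\!\phi$ is a Poisson morphism $\phi_*\ep_M^{-1}=\ep_N^{-1}$ up to $B$-field, in particular $\phi$ is co-CR holomorphic for the co-CR structure associated to $L_2$, which is $T^{1,0}_+M+T^{1,0}_-M$. The extra hypothesis that $\phi$ is $J_+$-holomorphic (or $J_-$-holomorphic) then, via the formula $\e_+=\tfrac12(J_+-J_-)\ep^{-1}$ from Remark~\ref{rem:holo_Poisson_tamed}(1) and the intertwining $\dif\!\phi\circ\ep_M^{-1}=\ep_N^{-1}\circ(\dif\!\phi)^*$, lets one check that $\dif\!\phi$ carries $\e^M_+$ to $\e^N_+$: one writes $\e_+$ in terms of $\ep^{-1}$ and the eigenbundle projections $\r^{\pm}$, uses that $\phi$ being $J_+$-holomorphic means $\dif\!\phi$ commutes with the $\r^+$ projections, and that the Poisson-morphism condition handles $\ep^{-1}$; holomorphicity for $J_-$ works symmetrically using $\e_-=-\e_+$. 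The main obstacle I anticipate is bookkeeping the $B$-field transformations: holomorphicity is only "up to $B$-fields" and on a neighbourhood of a regular point of constant rank, so one must be careful that the distinguished $B$-field normalising $L_2$ to a symplectic form is compatible, under $\phi$, with the corresponding normalisation downstairs — but since the holomorphic Poisson structures $\r^{\pm}_*(L_2)$ are intrinsic (independent of the $B$-field, being push-forwards that kill the image of any closed two-form pulled back from the quotient), this subtlety only affects the intermediate identifications and not the final statement.
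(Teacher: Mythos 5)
Your part (ii) is sound and follows the paper's own route: the Poisson structure associated to $L_2$ is the tamed symplectic form $\ep$\,, the identity $J_-\ep^{-1}=-\ep^{-1}J_+^*$ shows that, once $\phi$ is $L_2$-holomorphic, $J_+$- and $J_-$-holomorphicity are equivalent, and the formula $\e_\pm=\mp\tfrac12(J_+-J_-)\ep^{-1}$ of Remark \ref{rem:holo_Poisson_tamed}(1) then transports the bivectors. The $B$-field worry you raise is indeed harmless, for the reason you give.

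There is, however, a genuine error in your argument for part (i): the claim that, when $J_++J_-$ is invertible, the Poisson structure associated to $L_1$ is a nonzero multiple of the real bivector $\e_+$ of Remark \ref{rem:holo_Poisson_tamed}(1)\,. By Remark \ref{rem:LJ}(1) and Gualtieri's formula for $\J_1$\,, the bivector associated to $L_1$ is $-\tfrac12\bigl(\o_+^{-1}-\o_-^{-1}\bigr)=\tfrac12(J_+-J_-)g^{-1}$, whereas $\e_+=-\tfrac14[J_+,J_-]g^{-1}=-\tfrac14(J_+-J_-)(J_++J_-)g^{-1}$ (using $[J_+,J_-]=(J_+-J_-)(J_++J_-)$\,). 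The two bivectors share the symplectic foliation $\V$ but differ by the non-scalar invertible factor $J_++J_-$; equivalently, one inverts $g$ where the other inverts $\ep$\,. So $L_1$-holomorphicity alone does not transport $\e_+$\,, and your ``respects the $(2,0)$-parts'' step is applied to the wrong bivector. The repair is exactly that missing factor: since $\phi$ is $J_\pm$-holomorphic, $\dif\!\phi\circ(J^M_++J^M_-)=(J^N_++J^N_-)\circ\dif\!\phi$\,, so pushing forward $\tfrac12(J_+-J_-)g^{-1}$ (which $L_1$-holomorphicity gives) and composing on the left with $J_++J_-$ yields the push-forward of $[J_+,J_-]g^{-1}$, hence of $\e_+$\,; for the converse one cancels the invertible factor $J^N_++J^N_-$. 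This computation is what the paper's appeal to Proposition \ref{prop:pfpb} together with the identification of $\r^{\pm}_*(L_2)$ in the proof of Theorem \ref{thm:holo_Poisson_tamed} accomplishes. With that correction (and a little more care in the converse, where the immersion hypothesis is used to recover the full generalized-complex-linearity of $\dif\!\phi$ fibrewise), your outline for (i) goes through.
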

\begin{proof}
Assertion (i) follows from Proposition \ref{prop:pfpb} and the proof of Theorem \ref{thm:holo_Poisson_tamed}\,.\\
\indent
To prove (ii)\,, note that if $\phi:(M,L^M_2)\to(N,L^N_2)$ is holomorphic then $\phi:(M,J^M_+)\to(N,J^N_+)$ is holomorphic
if and only if $\phi:(M,J^M_-)\to(N,J^N_-)$ is holomorphic. The proof quickly follows from Remark \ref{rem:holo_Poisson_tamed}(1)\,.
\end{proof}

\indent
If $(g,J_{\pm})$ are K\"ahler structures on $M$ then $(g,0,J_+,J_-)$ corresponds to a generalized K\"ahler structure $(L_1,L_2)$
on $M$; furthermore, if $b$ is a closed two-form on $M$ then $(g,b,J_+,J_-)$ corresponds to $\bigl((\exp b)(L_1),(\exp b)(L_2)\bigr)$\,.

\begin{exm}[cf.\ \cite{Hit-gc_CMP}\,] \label{exm:hK_gK}
Let $(M,g,I,J,K)$ be a hyper-K\"ahler manifold. Denote by $\o_I$\,, $\o_J$\,, $\o_K$ the K\"ahler forms of
$I$, $J$, $K$, respectively, and let $\ep=-(\o_J+\o_K)$\,.\\
\indent
Then $(M,\ep,J)$ is a tamed symplectic manifold. The corresponding generalized K\"ahler structure $(L_1,L_2)$
is given by $(g,b,J_+,J_-)$\,, where $b=\o_I$\,, $J_+=J$ and $J_-=K$. Also, $L_1=L\bigl(T^{\C\!}M,2\,\o_I-{\rm i}(\o_J-\o_K)\bigr)$\,,
$L_2=L\bigl(T^{\C\!}M,-{\rm i}(\o_J+\o_K)\bigr)$ and $\ep_+=-{\rm i}(\o_I-{\rm i}\,\o_J)$\,, $\ep_-=-(\o_K-{\rm i}\,\o_I)$\,.
\end{exm}

\indent
We end with a generalization of Corollaries \ref{cor:H+-} and \ref{cor:VH+}\,.

\begin{thm} \label{thm:H+-int}
Let $(M,L_1,L_2)$ be a generalized K\"ahler manifold. Then the following assertions are equivalent:\\
\indent
{\rm (i)} $\H^+\oplus\H^-$ is a holomorphic foliation with respect to $J_+$ and $J_-$\,.\\
\indent
{\rm (ii)} Locally, up to a $B$-field transformation, $(M,L_1,L_2)$ is the first product of a generalized K\"ahler manifold
for which $J_+\pm J_-$ are invertible and the second product of two K\"ahler manifolds.
\end{thm}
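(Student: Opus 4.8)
The plan is to prove the two implications separately, using the earlier structural results as building blocks. The easy direction is (ii)$\Longrightarrow$(i): if, up to a $B$-field, $(M,L_1,L_2)$ is locally the first product of a generalized K\"ahler manifold $M_0$ with $J_+\pm J_-$ invertible and a second product $M_1$ of two K\"ahler manifolds, then on the $M_0$ factor we have $\V_{M_0}=TM_0$ (since $J_+\pm J_-$ invertible forces $\H^+=\H^-=0$ there), while on the $M_1$ factor Corollary \ref{cor:H+-} (or its proof) shows $\V_{M_1}=0$, i.e.\ $TM_1=\H^+_{M_1}\oplus\H^-_{M_1}$. Hence $\H^+\oplus\H^-=TM_1$, which is an integrable distribution on the product, and $B$-field transformations do not affect the $J_{\pm}$ hence do not affect $\H^{\pm}$. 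So this direction is essentially bookkeeping once one recalls that $\H^{\pm}$ depend only on the bi-Hermitian data $(g,b,J_+,J_-)$ through $J_{\pm}$.

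For (i)$\Longrightarrow$(ii), I would first note that if $\H^+\oplus\H^-=\V^{\perp}$ is an integrable distribution of locally constant rank, then Proposition \ref{prop:regular} gives that $L_1$ and $L_2$ are regular, so $\H^+$, $\H^-$ and $\V$ are all distributions, all $J_{\pm}$-invariant, with $\V$ integrable as well (being the $g$-orthogonal complement of an integrable, hence by the generalized K\"ahler compatibility a $\nabla^{\pm}$-parallel, distribution — here one uses that $\H^+\oplus\H^-$ is $\nabla^{\pm}$-parallel, which should follow from integrability of $E_1\cap\overline{E_1}=\bigl(\H^+\bigr)^{\perp}$ and $E_2\cap\overline{E_2}=\bigl(\H^-\bigr)^{\perp}$ together with the parallelism of $J_{\pm}$). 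Next I would apply Theorem \ref{thm:H+geod} to conclude that $\H^+$ is a geodesic holomorphic foliation with K\"ahler leaves, and the analogous statement for $\H^-$; since $\H^+$ and $\H^-$ are orthogonal and their sum is integrable and geodesic, the distribution $\V$ is also geodesic. This gives a local metric splitting $M\cong M_1\times M_0$ where $TM_1=\H^+\oplus\H^-$ and $TM_0=\V$, with $(g,J_+,J_-)$ respecting the splitting.

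The main work — and the expected main obstacle — is to show that the two-form $b$ splits correspondingly up to an exact correction, i.e.\ that after a $B$-field transformation $b$ (equivalently $h=\dif b$) decomposes as $h_1+h_0$ with $h_1$, $h_0$ living on the respective factors, so that $(M,L_1,L_2)$ genuinely becomes the \emph{first product} of the two generalized K\"ahler pieces. This is exactly the kind of computation carried out in the proof of Corollary \ref{cor:H+-} (showing $\dif b=0$ on the $\V=0$ piece via Lemma \ref{lem:Watson} and the integrability of $E_2$) and in Corollary \ref{cor:VH+} (splitting $\ep$ as $\e+\ep'$). Concretely, I would show that $h(X,Y,Z)=0$ whenever one argument is in $\V$ and the other two in $\H^+\oplus\H^-$, using Lemma \ref{lem:Watson} applied to $\H=\H^+$ and to $\H=\H^-$ with $J=J_{\pm}$, $\nabla=\nabla^{\pm}$ — the second fundamental forms vanish by the geodesic property and the integrability tensors take values appropriately, forcing the mixed components of $h$ to vanish; then a routine $B$-field adjustment (subtracting a pullback of a suitable two-form, as in Corollary \ref{cor:blank_up_to_B}) kills the remaining obstruction. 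On the $M_1=\H^+\oplus\H^-$ factor, once $h$ restricts there, Corollary \ref{cor:H+-} identifies it (up to a unique $B$-field) with the second product of two K\"ahler manifolds; on the $M_0=\V$ factor, $J_+\pm J_-$ are invertible by construction of $\V$. Finally, piecing these together via Theorem \ref{thm:gK_tamed} on the $M_0$ factor to recognise it as coming from a tamed symplectic structure completes the identification in (ii). The delicate point throughout is tracking that all the splittings are compatible and that the $B$-field needed is the composite of the (unique) one from Corollary \ref{cor:H+-} on $M_1$ and a closed two-form correction on $M_0$; I would handle this by working on adapted local coordinates provided by the Riemannian submersions of Remark \ref{rem:Riem_subm_descend}(1).
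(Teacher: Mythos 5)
Your overall architecture --- split $TM$ orthogonally into $\V$ and $\H^+\oplus\H^-$, show both are geodesic foliations, split $b$ up to an exact correction, then invoke Corollary \ref{cor:H+-} on the $\H^+\oplus\H^-$ factor and the tamed--symplectic description on the $\V$ factor --- is the paper's, and your treatment of (ii)$\Longrightarrow$(i) is fine. But there is a genuine gap at the crux of (i)$\Longrightarrow$(ii): every piece of information you extract from Lemma \ref{lem:Watson} comes from applying it to $\H=\H^+$ and to $\H=\H^-$ separately, and those applications only see pairs of horizontal arguments lying in the \emph{same} summand. They therefore say nothing about the mixed quantities $B^{\H^+\oplus\H^-}(X_+,X_-)$ and $(\dif\!b)(V,X_+,X_-)$ with $X_{\pm}\in\H^{\pm}$ and $V\in\V$, which are precisely what must vanish for $\H^+\oplus\H^-$ to be geodesic and for $\dif\!b$ to be basic. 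Geodesicity of $\H^+$ and of $\H^-$ individually does not imply geodesicity of their sum, so your sentence ``their sum is integrable and geodesic'' asserts the hard part without proof, and the subsequent claim that $\V$ is then geodesic (which in any case is not a formal consequence of its orthogonal complement being geodesic) inherits the gap.

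The missing idea is to apply Lemma \ref{lem:Watson} to $\H=\H^+\oplus\H^-$ itself, once with $(J_+,\nabla^+)$ and once with $(J_-,\nabla^-)$. Comparing the two resulting identities for arguments $X_+\in\H^+$, $X_-\in\H^-$, $V\in\V$ yields
$$g\bigl(I^{\H^+\oplus\H^-}(X_+,X_-),(J_+-J_-)(V)\bigr)=2\,(\dif\!b)(V,J_+X_+,X_-)\;,$$
so hypothesis (i) (vanishing of the integrability tensor) kills the mixed components of $\dif\!b$; substituting back into either identity then gives $B^{\H^+\oplus\H^-}(X_+,X_-)=0$. Only after this step do the computations of Corollaries \ref{cor:H+-} and \ref{cor:VH+} (which handle the components of $\dif\!b$ with all arguments in $\H^+\oplus\H^-$, or with one in $\H^{\pm}$ and two in $\V\oplus\H^{\pm}$), together with $\dif(\dif\!b)=0$, show that $\dif\!b$ is basic with respect to $\H^+\oplus\H^-$ and that both $\V$ and $\H^+\oplus\H^-$ are geodesic; from there your concluding assembly goes through. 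A minor point: your justification that $\V$ is integrable via ``$\nabla^{\pm}$-parallelism'' is both unconvincing and unnecessary, since $\V$ is the intersection of the integrable distributions $E_1\cap\overline{E_1}$ and $E_2\cap\overline{E_2}$.
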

\begin{proof}
By applying Lemma \ref{lem:Watson} to $\H=\H^+\oplus\H^-$ twice, with respect to $\nabla^+$ and $\nabla^-$, we obtain
\begin{equation} \label{e:H+-int}
\begin{split}
2g\bigl(B^{\H^+\oplus\H^-\!}(J_+X_+,X_-)&,V\bigr)+g\bigl(I^{\H^+\oplus\H^-\!}(X_+,X_-),J_+V\bigr)\\
&=(\dif\!b)(V,J_+X_+,X_-)+(\dif\!b)(V,X_+,J_+X_-)\;,\\
2g\bigl(B^{\H^+\oplus\H^-\!}(J_+X_+,X_-)&,V\bigr)+g\bigl(I^{\H^+\oplus\H^-\!}(X_+,X_-),J_-V\bigr)\\
&=-(\dif\!b)(V,J_+X_+,X_-)+(\dif\!b)(V,X_+,J_+X_-)\;,
\end{split}
\end{equation}
for any $X_{\pm}\in\H^{\pm}$ and $V\in\V$. Consequently, we, also, have
\begin{equation} \label{e:H+-int_}
g\bigr(I^{\H^+\oplus\H^-\!}(X_+,X_-),(J_+-J_-)(V)\bigl)=2\dif\!b(V,J_+X_+,X_-)\;,
\end{equation}
for any $X_{\pm}\in\H^{\pm}$ and $V\in\V$.\\
\indent
Suppose that (i) holds. Then, by \eqref{e:H+-int_}\,, we have $\dif\!b(V,X_+,X_-)=0$\,, for any $X_{\pm}\in\H^{\pm}$ and $V\in\V$.
Moreover, from Corollaries \ref{cor:H+-} and \ref{cor:VH+} it follows that $\dif\!b(X,Y,Z)=0$ if
$X,Y,Z\in\H^+\oplus\H^-$ or $X\in\H^{\pm}$ and $Y,Z\in\V\oplus\H^{\pm}$.\\
\indent
As $\dif(\dif\!b)=0$\,, this shows that $\dif\!b$ is basic with respect to $\H^+\oplus\H^-$. Hence, locally, there exists
a two-form $b'$, basic with respect to $\H^+\oplus\H^-$, such that $\dif\!b=\dif\!b'$.\\
\indent
Furthermore, from \eqref{e:H+-int} and \eqref{e:H+-int_} we obtain $B^{\H^+\oplus\H^-\!}(X_+,X_-)=0$\,, for any \mbox{$X_{\pm}\in\H^{\pm}$.}
Together with Theorem \ref{thm:H+geod} and Corollary \ref{cor:VH+}\,, this shows that $\V$ and $\H^+\oplus\H^-$ are geodesic foliations on $(M,g)$\,.\\
\indent
Thus, we have proved that $(M,L_1,L_2)$ is the first product of a generalized K\"ahler manifold with $\H^+=0=\H^-$
and a generalized K\"ahler manifold with $\V=0$\,. Hence, by Corollary \ref{cor:H+-}\,, assertion (ii) holds.\\
\indent
The implication (ii)$\Longrightarrow$(i) is trivial.
\end{proof}

\noindent
{\bf Acknowledgements.} We are grateful to the referee for suggesting the references
\cite{Cav-ddbar}\,, \cite{Che-d_and_dbar}\,, \cite{CheStiXu}\,, and \cite{LGeStiXu}\,,
to Henrique~Bursztyn for bringing to our attention \cite{BurRad} and \cite{BurWei-2005}\,,
and to Marco~Gualtieri for informing us about \cite{Gua-Pbranes}\,.

\end{document}